\newenvironment{proof}[1][Proof]{\textbf{#1.} }{\ \rule{0.5em}{0.5em}}
\newtheorem{remark}{\textbf{Remark}}[section]
\newtheorem{proposition}{\textbf{Proposition}}[section]
\newtheorem{lemma}{\textbf{Lemma}}[section]
\newtheorem{theorem}{\textbf{Theorem}}[section]
\numberwithin{equation}{section}
\def\dd{{\rm d}}
\newcommand{\ov}[1]{\overline{#1}}
\def\weight(#1,#2){c_{#1,#2}}
\def\db{\bar{d}}
\def\B{\mathcal{B}}
\def\C{\mathcal{C}}
\def\F{\mathcal{F}}
\def\G{\mathcal{G}}
\def\K{\mathcal{K}}
\def\P{\mathcal{P}}
\def\SS{\mathcal{S}}
\def\eps{\varepsilon}
\def\half{\mbox{$\frac{1}{2}$}}
\def\1B{{\bf  1}}
\newcommand{\NN}{\mathbb{N}}
\newcommand{\ZZ}{\mathbb{Z}}
\newcommand{\RR}{\mathbb{R}}
\def\EE{\mathbb{E}}
\def\PP{\mathbb{P}}
\newcommand\be{\begin{equation}}
\newcommand\ee{\end{equation}}
\newcommand\ba{\begin{array}}
\newcommand\ea{\end{array}}
\newcommand{\bean}{\begin{eqnarray*}}
\newcommand{\eean}{\end{eqnarray*}}
\def\ds{\displaystyle}
\title{A  Semi-Lagrangian scheme for a degenerate second order Mean Field Game system}
\author{ E. Carlini \thanks{Dipartimento di Matematica, Sapienza Universit\`a di Roma (carlini@mat.uniroma1.it)  +39 06 49913214.} \and F. J. Silva  \thanks{XLIM - DMI 
UMR CNRS 7252
Facult\'e des Sciences et Techniques,
Universit\'e de Limoges  (francisco.silva@unilim.fr) +33 5 87506787. The support 
 of the European Union under the ``7th Framework Program FP7-PEOPLE-2010-ITN Grant agreement number 264735-SADCO'' is gratefully acknowledged.}}
\begin{document}
\maketitle
\begin{abstract} In this paper we study a fully discrete Semi-Lagrangian approximation of a second order Mean Field Game system, which can be degenerate. We prove that the resulting scheme is well posed and, if the state dimension is equals to one, we prove a convergence result. Some numerical simulations are provided, evidencing  the convergence of the approximation and also the difference between the numerical results for the degenerate and non-degenerate cases. 
\end{abstract}

{\bf{Keywords:}} Mean field games, Degenerate second order system, Semi-Lagrangian schemes, Nu-merical methods.

{\bf {MSC 2000}:} Primary: 65M12, 91A13; Secondary:  65M25, 91A23, 49J15, 35F21, 35Q84 .
\thispagestyle{plain}
\section{Introduction}
Mean Field Games (MFG) systems were introduced independently by \cite{HuangCainesMalhame03,HuangCainesMalhame06} and \cite{LasryLions06i,LasryLions06ii,LasryLions07} in order to model dynamic games with a large number of {\it indistinguishable small players}. In the model proposed in  \cite{LasryLions06ii,LasryLions07} the asymptotic equilibrium is described by means of a system of two Partial Differential Equations (PDEs). The first equation, together with a final condition, is a Hamilton-Jacobi-Bellman (HJB) equation describing the value function of an {\it average player} whose cost function depends on the distribution $m$ of the entire population. The second equation is a Fokker-Planck equation which, together with an initial distribution $m_0$,  describes the fact that $m$ evolves following  the optimal dynamics of the average player. We refer the reader to the original papers  \cite{HuangCainesMalhame03,HuangCainesMalhame06,LasryLions06i,LasryLions06ii,LasryLions07} and the surveys \cite{Cardialaguet10,gomessurvey13} for a detailed description of the problem and to \cite{GLL10} for some interesting applications. 

Numerical methods to solve MFGs problems have been addressed by several authors. Let us mention the papers  \cite{AchdouCapuzzo10, aime10, gueant10,AchdouCapuzzoCamilli12,CS13} where the second order system (i.e. when the underlying dynamics is stochastic) is treated and to \cite{camsilva12,CS12} for the first order case (i.e. when the underlying dynamics is deterministic).

In this article we consider the following second order {\it possibly degenerated} MFG system 
\small
\be\ba{rcl}\label{MFGesto} 
-\partial_{t} v  -  \half \mbox{tr}\left( \sigma(t) \sigma(t)^{\top} D^{2} v \right)  + \half |D v|^{2}   &=& F(x, m(t)) \;  \;    \hbox{in } \RR^{d}\times ]0,T[, \\[6pt]
\partial_{t} m -\half  \mbox{tr}\left( \sigma(t) \sigma(t)^{\top} D^{2} v \right)   -\mbox{div} \big( D v m  \big)  &=&0 \;  \; \; \hbox{in } \RR^{d}\times ]0,T[, \\[6pt]
v(x,T)= G(x, m(T)) \; \;   \mbox{for } x \in \RR^{d}, &\,& \; \; m(\cdot,0)=m_0(\cdot) \in \P_{1}(\RR^{d}),
\ea\ee \normalsize
where $\P_{1}(\RR^{d})$ is the set of probability measures over $\RR^{d}$ having finite first order moment,  $\sigma: [0,T] \to \RR^{d\times r}$ and $F$, $G : \RR^{d} \times \P_{1} \to \RR$ are two functions satisfying some assumptions described in Section \ref{preliminares}. Up to the best of our knowledge, for this system,  existence and uniqueness results have  not been established yet (except for the case $r=d$, $\sigma:= \hat{\sigma} \mathbb{I}_{d\times d}$, $\hat{\sigma}\in \RR$).

The  aim of this work is to provide a fully-discrete Semi-Lagrangian discretization of \eqref{MFGesto}, to study  the main properties of the scheme and to establish a convergence result for the solutions of the discrete system. The line of argument is similar to the one analyzed in  \cite{CS12}.  Given a continuous measure-valued   application  $\mu(\cdot)$ and a space-time step $(\rho,h)$ we discretize the HJB
\be\label{eqejemplo}
\ba{rcl}
-\partial_{t} v  -     \half \mbox{tr}\left( \sigma(t) \sigma(t)^{\top} D^{2} v \right)  + \half | D v|^{2}   &=& F(x, \mu(t)) \;  \;    \hbox{in } Q, \\[6pt]
v(x,T)&=& G(x, \mu(T)) \; \;   \mbox{for } x \in \RR^d,
\ea\ee
using a fully-discrete Semi-Lagrangian scheme in the spirit of \cite{CamFal95,DebrabantJakobsen13}. We then regularize the solution of the scheme by convolution with a mollifier $\phi_{\eps}$ ($\eps>0$). The resulting function is called $v^{\eps}_{\rho,h}[\mu]$. In order to discretize the second equation we propose a natural extension to the second order case  of the scheme in \cite{CS12} designed for the first order  equation (i.e. with $\sigma=0$). The solution of the scheme is denoted by $m^{\eps}_{\rho,h}[\mu](\cdot)$.  The fully-discretization of problem \eqref{MFGesto} is thus to find $\mu(\cdot)$ such that  $m^{\eps}_{\rho,h}[\mu](\cdot)=\mu(\cdot)$. The existence of a solution of the discrete problem is established in Theorem \ref{existenciamfgesto}  by standard arguments based on the Brouwer fixed point Theorem. The convergence of the solutions of the discrete system to a solution of  \eqref{MFGesto} is much more delicate. As a matter of fact, as in \cite{CS12} we   establish in Theorem \ref{resultadoprincipalfullydiscrete} the convergence result only when the state dimension $d$ is equals to one.  Under suitable conditions over the discretization parameters, the proof is based on three crucial results. The first one is a relative compactness property for  $m^{\eps}_{\rho,h}[\mu](\cdot)$, which can be obtained as a consequence of a Markov chain interpretation of the scheme.  The second result is  the discrete semiconcavity of $v^{\eps}_{\rho,h}[\mu]$ (see e.g. \cite{AchdouCamilliCorrias11}), which implies a.e. convergence of $Dv^{\eps}_{\rho,h}[\mu]$ to $Dv[\mu]$ (where $v[\mu]$ is the unique viscosity solution of \eqref{eqejemplo}). The third result are $L^{\infty}$-bounds for the density of  $m^{\eps}_{\rho,h}[\mu](\cdot)$, where the one dimensional assumption plays an important role. We remark that our convergence result proves the existence of a solution of \eqref{MFGesto}  when $d=1$. Moreover, our results are valid for more general Hamiltionians, as the ones considered in \cite{AchdouCamilliCorrias11} (see Remark \ref{extensionahamiltonianosmasgenerales}{\rm(ii)}).  However, since the proofs are already rather technical, as in \cite{CS12},  we preferred to present the details for the quadratic Hamiltonian case. 

The paper is organized as follows.  In Section \ref{preliminares} we fix some notations and we state our main assumptions. In Section \ref{mamdmoootroototootjkask} we provide the natural Semi-Lagrangian discretization for the HJB equation and we prove its main properties. In Section \ref{mnnwnrnwaa} we propose a scheme for the Fokker-Planck equation and we prove that the associated solutions, as functions of the discretization parameters,  form a relatively compact set. In Section \ref{qmeqmenresultados} we prove our main results, the existence of a solution of the discrete system and, if $d=r=1$, the convergence to a solution of \eqref{MFGesto}. Finally,  in Section \ref{numericaltests} we present some numerical simulations showing the difference between the numerical approximation between  degenerate and non-degenerate systems.
%
\section{Preliminaries}\label{preliminares}
Let us first fix some notations. 
 For $x\in \RR^d$ we will denote by $|x|= \sqrt{x^{\top}x}$ for the usual Euclidean norm.  In the entire article $c>0$ will be a generic constant, which can change from line to line. For  $u\in  \RR^{d}\times [0,T]\to \RR$  we will denote by $\partial_{t} u$  for the partial derivative of $u$ (if it exists) w.r.t. the time variable and   by $D u$, $D^{2}u$  the gradient and Hessian of $u$ (if they exist) w.r.t. the space variables. We denote by  $\P(\RR^{d})$   the set of Borel probability measures $\mu$  over $\RR^{d}$  and, for $p\in [1, \infty[$, we say that $\mu \in \P_{p}(\RR^{d})$  if
$$ \int_{\RR^{d}} |x|^{p} \dd \mu(x) < + \infty.$$
The distance $d_{p} : \P_{p}(\RR^{d})\times \P_{p}(\RR^{d})\to \RR$ is defined as \footnotesize
$$ d_{p}(\mu_1, \mu_2):= \inf_{\gamma \in \P(\RR^{d}\times \RR^{d})}\left\{ \left[\int_{\RR^{d}\times \RR^{d}}|x-y|^{p} \dd \gamma(x,y)\right]^{\frac{1}{p}} \; ; \;   \gamma(A\times \RR^{d})= \mu_1(A), \; \; \gamma( \RR^{d}\times B)= \mu_2(B)  \; \; \forall \; A, B \in \B(\RR^{d})\right\}.$$ \normalsize
It is well-known (see e.g.  \cite[Theorem 1.14]{Villani03}) that $d_1$,  can be expressed in the following dual form 
\be\label{formadualkantorovic}  d_{1}(\mu_{1}, \mu_{2}) = \sup_{\phi}\left\{ \int_{\RR^{d}} \phi(x) \dd [\mu_{1}-\mu_{2}](x) \ ; \ \phi \; \mbox{is 1-Lipschitz}\right\}.
\ee
Let us recall the following useful result (see e.g. \cite[Chapter 7]{Ambrosiogiglisav} and \cite[Lemma 5.7]{Cardialaguet10}):
\begin{lemma}\label{nwrnnnndbbruua}
Let $q> p >0$ and $\K\subseteq \P_{p}(\RR^{d})$ be such that 
$$ \sup_{\mu \in \K} \int_{\RR^{d}} |x|^{q} \dd \mu(x) < \infty.$$
Then  $\K$ is a relatively compact set in $\P_{p}(\RR^{d})$.
\end{lemma}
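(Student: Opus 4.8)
The plan is to reduce the statement to a classical tightness-plus-metric-completeness argument in the Wasserstein space. First I would recall the two facts that make $\P_p(\RR^d)$ with the distance $d_p$ a complete, separable metric space, and that relative compactness in this space is equivalent to the conjunction of (a) tightness of $\K$ in the weak-$*$ topology of measures, and (b) uniform integrability of the $p$-th moments, i.e. $\lim_{R\to\infty}\sup_{\mu\in\K}\int_{|x|>R}|x|^p\,\dd\mu(x)=0$. This characterization is exactly \cite[Chapter 7]{Ambrosiogiglisav}, so I would simply invoke it and then verify (a) and (b) from the single uniform bound $M:=\sup_{\mu\in\K}\int_{\RR^d}|x|^q\,\dd\mu(x)<\infty$ with $q>p>0$.

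For tightness, note that by Chebyshev's inequality, for any $R>0$ and any $\mu\in\K$,
\[
\mu\bigl(\{x : |x|>R\}\bigr) \le \frac{1}{R^q}\int_{\RR^d}|x|^q\,\dd\mu(x) \le \frac{M}{R^q},
\]
which tends to $0$ uniformly in $\mu\in\K$ as $R\to\infty$; hence $\K$ is tight. For the uniform integrability of the $p$-th moments, I would estimate, for $R>0$ and $\mu\in\K$,
\[
\int_{\{|x|>R\}}|x|^p\,\dd\mu(x) = \int_{\{|x|>R\}}|x|^{p-q}|x|^q\,\dd\mu(x) \le R^{p-q}\int_{\RR^d}|x|^q\,\dd\mu(x) \le M\,R^{p-q},
\]
using $|x|>R$ and $p-q<0$; since $R^{p-q}\to 0$ as $R\to\infty$, this bound is uniform in $\mu\in\K$ and goes to zero, giving the required uniform integrability. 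Combining (a) and (b) with the cited characterization yields that $\K$ is relatively compact in $\P_p(\RR^d)$.

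There is essentially no serious obstacle here: the only mildly delicate point is making sure one uses the correct notion of relative compactness in $(\P_p(\RR^d),d_p)$ — namely that $d_p$-convergence is strictly stronger than weak-$*$ convergence, so one genuinely needs the moment uniform integrability (b) and not merely tightness (a). Both follow from the same Chebyshev-type splitting, so once the characterization from \cite{Ambrosiogiglisav} is granted the argument is a two-line computation; alternatively one may quote \cite[Lemma 5.7]{Cardialaguet10} directly, which states precisely this implication.
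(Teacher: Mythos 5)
Your argument is correct, and it is essentially the paper's approach: the paper gives no proof of this lemma beyond citing \cite[Chapter 7]{Ambrosiogiglisav} and \cite[Lemma 5.7]{Cardialaguet10}, and your Chebyshev-based verification of tightness and uniform integrability of the $p$-th moments is exactly the standard argument behind those references.
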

We assume now the  following assumptions on the data of \eqref{MFGesto}:
\medskip\\
{\bf(A1)} We suppose that: \smallskip\\
{\rm{\bf(i)}} $F$ and $G$ are  uniformly bounded   over $\RR^{d}\times \P_{1}$ and for every $m\in \P_{1}(\RR^{d})$, the functions $F(\cdot, m)$, $G(\cdot,m)$ are $C^{2}$ and their first and second derivatives are bounded in $\RR^{d}$, uniformly with respect to $m$, i.e. $\exists \;   c>0$ such that 
 $$  \|F(\cdot, m)\|_{C^{2}} +  \|G(\cdot, m)\|_{C^{2}} \leq  c \hspace{0.5cm} \forall \; m\in \P_{1}(\RR^{d}),$$
 where for $\phi: \RR^{d} \to \RR$ we set $\| \phi\|_{C^{2}}:= \sup_{x\in \RR^{d}, \; |\alpha|\leq 2} \left|D^{\alpha} \phi(x)\right|$.\smallskip\\
 {\rm {\bf(ii)}} Denoting by $\sigma_\ell: [0,T] \to \RR^{d}$ ($\ell=1,\hdots, r$) the $\ell$ column vector of the matrix $\sigma$, we assume that $\sigma_{\ell}$ is continuous. \smallskip\\
{\rm{\bf(iii)}}  The measure $m_{0}$ is absolutely continuous, with density still denoted as $m_0$. Moreover, we suppose that $m_{0}$ is essentially bounded and has compact support, i.e. there exists $c>0$ such that    $\mbox{supp}(m_{0}) \subseteq B(0, c)$, where $B(0, c):= \{ x\in \RR^{d} \; ; \; |x| < c\}$.
\smallskip

We say that $(v,m)$ is a solution of \eqref{MFGesto}  if the first equation  is satisfied  in the viscosity sense (see e.g. \cite{CraIshLio92,FleSon92}), while the second one  is satisfied in the distributional sense (see e.g \cite{Figalli08}), i.e.   for every $\phi \in \C_{c}^{\infty}\left(\RR^{d} \right)$ and $t\in [0,T]$
$$ \int_{\RR} \phi(x) \dd m(t)(x)=  \int_{\RR} \phi(x) \dd m_{0}(x)   +\int_{0}^{t}\int_{\RR^{d}} \left[ \half \mbox{Tr}(\sigma \sigma^{\top}(s) D^{2}\phi(x)) - \langle D \phi(x), Dv(x,s)\rangle\right] \dd m (s)(x)\dd s.$$

Our aim in this work is to provide a discretization scheme for \eqref{MFGesto}.  Given $h,\rho>0$, let us define a space grid  $\mathcal{G}_{\rho}$ and  a time-space grid $\mathcal{G}_{\rho,h}$ as
$$\mathcal{G}_{\rho}  :=\{ x_i=i \rho,\; i\in \ZZ^{d} \}, \hspace{0.8cm} \mathcal{G}_{\rho,h}:= \mathcal{G}_{\rho}\times \{ t_k \}_{k=0}^{N},$$ where $t_k=k h$ ($k=0,\hdots, N$) and $t_{N}=Nh=T$. 
We call   $B(\mathcal{G}_{\rho})$ and   $B(\mathcal{G}_{\rho,h})$    the spaces of bounded functions defined respectively  on $\mathcal{G}_{\rho}$  and  $\mathcal{G}_{\rho,h}$. For  $f\in  B(\mathcal{G}_{\rho})$ and $g\in B(\mathcal{G}_{\rho,h})$ we set
$ f_{i}:= f(x_{i}),\, g_{i,k} := g(x_{i}, t_{k})$. Given a regular triangulation of $\RR^{d}$ with   vertices  belonging to $\mathcal{G}_{\rho}$, we set $\beta_{i}(x)$ for the barycentric coordinate of $x$ relative to $x_i$ in the triangulation.  Clearly $\beta_i(x)$  is a piecewise affine function with   compact support, satisfying  $0\leq \beta_i \leq 1$, $\beta_i(x_j)=\delta_{i j}$ for all $x_j \in \mathcal{G}_{\rho}$   (the Kronecker symbol) and $\sum_{i\in\ZZ^{d} }\beta_i(x)=1$ for all $x\in \RR^{d}$. We consider the following  linear interpolation operator  
\be\label{definterpolation}
I[f](\cdot):=\sum_{i\in\ZZ^{d} }f_i\beta_i(\cdot) \hspace{0.2cm} \mbox{for } f \in B(\mathcal{G}_{\rho}).
\ee

We recall  two basic results about the interpolation operator  $I$ (see e.g. \cite{Ciarlet,quartesaccosaleri07}). Given $\phi \in C_{b}(\RR^{d})$ (the space of bounded continuous functions on $\RR^{d}$), let us define $\hat{\phi} \in B(\mathcal{G}_{\rho})$ by $\hat{\phi}_i:=\phi(x_i)$ for all $i \in \ZZ^{d}$. 
 Suppose that $\phi: \RR^{d} \to \RR$ is Lipschitz with constant $L$. Then,  
\be\label{lipchitzianidad} I[\hat{\phi}] \; \; \; \mbox{is Lipschitz with constant $\sqrt{d} L$}.\ee 
On the other hand,
if $\phi \in \C^{2}(\RR^{d})$, with bounded second derivatives, then there exists $c>0$ such that 
\begin{equation}\label{www2}
\sup_{x\in \RR^{d}}| I[\hat{\phi}](x)-\phi(x)|=c \rho^2.
\end{equation}
\section{A fully discrete semi-Lagrangian scheme for the Hamilton-Jacobi Bellman equation}\label{mamdmoootroototootjkask}
Given $\mu \in C([0,T]; \P_1(\RR^{d}))$,  let us consider the equation  
\be\label{wewewssssaa}
\ba{rcl}
-\partial_{t} v  -     \half \mbox{tr}\left( \sigma(t) \sigma(t)^{\top} D^{2} v \right)  + \half | D v|^{2}   &=& F(x, \mu(t)) \;  \;    \hbox{in } \RR^{d} \times ]0,T[, \\[6pt]
v(x,T)&=& G(x, \mu(T)) \; \;   \mbox{for } x \in \RR^d.
\ea\ee
We discuss now a probabilistic interpretation  of  \eqref{wewewssssaa}.  
Consider a probability space $(\Omega, \F, \PP)$, a filtration $\{\F_{t} \; ; \; t \in [0,T] \} $ and  a Brownian motion $W(\cdot)$  adapted to $\mathbb{F}:=\{\F_{s}^{t}\}_{s\in [t,T]}$. Define the space 
$$ L^{2,2}_{\mathbb{F}}:= \{ v \in L^{2}( \Omega \times [0,T]; \PP \otimes \dd t); \; \; v \; \; \mbox{is progressively measurable w.r.t. $\mathbb{F}$}\},$$
where $\dd t$ is the Lebesgue measure in $[0,T]$. For every $\alpha \in L^{2,2}_{\mathbb{F}}$,  set 
$$X^{x,t}[\alpha](s)= x-\int_{t}^{s}  \alpha(r) \dd r + \int_{t}^{s} \sigma(r) \dd W(r) \hspace{0.5cm} \forall \; s\in [t,T].$$
 Then, setting 
 \be\label{madmnnrrrpspsps}  v[\mu](x,t):= \inf_{\alpha \in L^{2,2}_{\mathbb{F}}}  \EE\left( \int_{t}^{T}\left[  \half |\alpha(s)|^{2}+ F(X^{x,t}[\alpha](s), \mu(s))\right] \dd s + G(X^{x,t}[\alpha](T), \mu(T))\right), \ee 
under {\bf(A1)},  classical  arguments (see \cite[Proposition 3.1 and Proposition 4.5]{YongZhou}) imply the existence of $  c>0$  such that  \small
\begin{eqnarray}\label{qmemnndnnnn}  \left|v[\mu](x, t)-v[\mu](x', t')\right| \leq  c\left[ |x-x'| + (1+ |x|\vee |x'|)\sqrt{|t'-t|}\right] \hspace{0.5cm} \forall \; x, x' \in \RR^{d}, \; \; t, t'\in [0,T],\\[4pt]
\label{qmewemnndnnnnq}  v[\mu](x+x', t)- 2v[\mu](x, t)+ v[\mu](x-x', t) \leq  c |x'|^{2}  \hspace{0.5cm} \forall \; x, x' \in \RR^{d}, \;  0\leq t \leq T.\end{eqnarray} \normalsize 
 Moreover,   by
 the continuity property implied by \eqref{qmemnndnnnn}, we can write directly the following dynamic programing principle for $v[\mu](\cdot, \cdot)$ (see e.g. \cite{doi:10.1137/090752328}):
\be\label{ecppd} v[\mu](x,t) = \inf_{\alpha \in L^{2,2}_{\mathbb{F}}}  \EE\left( \int_{t}^{t+h}\left[   \half |\alpha(s)|^{2}+ F(X^{x,t}[\alpha](s), \mu(s))\right] \dd s + v( X^{x,t}[\alpha](t+h), t+h) \right),
\ee
for all $h \in [0,T-t]$. Using \eqref{ecppd}  it is shown (see e.g. \cite[Theorem 3.1]{doi:10.1137/S0363012904440897}) that $v[\mu](x,t)$ is the unique viscosity solution of \eqref{wewewssssaa}.

  Given $\rho$, $h>0$ and $N$ such that $Nh=T$, expression  \eqref{ecppd} naturally induces the following scheme to solve \eqref{wewewssssaa}
\begin{equation}\label{dwoewoeoweaap}
\begin{cases}
  v_{i,k}=\hat{S}_{\rho, h}[\mu](v_{\cdot,k+1},i,k)    & \mbox{for all } i \in \mathcal{G}_{\rho},   \;  k=0, \hdots, N-1,\\
   v_{i,N}= G(x_{i}, \mu(t_N)), &  \mbox{for all } i \in \mathcal{G}_{\rho},
   \end{cases}
\end{equation}
where $\hat{S}_{\rho,h}[\mu]: B(\mathcal{G}_{\rho})\times \ZZ^{d} \times \{0,\hdots, N-1\} \to \RR$ is defined as

\be\label{eeqqqpapapwwwaappaa}\footnotesize
\ba{ll}
\hat{S}_{\rho,h}[\mu](f,i,k):=&\inf_{\alpha\in \RR^{d}} \left[\frac{1}{2r} \sum_{\ell=1}^{r} \left(I [f](x_{i}-h\alpha+\sqrt{h r}\sigma_{\ell}(t_{k}))+  I[f]( x_{i}-h\alpha-\sqrt{h r}\sigma_{\ell}(t_{k}))\right)\right. \\[6pt] 
 \; &\hspace{1.2cm}\left. +   \half h|\alpha|^{2}+hF(x_{i}, \mu(t_{k})) \right].\ea \ee
\normalsize

This scheme has been proposed in \cite{CamFal95} for a stationary second order possibly degenerate Hamilton-Jacobi-Bellman equation, corresponding to an infinite horizon stochastic optimal control problem. We now prove, in our evolutive framework, some basic properties of  $\hat{S}_{\rho,h}[\mu]$.
\begin{proposition}\label{propiedadesbasicasesquema1} The following assertions hold true: \\
{\rm(i)}  Suppose that $I[f]$ is Lipchitz with constant $L>0$. Then, there exists a compact set $K_{L}\subseteq \RR^{d}$ (whose diameter depends only on $L$)   such that the infima in the  r.h.s. of \eqref{eeqqqpapapwwwaappaa}  is attained in  the interior of $K_L$. \\
{\rm(ii)} For all $v, w \in  B(\mathcal{G}_{\rho})$ with $v\leq w$, we have that
$$ \hat{S}_{\rho,h}[\mu](v,i,k) \leq \hat{S}_{\rho,h}[\mu](w,i,k) \; \;  \mbox{for all } i \in \mathcal{G}_{\rho},   \;  k=0, \hdots, N-1.$$
{\rm(iii)} For every $c \in \RR$ and  $w \in  B(\mathcal{G}_{\rho})$ we have 
$$ \hat{S}_{\rho,h}[\mu](w+c,i,k)= \hat{S}_{\rho,h}[\mu](w,i,k)+c,  \; \;  \mbox{for all } i \in \mathcal{G}_{\rho},   \;  k=0, \hdots, N-1.$$
{\rm(iv)} Let  $(\rho_{n}, h_{n})\to 0$ {\rm(}as $n \uparrow \infty${\rm)} with $\rho_n^2=o({h_n})$ and consider a sequence of grid points $(x_{i_{n}}, t_{k_n}) \to (x,t)$ and a sequence $\mu_{n} \in C([0,T]; \P_{1}(\RR^{d}))$ such that $\mu_{n}\to \mu$. Then, for every 
$\phi \in  C_{c}^{\infty}\left(\RR^d  \times [0,T)\right)$, we have\footnotesize
$$
\lim_{n\to \infty}  \frac{1}{h_n} \left[\phi(x_{i_n},t_{k_n})-\hat{S}_{\rho_{n},h_{n}}[\mu_{n}](\phi_{k_{n+1}},i_n,k_n)\right]  
=-\partial_t \phi(x,t)-\half{\mbox{{\rm tr}}}\left( \sigma(t)\sigma(t)^{\top}D^2\phi(x,t)\right) +\half |D v|^{2}-F(x,\mu(t)),$$ \normalsize
where $\phi_{k}=\{\phi(x_i,t_k)\}_{i\in\ZZ ^d}$.
\end{proposition}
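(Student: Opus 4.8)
The plan is to dispatch (i)--(iii) by direct manipulation of \eqref{eeqqqpapapwwwaappaa} and to concentrate the effort on the consistency statement (iv). For (i), write $g(\alpha)$ for the bracket minimized in \eqref{eeqqqpapapwwwaappaa}. Since $\alpha\mapsto I[f](x_i-h\alpha\pm\sqrt{hr}\,\sigma_\ell(t_k))$ is Lipschitz with constant $Lh$, the average of the $2r$ interpolation terms differs from its value at $\alpha=0$ by at most $Lh|\alpha|$, so $g(\alpha)\ge g(0)-Lh|\alpha|+\half h|\alpha|^2$; hence $g(\alpha)>g(0)$ whenever $|\alpha|>2L$, and since $g$ is continuous and coercive the infimum is attained at some $\alpha^{\ast}$ with $|\alpha^{\ast}|\le 2L$. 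Taking $K_L:=\overline{B(0,2L+1)}$ gives a compact set, with diameter depending only on $L$, in whose interior all minimizers lie. For (ii) and (iii) one only uses $0\le\beta_i\le1$ and $\sum_i\beta_i\equiv1$: the first gives $v\le w\Rightarrow I[v]\le I[w]$ pointwise, hence monotonicity of $\hat S_{\rho,h}[\mu]$ in its first argument; the second gives $I[w+c]=I[w]+c$, so each of the $2r$ interpolation terms in \eqref{eeqqqpapapwwwaappaa} gains a summand $c$ and, being weighted by $\frac{1}{2r}$, the bracket gains exactly $c$ for every $\alpha$, hence so does the infimum.

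For (iv), set $L_\phi:=\sup\|D\phi\|$ over $\RR^d\times[0,T]$. By \eqref{lipchitzianidad} each $I[\phi_{k_{n+1}}]$ is Lipschitz with constant $\sqrt d\,L_\phi$, so by (i) the infimum defining $\hat S_{\rho_n,h_n}[\mu_n](\phi_{k_{n+1}},i_n,k_n)$ is attained at some $\alpha_n$ with $|\alpha_n|\le M:=2\sqrt d\,L_\phi+1$, a bound independent of $n$; consequently the infimum may be taken over the fixed compact set $\overline{B(0,M)}$. Next, by \eqref{www2} and the hypothesis $\rho_n^2=o(h_n)$, one replaces each occurrence of $I[\phi_{k_{n+1}}]$ by the smooth function $\phi(\cdot,t_{k_n}+h_n)$ at the cost of an error that is $o(h_n)$ uniformly in $\alpha$. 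Then one Taylor-expands $\phi$ about $(x_{i_n},t_{k_n})$: for $|\alpha|\le M$ the increments $-h_n\alpha\pm\sqrt{h_n r}\,\sigma_\ell(t_{k_n})$ are $O(\sqrt{h_n})$ and remain in a fixed compact set, so by boundedness of the derivatives of $\phi$ up to order three all remainders are $o(h_n)$, uniformly in $\alpha$. Averaging over the two signs $\pm$, the first-order term contributes $-h_n\langle D\phi,\alpha\rangle$ and the odd cross terms cancel, while the second-order term contributes $\half h_n\sum_{\ell}\sigma_\ell(t_{k_n})^\top D^2\phi\,\sigma_\ell(t_{k_n})=\half h_n\,\mbox{tr}\big(\sigma(t_{k_n})\sigma(t_{k_n})^\top D^2\phi\big)$ up to an $O(h_n^2)$ term, and the time increment contributes $h_n\partial_t\phi$.

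Collecting the above, and using that the $o(h_n)$ error is uniform in $\alpha$ so the infimum can be pulled out,
$$
\hat S_{\rho_n,h_n}[\mu_n](\phi_{k_{n+1}},i_n,k_n)=\phi(x_{i_n},t_{k_n})+h_n\inf_{|\alpha|\le M}\left[\partial_t\phi+\half\mbox{tr}\big(\sigma\sigma^\top D^2\phi\big)+F(x_{i_n},\mu_n(t_{k_n}))-\langle D\phi,\alpha\rangle+\half|\alpha|^2\right]+o(h_n),
$$
all derivatives being evaluated at $(x_{i_n},t_{k_n})$ and $\sigma$ at $t_{k_n}$. Since $|D\phi(x_{i_n},t_{k_n})|\le L_\phi<M$, the constrained minimum over $\alpha$ equals the unconstrained one, attained at $\alpha=D\phi(x_{i_n},t_{k_n})$ with value $-\half|D\phi(x_{i_n},t_{k_n})|^2$. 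Dividing by $h_n$, rearranging, and letting $n\to\infty$ --- invoking continuity of $\partial_t\phi,D\phi,D^2\phi$, continuity of $\sigma$ from {\bf(A1)(ii)}, the convergences $(x_{i_n},t_{k_n})\to(x,t)$ and $d_1(\mu_n(t_{k_n}),\mu(t))\to0$ (the latter from $\mu_n\to\mu$ in $C([0,T];\P_1)$ and continuity of $\mu$), and continuity of $F$ --- one obtains the asserted limit $-\partial_t\phi(x,t)-\half\mbox{tr}(\sigma(t)\sigma(t)^\top D^2\phi(x,t))+\half|D\phi(x,t)|^2-F(x,\mu(t))$.

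The main obstacle is (iv). Within it, the two delicate points are: (a) obtaining the $n$-independent bound $|\alpha_n|\le M$ from (i), which is exactly what reduces the infimum over $\RR^d$ to one over a fixed compact set and thereby makes the Taylor remainders uniformly $o(h_n)$; and (b) the bookkeeping of orders in the expansion --- checking that the $O(h_n^{3/2})$ pure-space remainders, the $O(h_n^2)$ terms quadratic in $\alpha$, and the $O(h_n^2)$ time--time and mixed terms all vanish after division by $h_n$, whereas the diffusive increments of size $\sqrt{h_n}$ recombine, after averaging the $\pm$ pair, precisely into the trace term $\half\mbox{tr}(\sigma\sigma^\top D^2\phi)$. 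Items (i)--(iii) are routine.
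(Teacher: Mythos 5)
Your proof is correct and follows essentially the same route as the paper's: (i) via the Lipschitz-versus-quadratic coercivity bound on minimizers, (ii)--(iii) directly from the properties of the interpolation operator, and (iv) by confining the infimum to a fixed compact set of controls via (i), using \eqref{www2} with $\rho_n^2=o(h_n)$, Taylor expanding so the $\pm$ average cancels the odd terms and produces the trace term, and identifying the infimum with $-\half|D\phi|^2$ before passing to the limit. The only cosmetic difference (expanding about $(x_{i_n},t_{k_n})$ rather than in space at time $t_{k_n+1}$, and a harmless mislabel of the mixed space--time remainder's order, which is still $o(h_n)$) does not affect the argument.
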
 \smallskip
\begin{proof} Properties {\rm(ii)} and {\rm(iii)} follows directly from \eqref{eeqqqpapapwwwaappaa}. Now, since $I[f]$ is bounded and continuous we directly obtain the existence of a minimizer  $\bar{\alpha}$ of the r.h.s. of \eqref{eeqqqpapapwwwaappaa}. Letting
 $$g(\alpha):= \frac{1}{2r} \sum_{\ell=1}^{r} \left(I [f](x_{i}-h\alpha+\sqrt{h  r}\sigma_{\ell}(t_{k}))+  I[f]( x_{i}-h\alpha-\sqrt{h r}\sigma_{\ell}(t_{k}))\right)$$
we have that $g$ is Lipschitz with constant $h\sqrt{d}L$ and
$$  \half h|\bar{\alpha}|^{2}  \leq g(0)-  g(\bar{\alpha}) \leq h\sqrt{d}L |\bar{\alpha}|. $$
The above expression implies that $|\bar{\alpha}| \leq 2\sqrt{d}L$, which proves {\rm(i)}. Now,  in order to prove  {\rm(iv)} let
$\phi\in C_{c}^{\infty}(\RR^{d})$ and notice that since $I[\phi(\cdot,t)]$ is Lipschitz with a constant depending only on $\| D \phi(\cdot, t)\|_{\infty}$ (and thus independent of $(\mu, \rho, h)$), we obtain by {\rm(i)} a fixed compact $K_{\phi}\subseteq \RR^{d}$ (depending only on $\phi$) such that the infima in the r.h.s. of \eqref{eeqqqpapapwwwaappaa} are attained in $K_{\phi}$.   Using this fact, for every $\ell=1,\hdots,r$ and $\alpha \in K_{\phi}$ a Taylor expansion yields to \small
 \be\label{T3}\ba{ll}
\phi(x_{i_n}-h_n\alpha+ \sqrt{h_n r}\sigma_{\ell}(t_{k_n}),t_{k_n+1})=& \phi(x_{i_{n}},t_{k_n+1})+ D \phi(x_{i_n},t_{k_n+1})^{\top}\left(-h_n\alpha + \sqrt{h_n r}\sigma_{\ell}(t_{k_n})\right)\\[4pt]
\; & + \frac{h_n r}{2}\sigma_{\ell}(t_{k_n})^{\top}D^{2} \phi(x_{i_n},t_{k_n+1})\sigma_{\ell}(t_{k_n})+ o(h_n),\\[4pt]
\phi(x_{i_n}-h_n\alpha - \sqrt{h_n r}\sigma_{\ell}(t_{k_n}),t_{k_n+1})=& \phi(x_{i_{n}},t_{k_n+1})+ D  \phi(x_{i_n},t_{k_n+1})^{\top}\left(-h_n\alpha - \sqrt{h_n r}\sigma_{\ell}(t_{k_n})\right)\\[4pt]
\; & + \frac{h_nr}{2}\sigma_{\ell}(t_{k_n})^{\top}D^{2} \phi(x_{i_n},t_{k_n+1})\sigma_{\ell}(t_{k_n})+ o(h_n).\ea
\ee
\normalsize
Using  the interpolation error estimate \eqref{www2} and adding the equations in \eqref{T3}, we get \small
\be\ba{ll}\label{cons}
\phi(x_{i_n},t_{k_n})-\hat{S}_{\rho_{n},h_{n}}[\mu_{n}](\phi_{k_{n+1}},i_n,k_n) =&\phi(x_{i_{n}},t_{k_n})-\phi(x_{i_{n}},t_{k_n+1})-h_n F(x_{i_n},\mu_n(t_{k_n}))\\[4pt]
\;& -\frac{h_n}{2} \mbox{tr}(\sigma(t_{k_n}) \sigma(t_{k_n})^{\top} D^{2}\phi(x_{i_n}, t_{k_{n}+1}))\\[4pt]
\;& - h_{n} \inf_{\alpha\in \mbox{int}(K_{\phi})} \left[-D \phi(x_{i_{n}},t_{k_n+1})^{\top}\alpha +  \half h_n|\alpha|^{2} \right]\\[4pt]
\;&+ O(\rho_{n}^2)+ o(h_n).
\ea \ee\normalsize
If we choose  $K_{\phi}$ large enough  such that for all $(x',t')\in \RR^{d} \times [0,T]$,  \small
$$\inf_{\alpha\in \mbox{int}(K_{\phi})} \left[-D \phi(x',t')^{\top}\alpha + \half |\alpha|^{2} \right]=\inf_{\alpha\in\RR^{d}} \left[-D \phi(x',t')^{\top}\alpha + \half |\alpha|^{2}  \right]=- \half |D\phi(x',t')|^{2},$$ \normalsize
then, dividing by $h_n$ and letting $h_{n}\downarrow 0$, we can pass to the limit in  \eqref{cons} to obtain the result.
\end{proof} \smallskip

We now define 
\be\label{mmfnnfnnfnndks}v_{\rho,h}[\mu](x,t):= I[v_{\cdot, \left[ \frac{t}{h} \right]}](x) \hspace{0.5cm} \mbox{for all } \hspace{0.2cm} (x,t) \in \RR^{d}\times [0,T].
\ee
Note that taking $t=t'$ in \eqref{qmemnndnnnn}, we have that $v[\mu](\cdot,t)$ is Lipschitz. We now prove the corresponding result for $v_{\rho,h}[\mu](\cdot,t)$ as well as a   discrete version of \eqref{qmewemnndnnnnq}.
\begin{lemma}\label{propiedadesdelavmuhrhocontinua} For every $t\in [0,T]$, the following assertions hold true:\smallskip\\
{\rm(i) [Lipschitz property]} The function  $v_{\rho,h}[\mu](\cdot,t)$ is Lipschitz with constant independent of $(\rho,h,\mu,t)$.  \\[4pt]
{\rm(ii) [Discrete semiconcavity]} There exists   $c>0$ independent of $(\rho,h,\mu,t)$ such that  \small
\be\label{semiconcavidaddebilcondosf}  v_{\rho,h}[\mu](x_i+x_{j},t)  - 2v_{\rho,h}[\mu](x_i,t) +v_{\rho,h}[\mu](x_i-x_{j},t) \leq  c  |x_{j}|^2 \hspace{0.4cm} \forall \; x_i, \; x_j \in \G_{\rho} \; \; \mbox{and $t\in [0,T]$}.
\ee \normalsize
\end{lemma}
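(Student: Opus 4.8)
The plan is to establish both estimates by a backward induction over the time index $k$, working with the \emph{nodal} quantities $\{v_{i,k}\}$ that solve the scheme \eqref{dwoewoeoweaap}, and only passing to the interpolant $v_{\rho,h}[\mu](\cdot,t)=I[v_{\cdot,[t/h]}]$ defined in \eqref{mmfnnfnnfnndks} at the very end. The elementary but decisive observation is that the triangulation is invariant under translations by grid vectors, so that $\beta_i(x+x_j)=\beta_{i-j}(x)$ for every $x\in\RR^d$ and $x_i,x_j\in\mathcal{G}_\rho$; hence, for $w\in B(\mathcal{G}_\rho)$ and any grid vector $x_j$, $I[w](\cdot+x_j)=\sum_n w_{n+j}\beta_n(\cdot)$, and therefore: if $|w_{n+j}-w_n|\le\ell|x_j|$ for all $n$ then $|I[w](y+x_j)-I[w](y)|\le\ell|x_j|$ for all $y$; and if $w_{n+j}-2w_n+w_{n-j}\le c_0|x_j|^2$ for all $n$ then $I[w](y+x_j)-2I[w](y)+I[w](y-x_j)\le c_0|x_j|^2$ for all $y$. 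In other words, interpolation does not worsen these constants as long as the increments are grid vectors, and this is exactly what keeps the constants uniform over the $N=T/h$ induction steps.

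For (i), let $\ell_k$ be the least constant with $|v_{i,k}-v_{i',k}|\le\ell_k|x_i-x_{i'}|$ for all $x_i,x_{i'}\in\mathcal{G}_\rho$. Since $G(\cdot,\mu(t_N))$ is $c$-Lipschitz by \textbf{(A1)(i)}, $\ell_N\le c$. For the induction step, fix $x_i,x_{i'}$ and, using Proposition~\ref{propiedadesbasicasesquema1}(i) (applicable because $I[v_{\cdot,k+1}]$ is Lipschitz by \eqref{lipchitzianidad}, $\ell_{k+1}$ being finite), let $\bar\alpha$ realize the infimum defining $v_{i',k}$ in \eqref{eeqqqpapapwwwaappaa}. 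Using $\bar\alpha$ as a competitor for $v_{i,k}$, the quadratic terms $\tfrac h2|\bar\alpha|^2$ cancel, each of the $2r$ interpolation terms is compared at two points differing exactly by the grid vector $x_i-x_{i'}$ (so, by the remark above, the difference is $\le\ell_{k+1}|x_i-x_{i'}|$), and $|F(x_i,\mu(t_k))-F(x_{i'},\mu(t_k))|\le c|x_i-x_{i'}|$ by \textbf{(A1)(i)}. Exchanging the roles of $x_i$ and $x_{i'}$ gives $\ell_k\le\ell_{k+1}+hc$, hence $\ell_k\le c+Nhc=(1+T)c$ for every $k$. Finally, \eqref{lipchitzianidad} yields that $v_{\rho,h}[\mu](\cdot,t)=I[v_{\cdot,[t/h]}]$ is Lipschitz with constant at most $\sqrt d(1+T)c$, which is independent of $(\rho,h,\mu,t)$.

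For (ii), let $c_k$ be the least constant with $v_{i+j,k}-2v_{i,k}+v_{i-j,k}\le c_k|x_j|^2$ for all $x_i,x_j\in\mathcal{G}_\rho$ (note $x_i\pm x_j\in\mathcal{G}_\rho$). A second-order Taylor expansion together with $\|G(\cdot,m)\|_{C^2}\le c$ gives $c_N\le c$. For the induction step, fix $x_i,x_j$, let $\bar\alpha$ realize the infimum defining $v_{i,k}$, and use $\bar\alpha$ as a competitor both for $v_{i+j,k}$ and for $v_{i-j,k}$; adding these two inequalities and subtracting twice the identity for $v_{i,k}$, the terms $\tfrac h2|\bar\alpha|^2$ cancel and the right-hand side reduces to $\tfrac1{2r}$ times a sum over $\ell=1,\dots,r$ of second differences of $I[v_{\cdot,k+1}]$ — each with base point $x_i-h\bar\alpha\pm\sqrt{hr}\,\sigma_\ell(t_k)$ and increment the grid vector $x_j$ — plus $h$ times the second difference $F(x_i+x_j,\mu(t_k))-2F(x_i,\mu(t_k))+F(x_i-x_j,\mu(t_k))$. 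By the preliminary remark the former are each $\le c_{k+1}|x_j|^2$, and by $\|F(\cdot,m)\|_{C^2}\le c$ (Taylor) the latter is $\le c|x_j|^2$; hence $c_k\le c_{k+1}+hc$ and $c_k\le(1+T)c$ for all $k$. Since $v_{\rho,h}[\mu](x_i,t)=I[v_{\cdot,[t/h]}](x_i)=v_{i,[t/h]}$ by $\beta_m(x_i)=\delta_{mi}$, this is exactly \eqref{semiconcavidaddebilcondosf}.

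The only step requiring real care is the preliminary remark on the triangulation: without it, interpolating a discretely Lipschitz (resp. discretely semiconcave) grid function costs a dimensional factor $\sqrt d$ (resp. essentially a factor $2$), and iterating such a loss over $N\sim T/h$ steps would destroy the uniformity in $(\rho,h)$. The rest is a routine backward induction, together with \textbf{(A1)} for the data and Proposition~\ref{propiedadesbasicasesquema1} to handle the infima.
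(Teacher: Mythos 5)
Your proof is correct and follows essentially the same route as the paper's: the translation identity $\beta_m(x_{i+j}+z)=\beta_{m-j}(x_i+z)$ to transfer discrete Lipschitz/semiconcavity constants through the interpolant without loss, the optimal control at one node used as a competitor at the shifted nodes, and a backward induction accumulating only an $O(h)$ contribution per step from $F$, with \textbf{(A1)} handling the data and \eqref{lipchitzianidad} giving the final Lipschitz bound for $v_{\rho,h}[\mu](\cdot,t)$. Your bookkeeping $\ell_k\le\ell_{k+1}+hc$, $c_k\le c_{k+1}+hc$ is just a cleaner rendering of the paper's $c(1+h)$, $c(1+T)$ recursion.
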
\smallskip
\begin{proof}   Using that  $\beta_{m}(x_{i+j}+z)= \beta_{m-j}(x_{i}+z)$, for every $m$,$i$, $j \in \ZZ^{d}$ and $z\in \RR^{d}$,  for every $\alpha \in \RR^{d}$, $k=0, \hdots, N-1$ and $\ell=1,\hdots,r$, we have that 
\be\label{rnwrnnassasdddas}\ba{c}I [v_{\cdot,k+1}](x_{i+j}-h\alpha + \sqrt{rh}\sigma_{\ell}(t_{k})) -I [v_{\cdot,k+1}](x_{i}-h\alpha + \sqrt{rh}\sigma_{\ell}(t_{k}))\\[4pt]
= \sum_{m \in \ZZ^{d}}\beta_{m}(x_{i}-h\alpha + \sqrt{rh}\sigma_{\ell}(t_{k}))(v_{m+j,k+1}-v_{m,k+1}), \ea\ee
with an analogous equality for the difference 
$$I [v_{\cdot,k+1}](x_{i+j}-h\alpha - \sqrt{rh}\sigma_{\ell}(t_{k})) -I [v_{\cdot,k+1}](x_{i}-h\alpha - \sqrt{rh}\sigma_{\ell}(t_{k})).$$
Since $G(\cdot, \mu)$   is Lipschitz  by {\bf{A1(i)}}, with a constant $c$ independent of $\mu$,   \eqref{dwoewoeoweaap}-\eqref{eeqqqpapapwwwaappaa}   imply that   $|v_{m+j,N}- v_{m,N}| \leq c|x_{m+j}-x_{m}|=c|x_{i+j}-x_{i}|$ for all $m\in \ZZ^{d}$.  Therefore,  since $\sum_{m\in \ZZ^{d}} \beta_{m}(x)=1$ for all $x\in \RR^{d}$, we obtain with {\bf{A1(i)}},  \eqref{dwoewoeoweaap}-\eqref{eeqqqpapapwwwaappaa}  and \eqref{rnwrnnassasdddas} that
$$ |v_{i+j, N-1}-v_{i,N-1}| \leq   c(1+ h)|x_{i+j}-x_{i}|$$
Therefore, by a recursive argument using \eqref{rnwrnnassasdddas} we easily obtain that 
$$ |v_{i+j, k}-v_{i,k}| \leq  c(1+ Th)|x_{i+j}-x_{i}| \hspace{0.3cm} \mbox{for all $i$, $j \in \ZZ^{d}$ and $k=0,\hdots, N$},$$
and assertion {\rm(i)} follows from \eqref{mmfnnfnnfnndks} and \eqref{lipchitzianidad}. 
%
%
In order to prove the second assertion note that, since $G$ is semiconcave, the result is valid  for $v_{\cdot,N}$. Inductively, we  suppose the result for $t_{k+1}$, i.e.
\be\label{mrmwmrwooofofofofofo}
 v_{i+j, k+1} - 2 v_{i, k+1} +  v_{i-j, k+1}\leq c |x_{j}|^{2}, \hspace{0.4cm} \forall \; i, \; j \in \ZZ^{d},
\ee
 and we prove its validity    for $t_{k}$  ($k=0,\hdots, N-1$). Let us denote by $\alpha_{i,k}$ an optimal solution for the problem defining $\hat{S}_{\rho, h}[\mu](v_{\cdot,k+1},i,k)$. Then\scriptsize
\be\label{dmdmdmaassssdd}\ba{rcl}
v_{i+j,k} &\leq&\frac{1}{2r} \sum_{\ell=1}^{r} \left[ I [v_{\cdot,k+1}](x_{i+j}-h\alpha_{i,k}+ \sqrt{rh}\sigma_{\ell}(t_{k})) +\frac{1}{2} I[v_{\cdot,n+1}](x_{i+j}-h\alpha_{i,k}- \sqrt{rh}\sigma_{\ell}(t_k)) \right]+ \half h |\alpha_{i,k}|^{2}\\[4pt]
\; &\; &+ h F(x_{i+j}, \mu(t_{k})),\\[2pt]
v_{i-j,k} &\leq&\frac{1}{2r} \sum_{\ell=1}^{r} \left[ I [v_{\cdot,k+1}](x_{i-j}-h\alpha_{i,k}+ \sqrt{rh}\sigma_{\ell}(t_{k})) +\frac{1}{2} I[v_{\cdot,n+1}](x_{i-j}-h\alpha_{i,k}- \sqrt{rh}\sigma_{\ell}(t_k)) \right]+    \half h |\alpha_{i,k}|^{2}\\[4pt]
\; &\; &+ h F(x_{i-j}, \mu(t_{k})),\\[2pt]
v_{i,k} &=&\frac{1}{2r} \sum_{\ell=1}^{r} \left[ I[v_{\cdot,k+1}](x_{i}-h\alpha_{i,k}+ \sqrt{rh}\sigma_{\ell}(t_{k})) +\frac{1}{2} I[v_{\cdot,n+1}](x_{i}-h\alpha_{i,k}- \sqrt{rh}\sigma_{\ell}(t_k)) \right]+   \half h |\alpha_{i,k}|^{2}\\[4pt]
\; &\; &+ h F(x_{i}, \mu(t_{k})).\\[2pt]
\ea
\ee\normalsize
On the other hand,  we have that \scriptsize
$$\ba{c}  I [v_{\cdot,k+1}](x_{i+j}-h\alpha_{i,k}+ \sqrt{rh}\sigma_{\ell}(t_{k}))-2I[v_{\cdot,k+1}](x_{i}-h\alpha_{i,k}+ \sqrt{rh}\sigma_{\ell}(t_{k}))+  I [v_{\cdot,k+1}](x_{i-j}-h\alpha_{i,k}+ \sqrt{rh}\sigma_{\ell}(t_{k}))=\\[4pt]
\sum_{m\in \ZZ^{d}} \beta_{m} (x_{i}-h\alpha_{i,k}+\sqrt{rh}\sigma_{\ell}(t_{k}))\left[ v_{m+j, k+1} - 2 v_{m, k+1} +  v_{m-j, k+1}\right]\leq c   |x_j|^2,\ea.$$
\normalsize
where the last inequality follows from \eqref{mrmwmrwooofofofofofo}. Analogously, \scriptsize
$$  I [v_{\cdot,k+1}](x_{i+j}-h\alpha_{i,k}- \sqrt{rh}\sigma_{\ell}(t_{k}))-2I[v_{\cdot,k+1}](x_{i}-h\alpha_{i,k}- \sqrt{rh}\sigma_{\ell}(t_{k}))+  I [v_{\cdot,k+1}](x_{i-j}-h\alpha_{i,k}- \sqrt{rh}\sigma_{\ell}(t_{k}))\leq c |x_j|^2.$$\normalsize
Therefore, combining \eqref{dmdmdmaassssdd},   the semiconcavity of $F$  and the above inequalities, we obtain    
$$ v_{i+j,k}-2v_{i,k} +v_{i-j,k}\leq   c(1+ h) |x_j|^2.$$
In particular, for $n=N-1$, we get 
$$v_{i+j,N-1}-2v_{i,N-1} +v_{i-j,N-1}  \leq c(1+h)|x_j|^2$$
and by recurrence,  for all $k=0,\hdots, N$, 
$$v_{i+j,k}-2v_{i,k} +v_{i-j,k}  \leq c(1+T) |x_j|^2$$
from which the result follows. \end{proof}\smallskip

Now, we regularize $v_{\rho,h}$ in the space variable. Let $\eps>0$ and  $\phi \in C_{0}^{\infty}(\RR^{d})$, with $\phi\geq 0$ and $\int_{\RR^{d}}\phi(x) \dd x= 1$. Define $\phi_{\eps}(x):= \frac{1}{\eps^{d}} \phi(x/\eps)$ and set
\be\label{ammrmooroorotttpsps} v^{\eps}_{\rho,h}[\mu](\cdot,t):= \phi_{\eps} \ast v_{\rho,h}[\mu](\cdot,t) \hspace{0.4cm} \forall \; t\in [0,T].\ee
 Using that  $v^{\eps}_{\rho,h}[\mu](\cdot,t)$ is Lipschitz by Lemma \ref{propiedadesdelavmuhrhocontinua}{\rm(i)},   we easily check that there exists $\gamma>0$ (independent of  $(\eps,\rho,h,\mu,t)$) such that 
\be\label{aproximacionuniformedelaconvolucion}\ba{rcl} \| v^{\eps}_{\rho,h}[\mu](\cdot,\cdot)-v_{\rho,h}[\mu](\cdot,\cdot)\|_{\infty} & \leq & \gamma \eps, \\[6pt]
											 \| D^{\alpha}v^{\eps}_{\rho,h}[\mu](\cdot,\cdot)\|_{\infty} &\leq & c_{\alpha}\eps^{1-|\alpha|} \ea
\ee
where  $\alpha$ is a multiindex with $|\alpha|>0$ and $c_{\alpha}>0$ depends only on $\alpha$. We have the following results whose proofs are provided in \cite{CS12}.
\begin{lemma}\label{propiedadesdelavmuhrhocontinuaconeps} For every $t\in [0,T]$ we have that:\smallskip\\
{\rm(i)}  The function  $v^{\eps}_{\rho,h}[\mu](\cdot,t)$ is Lipschitz with constant   $c$ independent of $(\rho,h,\mu,t)$.  \\[4pt]
{\rm(ii)} If $d=1$, then 
\be\label{propiedadalaachdou}  \left(D v_{\rho,h}^{\eps}(x_j,t_k)- D v_{\rho,h}^{\eps}(x_i,t_k)\right)(x_{j}-x_{i}) \leq c(x_{j}-x_{i})^2 \hspace{0.2cm} \forall \; k=0,\hdots,N.\ee
\end{lemma}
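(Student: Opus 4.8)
The plan is to obtain both assertions by transporting the corresponding estimates for $v_{\rho,h}[\mu]$ from Lemma \ref{propiedadesdelavmuhrhocontinua} through the mollification \eqref{ammrmooroorotttpsps}, using only that $\phi_{\eps}\geq 0$ and $\int_{\RR^{d}}\phi_{\eps}(z)\,\dd z=1$.

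For {\rm(i)} I would simply write $v^{\eps}_{\rho,h}[\mu](x,t)-v^{\eps}_{\rho,h}[\mu](y,t)=\int_{\RR^{d}}\phi_{\eps}(z)\big(v_{\rho,h}[\mu](x-z,t)-v_{\rho,h}[\mu](y-z,t)\big)\,\dd z$, bound the integrand by $c|x-y|$ where $c$ is the Lipschitz constant of $v_{\rho,h}[\mu](\cdot,t)$ provided by Lemma \ref{propiedadesdelavmuhrhocontinua}{\rm(i)} (which is independent of $(\rho,h,\mu,t)$), and integrate; convolution against a probability density inherits exactly this constant, which is moreover independent of $\eps$.

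For {\rm(ii)}, where $d=1$ is used, fix $k$ and set $w:=v_{\rho,h}[\mu](\cdot,t_{k})=I[v_{\cdot,k}]$, a piecewise-affine function on $\RR$ whose a.e.-derivative is the step function equal to the difference quotient $p_{m}:=\rho^{-1}(v_{m+1,k}-v_{m,k})$ on $(x_{m},x_{m+1})$. Since $w\in W^{1,\infty}(\RR)$, differentiating \eqref{ammrmooroorotttpsps} under the integral sign gives $Dv^{\eps}_{\rho,h}[\mu](x,t_{k})=\int_{\RR}\phi_{\eps}(z)\,w'(x-z)\,\dd z$. Next, the discrete semiconcavity \eqref{semiconcavidaddebilcondosf} applied with grid vectors $x_{i}\mapsto x_{m}$ and $x_{j}\mapsto\rho$ reads $v_{m+1,k}-2v_{m,k}+v_{m-1,k}\leq c\rho^{2}$, i.e.\ $p_{m}-p_{m-1}\leq c\rho$ for every $m$; summing, $p_{m}-p_{m'}\leq c\rho(m-m')$ whenever $m\geq m'$. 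Now take $x_{j},x_{i}\in\G_{\rho}$ with, say, $x_{j}>x_{i}$, so $j>i$. For a.e.\ $z$ there is $m(z)$ with $x_{j}-z\in(x_{m(z)},x_{m(z)+1})$, and since $x_{j}-x_{i}=(j-i)\rho$ is a grid vector we get $x_{i}-z\in(x_{m(z)-(j-i)},x_{m(z)-(j-i)+1})$, hence $w'(x_{j}-z)-w'(x_{i}-z)=p_{m(z)}-p_{m(z)-(j-i)}\leq c\rho(j-i)=c(x_{j}-x_{i})$, uniformly in $z$. Integrating against $\phi_{\eps}$ yields $Dv^{\eps}_{\rho,h}[\mu](x_{j},t_{k})-Dv^{\eps}_{\rho,h}[\mu](x_{i},t_{k})\leq c(x_{j}-x_{i})$, and multiplying by $x_{j}-x_{i}>0$ gives \eqref{propiedadalaachdou}; the case $x_{j}<x_{i}$ follows by symmetry and $x_{j}=x_{i}$ is trivial.

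I do not expect a genuine obstacle: the argument is elementary, and the only points needing care are the justification of differentiating under the convolution integral (routine since $w$ is Lipschitz) and the index bookkeeping in the last step, which works precisely because the shift $x_{j}-x_{i}$ is an integer multiple of $\rho$. The real content of the restriction $d=1$ should be stressed: in one dimension the a.e.-gradient of $I[v_{\cdot,k}]$ is exactly the scalar difference quotient, so the one-directional discrete semiconcavity converts directly into a one-sided Lipschitz bound on $w'$; in $d\geq 2$ the gradient of the interpolant on a simplex mixes $d+1$ nodal values and is not controlled by second differences along a single grid direction, so this transfer — and hence the estimate \eqref{propiedadalaachdou} — breaks down.
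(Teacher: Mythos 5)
Your proof is correct and takes essentially the same route as the paper, whose own ``proof'' is just the citation of \cite[Lemma 3.4(i) and Lemma 3.6]{CS12}: Lipschitz continuity is inherited by convolving with the probability density $\phi_{\varepsilon}$, and for $d=1$ the discrete semiconcavity of Lemma \ref{propiedadesdelavmuhrhocontinua}(ii) is pushed through the mollification to give a one-sided Lipschitz bound on the derivative at grid points. Your explicit bookkeeping with the piecewise-constant slopes of the interpolant (and your remark on why the argument is genuinely one-dimensional) is a correct, self-contained reconstruction of that cited argument.
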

\begin{proof} See  \cite[Lemma 3.4{\rm(i)} and Lemma 3.6]{CS12}.
\end{proof} \vspace{0.3cm}

The following convergence result holds true:
\begin{theorem}\label{covergencefullydiscreteconeps}  Let  $(\rho_{n}, h_{n},\eps_{n})\to  0$  be such that $ \frac{\rho_{n}^{2}}{h_{n}} \to 0$ and  $\small \rho_{n}=o(\eps_n)$\normalsize. Then, for every sequence $\mu_{n}\in C([0,T]; \P_{1})$ such that $\mu_{n}\to \mu$ in $C([0,T]; \P_{1})$, we have that $v^{\eps_{n}}_{\rho_{n},h_{n}}[\mu_{n}]\to v[\mu]$ uniformly over compact sets and $Dv^{\eps_{n}}_{\rho_{n},h_{n}}[\mu_{n}](x,t)\to Dv[\mu](x,t)$ at every $(x,t)$ such that $Dv[\mu](x,t)$ exists.
\end{theorem}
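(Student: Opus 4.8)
The plan is to run a Barles--Souganidis type argument for the scheme \eqref{dwoewoeoweaap}, relying on the structural properties of $\hat S_{\rho,h}[\mu]$ gathered in Proposition \ref{propiedadesbasicasesquema1} and on the comparison principle for \eqref{wewewssssaa} recalled after \eqref{ecppd}, then to transfer the convergence from $v_{\rho,h}[\mu]$ to $v^{\eps}_{\rho,h}[\mu]$ through the first estimate in \eqref{aproximacionuniformedelaconvolucion}, and finally to upgrade uniform convergence to convergence of the gradients at points of differentiability by using the discrete semiconcavity of Lemma \ref{propiedadesdelavmuhrhocontinua}{\rm(ii)}. The first point is stability: since $F$ and $G$ are bounded by {\bf(A1)(i)}, testing \eqref{eeqqqpapapwwwaappaa} with $\alpha=0$ one checks that the constant functions $\pm(\|G\|_{\infty}+T\|F\|_{\infty})$ are super- and subsolutions of \eqref{dwoewoeoweaap}, so the monotonicity {\rm(ii)} (and the translation property {\rm(iii)}) give $\|v_{\rho,h}[\mu]\|_{\infty}\le c$ uniformly in $(\rho,h,\mu)$. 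Consequently the relaxed upper and lower limits $\bar v:=\limsup^{*}v_{\rho_{n},h_{n}}[\mu_{n}]$ and $\underline v:=\liminf_{*}v_{\rho_{n},h_{n}}[\mu_{n}]$ are finite with $\underline v\le\bar v$, and by Lemma \ref{propiedadesdelavmuhrhocontinua}{\rm(i)} they are Lipschitz in the space variable.

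The next step is to identify the limit. A standard test-function argument shows that $\bar v$ is a viscosity subsolution and $\underline v$ a viscosity supersolution of \eqref{wewewssssaa}: given $\phi\in C_{c}^{\infty}(\RR^{d}\times[0,T))$ touching $\bar v$ from above at an interior point $(x_{0},t_{0})$ (strictly, after the usual modification), one locates a local maximum of $\phi-v_{\rho_{n},h_{n}}[\mu_{n}]$ at a grid point $(x_{i_{n}},t_{k_{n}})\to(x_{0},t_{0})$ along which the corresponding scheme value converges to $\bar v(x_{0},t_{0})$; writing the scheme equality there, using the monotonicity {\rm(ii)} to substitute $\phi(\cdot,t_{k_{n}+1})$ for the unknown at level $k_{n}+1$ up to an $o(1)$ error, dividing by $h_{n}$ and passing to the limit with the consistency {\rm(iv)} — which already absorbs the convergence $\mu_{n}\to\mu$ and uses exactly $\rho_{n}^{2}=o(h_{n})$ — yields the subsolution inequality at $(x_{0},t_{0})$; the supersolution property is symmetric. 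The terminal condition passes to the limit since $v_{\rho_{n},h_{n}}[\mu_{n}](\cdot,T)$ is the interpolant of $G(\cdot,\mu_{n}(T))$ on $\mathcal{G}_{\rho_{n}}$, which tends to $G(\cdot,\mu(T))$ locally uniformly by \eqref{www2}, $\rho_{n}\to0$ and $\mu_{n}(T)\to\mu(T)$. The comparison principle for \eqref{wewewssssaa} then forces $\bar v\le\underline v$, hence $\bar v=\underline v=v[\mu]$, and since this common value is continuous the convergence $v_{\rho_{n},h_{n}}[\mu_{n}]\to v[\mu]$ is locally uniform for the whole sequence. Finally the first line of \eqref{aproximacionuniformedelaconvolucion} gives $\|v^{\eps_{n}}_{\rho_{n},h_{n}}[\mu_{n}]-v_{\rho_{n},h_{n}}[\mu_{n}]\|_{\infty}\le\gamma\eps_{n}\to0$, so $v^{\eps_{n}}_{\rho_{n},h_{n}}[\mu_{n}]\to v[\mu]$ locally uniformly as well.

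For the gradients, fix $(x,t)$ at which $Dv[\mu](x,t)$ exists and freeze the time variable. The functions $u_{n}:=v^{\eps_{n}}_{\rho_{n},h_{n}}[\mu_{n}](\cdot,t)$ are smooth, uniformly Lipschitz by Lemma \ref{propiedadesdelavmuhrhocontinuaconeps}{\rm(i)}, converge locally uniformly to $u:=v[\mu](\cdot,t)$ by the previous step, and — this is the key point — are semiconcave with a constant $c$ independent of $n$. Indeed $v_{\rho_{n},h_{n}}[\mu_{n}](\cdot,t)$ is the piecewise affine interpolant of a grid function whose second differences are bounded above by $c\,\rho_{n}^{2}$ (Lemma \ref{propiedadesdelavmuhrhocontinua}{\rm(ii)}), so the positive part of its distributional Hessian carries mass $O(\rho_{n})$ per cell; convolving with $\phi_{\eps_{n}}$ and using $\rho_{n}=o(\eps_{n})$, which makes $\sum_{i}\rho_{n}\phi_{\eps_{n}}(\cdot-x_{i})$ a bounded Riemann sum for $\int\phi_{\eps_{n}}=1$, turns this into $D^{2}u_{n}\le c$ (this is also the mechanism behind Lemma \ref{propiedadesdelavmuhrhocontinuaconeps}{\rm(ii)}). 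Then the classical fact on semiconcave functions applies: the sequence $Du_{n}(x)$ is bounded, and any cluster point $p$ satisfies, by passing to the limit in the concavity inequality for $u_{n}(\cdot)-\tfrac{c}{2}|\cdot|^{2}$, the inclusion $p\in D^{+}u(x)$; since $u$ is differentiable at $x$, $D^{+}u(x)=\{Du(x)\}$, so $p=Du(x)$ and the whole sequence $Du_{n}(x)$ converges to $Dv[\mu](x,t)$. By \eqref{qmemnndnnnn} such points of differentiability form a set of full measure.

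The Barles--Souganidis framework makes the identification of the limit essentially a bookkeeping exercise once Proposition \ref{propiedadesbasicasesquema1} is in hand, and the presence of the varying data $\mu_{n}$ causes no extra difficulty precisely because assertion {\rm(iv)} was stated with $\mu_{n}\to\mu$ built in. The genuinely delicate part is the gradient convergence, i.e.\ controlling the semiconcavity of the mollified interpolant $v^{\eps}_{\rho,h}[\mu]$ uniformly in all the discretization parameters; this is where the one-sided second-difference bound of Lemma \ref{propiedadesdelavmuhrhocontinua}{\rm(ii)} and the scaling hypothesis $\rho_{n}=o(\eps_{n})$ are both indispensable, and it is the reason the regularization step was introduced in the first place.
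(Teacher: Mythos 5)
Your proposal is correct and follows essentially the same route as the paper, whose proof establishes the first assertion by the Barles--Souganidis stability/consistency/comparison argument built on Proposition \ref{propiedadesbasicasesquema1} (citing \cite{BS91} and \cite[Theorem 3.3]{CS12}) and the gradient convergence from the uniform discrete semiconcavity of Lemma \ref{propiedadesdelavmuhrhocontinua}(ii) via \cite[Theorem 3.5]{CS12} and \cite[Lemma 4.3 and Remark 4.4]{AchdouCamilliCorrias11}. Your transfer of the nodal second-difference bound to the mollified function using $\rho_{n}=o(\eps_{n})$, followed by the superdifferential argument at points of differentiability, is precisely the mechanism those citations supply, so the two arguments coincide in substance.
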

\begin{proof} Using the properties of the scheme proved in Proposition \ref{propiedadesbasicasesquema1}, the first assertion follows by classical arguments (see \cite{BS91} and \cite[Theorem 3.3]{CS12}). The second assertion is proved following the same lines of the proof of  \cite[Theorem 3.5]{CS12}, which uses the uniform discrete semi-concavity of $v^{\eps_{n}}_{\rho_{n},h_{n}}[\mu_{n}]$, proved in our case in Lemma \ref{propiedadesdelavmuhrhocontinua},  and  \cite[Lemma 4.3 and Remark 4.4]{AchdouCamilliCorrias11}.
\end{proof}

\section{The fully-discrete scheme for the Fokker-Planck equation}\label{mnnwnrnwaa}
Given a compact set $\K\subseteq \RR^{d}$ let us define  the convex and compact set 
\be\label{mqmwmqmwmnnnrrnnr} 
\SS_{\K}:= \left\{ (m_i)_{i \in \ZZ^{d}} \; ; \; m_{i}\geq 0\; \; \forall \; i\in \ZZ^{d}, \; \; m_{i}=0 \; \; \mbox{if $i\rho \notin \K$} \; \; \mbox{and } \; \sum_{i\in \ZZ^{d}} m_i=1\right\}.
\ee
For $\rho>0$ and $i\in \ZZ^{d}$ we set $E_{i}:= [x_{i}^{1}- \half \rho, x_{i}^{1}+ \half \rho] \times \cdots  [x_{i}^{d}- \half \rho, x_{i}^{d}+ \half \rho]$ and for a given $\mu = \{\mu_{i,k} \; ; \; i\in \ZZ^{d}, \; k =0,\hdots, N\} \in \SS_{\K}^{N+1}$ we define for all $k=0,\hdots, N$ the measure $\tilde{\mu}(t_k) \in \P_{1}(\RR^{d})$ as
\be\label{nademnennnnenbrtbtvvvvv} \dd \tilde{\mu}(t_{k}):= \frac{1}{\rho^{d}} \sum_{i \in \ZZ^{d}}\mu_{i,k} \mathbb{I}_{E_i}(x) \dd x\ee
 and its extension to all $t\in [0,T]$ by
\be\label{mmsamnnnrnnrnrooaa}\tilde{\mu}(t):=\left(\frac{t_{k+1}-t}{h}\right)\mu(t_{k}) +\left(\frac{t-t_{k}}{h}\right) \mu(t_{k+1}) \hspace{0.4cm} \mbox{if } \; \; t \in [t_{k}, t_{k+1}]. \ee
By construction  $\tilde{\mu} \in C([0,T]; \P_1(\RR^{d}))$ and without danger of confusion we will still write $\mu$ for $\tilde{\mu}$. 
Thus, given $\mu \in \SS^{N+1}_{\K}$ we can define $v[\mu](\cdot, \cdot)$ as in Section \ref{mamdmoootroototootjkask}. 
For $\eps>0$,  $i \in \ZZ^{d}$, $\ell=1,\hdots,r$ and  $k=0,\hdots, N-1$ let us set
\be\label{flujofullydiscretounpaso}\ba{rcl}
\Phi^{\eps,\ell,+}_{i,k}[\mu]&:=&x_{i}- h Dv_{\rho,h}^{\eps}[\mu](x_{i},t_{k})+  \sqrt{rh}  \sigma_{\ell}(t_{k}), \\[6pt]
\Phi^{\eps,\ell,-}_{i,k}[\mu]&:=&x_{i}- h  Dv_{\rho,h}^{\eps}[\mu](x_{i},t_{k})-  \sqrt{rh}  \sigma_{\ell}(t_{k}),\ea
\ee
and define $m[\mu]=\{m_{i,k}[\mu] \; ; \; i\in \ZZ^{d}, \; k =0,\hdots, N\}$ recursively  as  
\be\label{eqoooaaaaaaaaaaa}\ba{rcl} m_{i,k+1}[\mu]   &:=&  \frac{1}{2r} \sum_{j\in \ZZ^{d}} \sum_{\ell=1}^{r} \left[\beta_{i}\left( \Phi^{\eps,\ell,+}_{j,k}[\mu]\right)+\beta_{i}\left( \Phi^{\eps,\ell,-}_{j,k}[\mu]\right)\right] m_{j,k}[\mu], \\[4pt]
m_{i,0}[\mu]&:=& \int_{E_{i}} m_{0}(x) \dd x. \ea\ee
 \begin{remark}\label{mwrmmwrmmtmtmtmtttt} There exists a compact set $K_{h}\subseteq \RR^{d}$ such that \small $m[\mu] \in \SS_{K_{h}}^{N+1}$\normalsize. In fact,   using that $m_{0}$ has a compact support and that $\sigma$ and \small $Dv_{\rho,h}^{\eps}[\mu](x_{i},t_{k})$\normalsize are uniformly bounded {\rm(}by Lemma \ref{propiedadesdelavmuhrhocontinuaconeps}{\rm(i))}  we have the existence of a constant $ c>0$ such  that  $m_{i,k}=0$ if  \small $\rho i \notin   B(0,  c/\sqrt{h})$\normalsize, for every $k=0,\hdots, N$. Moreover, \small
$$ \sum_{i\in \ZZ^{d}} m_{i,k+1}[\mu]=  \sum_{j\in \ZZ^{d}} \frac{1}{2r}  \sum_{\ell=1}^{r} \sum_{i\in \ZZ^{d}}  \left[\beta_{i}\left( \Phi^{\eps,\ell,+}_{j,k}[\mu]\right)+\beta_{i}\left( \Phi^{\eps,\ell,-}_{j,k}[\mu]\right)\right] m_{j,k}[\mu]=  \sum_{j\in \ZZ^{d}}m_{j,k}[\mu]=  \sum_{j\in \ZZ^{d}}m_{j,0}[\mu]=1,  $$\normalsize
which implies that the scheme is conservative.
\end{remark}
Associated to \eqref{eqoooaaaaaaaaaaa}  we  set $m_{\rho,h}^{\eps}[\mu]:= \widetilde{m[\mu]} \in C([0,T]; \P_{1}(\RR^{d}))$, defined through \eqref{mmsamnnnrnnrnrooaa}, and for all $k=0,\hdots, N$ we define the measure
\be
\label{masmamsmasmas} \hat{m}_{\rho,h}^{\eps}[\mu](\cdot, t_{k}) :=\sum_{i \in \ZZ^{d}}  m_{i,k} [\mu] \delta_{x_{i}}(\cdot).
\ee
Clearly, $\{ \hat{m}_{\rho,h}^{\eps}[\mu](\cdot, t_{k}) \; ; \; k=0, \hdots, N\} \in\P_{1}(\RR^{d})^{N+1}$. The following simple remark will be very useful in the sequel.
\begin{remark}[Probabilistic interpretation]\label{asmamdamdaepprpr}
Let us define \small
\be\label{mamdmooeooeoerr}\ba{rcl}p_{j,i}^{(k)}&:=& \frac{1}{2r} \sum_{\ell=1}^{r} \left[\beta_{i}\left( \Phi^{\eps,\ell,+}_{j,k}[\mu]\right)+\beta_{i}\left( \Phi^{\eps,\ell,-}_{j,k}[\mu]\right)\right] , \hspace{0.4cm}  \forall \; k=0, \hdots, N-1, \\[4pt]
p_{i}^{(0)}&=&m_{i,0}[\mu].
\ea\ee\normalsize
By classical results in probability theory {\rm(}see e.g. {\rm \cite{Breiman})} the family $\{ p_{j,i}^{(k)} \; ; \; j, \; i \in \ZZ^{d}, \; k=0,\hdots, N-1\}$ together with $\{ p_{i}^{(0)} \; ; \; i \in  \ZZ^{d}\}$ allow to define   a probability space $(\Omega, \F, \PP)$ and a discrete Markov chain $(X_{k})_{0\leq k \leq N}$ taking values in $\ZZ^d$, such that its initial distribution is given by $(p_{i}^{(0)})_{i\in \ZZ^{d}}$, the transition probabilities are given by \eqref{mamdmooeooeoerr} and the law at time $t_k$ is given 
by $ \hat{m}_{\rho,h}^{\eps}[\mu](\cdot, t_k)$. That is,
$$ \PP(X_{0}=x_i)=p_{i}^{(0)}, \; \; \PP(X_{k+1}=x_{i}\; \big | \; X_{k}= x_{j})= p_{j,i}^{(k)} \; \; \; \mbox{and } \; \PP(X_{k}=x_i)= m_{i,k}[\mu].$$
\end{remark}\smallskip

We have the following relation between the  $m_{\rho,h}^{\eps}[\mu] $ and  $\hat{m}_{\rho,h}^{\eps}[\mu]$:
\begin{lemma}\label{mamampprororoororortllltt} There exists a constant  $c>0$ {\rm(}independent of $(\rho,h,\eps,\mu)$\rm{)}  such that for all $k=0,\hdots, N$
$$ \bar{d}_{1}\left( m_{\rho,h}^{\eps}[\mu](\cdot,t_k), \hat{m}_{\rho,h}^{\eps}[\mu](\cdot, t_k)\right) \leq  c \rho.$$
\end{lemma}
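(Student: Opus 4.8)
The plan is to compare the two measures $m_{\rho,h}^{\eps}[\mu](\cdot,t_k)$ and $\hat{m}_{\rho,h}^{\eps}[\mu](\cdot,t_k)$ by exhibiting an explicit transport plan between them and estimating the resulting cost. Recall from \eqref{nademnennnnenbrtbtvvvvv} that $m_{\rho,h}^{\eps}[\mu](\cdot,t_k)$ has density $\rho^{-d}\sum_{i}m_{i,k}[\mu]\mathbb{I}_{E_i}$, i.e. it spreads each mass $m_{i,k}[\mu]$ uniformly over the cube $E_i$ centered at $x_i$ with side $\rho$, whereas $\hat{m}_{\rho,h}^{\eps}[\mu](\cdot,t_k)=\sum_i m_{i,k}[\mu]\delta_{x_i}$ keeps that same mass concentrated at the center $x_i$. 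The natural coupling $\gamma$ then transports, for each $i$, the uniform distribution on $E_i$ (scaled by $m_{i,k}[\mu]$) onto the Dirac at $x_i$; concretely $\gamma := \rho^{-d}\sum_{i\in\ZZ^d} m_{i,k}[\mu]\,(\mathbb{I}_{E_i}(x)\dd x)\otimes\delta_{x_i}(y)$.

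The main step is then just to bound the cost of $\gamma$. Using the dual/primal definition of $d_1$ (the infimum over couplings), we have
\be
d_1\big(m_{\rho,h}^{\eps}[\mu](\cdot,t_k),\hat{m}_{\rho,h}^{\eps}[\mu](\cdot,t_k)\big)\le \int_{\RR^d\times\RR^d}|x-y|\dd\gamma(x,y)=\frac{1}{\rho^d}\sum_{i\in\ZZ^d} m_{i,k}[\mu]\int_{E_i}|x-x_i|\dd x.
\ee
For $x\in E_i$ one has $|x-x_i|\le \tfrac{\sqrt d}{2}\rho$, so each inner integral is bounded by $\tfrac{\sqrt d}{2}\rho\cdot\rho^d$, and since $\sum_i m_{i,k}[\mu]=1$ (the scheme is conservative, Remark \ref{mwrmmwrmmtmtmtmtttt}) the whole sum is bounded by $\tfrac{\sqrt d}{2}\rho=:c\rho$ with $c$ depending only on the dimension $d$ — in particular independent of $(\rho,h,\eps,\mu)$ and of $k$. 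One small point to address is which distance $\bar d_1$ denotes in the statement: if it is a bounded/truncated version of $d_1$ (as the bar notation and the earlier discussion of $\P_1$ suggest) then the bound follows a fortiori since truncating the cost function only decreases it, while $\gamma$ is a valid coupling regardless; if $\bar d_1$ is $d_1$ itself the argument above is already complete.

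I do not expect any serious obstacle here: the statement is essentially the observation that redistributing mass within cubes of side $\rho$ moves it by at most $O(\rho)$ in the Wasserstein-$1$ metric. The only things to be slightly careful about are (i) checking that $\gamma$ indeed has the correct marginals, which is immediate from Fubini and $\rho^{-d}\int_{E_i}\dd x=1$, and (ii) invoking the conservativity $\sum_i m_{i,k}[\mu]=1$ to turn the per-cell estimate into a global one uniformly in $k$. Both are routine, so this will be a short proof.
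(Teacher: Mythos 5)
Your proof is correct. The only difference from the paper is the side of Kantorovich duality you work on: the paper tests the difference of the two measures against an arbitrary $1$-Lipschitz function $\phi$ via the dual formula \eqref{formadualkantorovic}, reducing everything to the per-cell estimate $\left|\frac{1}{\rho^d}\int_{E_i}\phi(x)\,\dd x-\phi(x_i)\right|\le \frac{1}{\rho^d}\int_{E_i}|x-x_i|\,\dd x\le c\rho$, whereas you exhibit the explicit coupling that collapses the uniform distribution on each cube $E_i$ onto the Dirac at $x_i$ and bound its transport cost directly. Both arguments hinge on exactly the same ingredients — the displacement within a cell is at most $\tfrac{\sqrt d}{2}\rho$ and $\sum_i m_{i,k}[\mu]=1$ — so the content is identical; your primal version has the minor advantage of not invoking the duality statement at all, while the paper's dual version saves the (routine) verification of the marginals of the coupling. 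Your remark about $\bar d_1$ is well taken: the bar is just a notational slip in the statement (the proof of Proposition \ref{Ledwewewmmaboundform_hro} applies the lemma with $d_1$), so your bound is exactly what is needed.
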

\begin{proof} Let $\phi \in C(\RR^{d})$ be 1-Lipschitz. Then, by definition, 
$$ \int_{\RR^{d}} \phi(x) \dd \left[ m_{\rho,h}^{\eps}[\mu](\cdot,t_k)- \hat{m}_{\rho,h}^{\eps}[\mu](\cdot, t_{k})\right](x)= \sum_{i \in \ZZ^{d}} m_{i,k} [\mu]\left[  \frac{1}{\rho^{d}}\int_{E_{i}} \phi(x) \dd x - \phi(x_{i}) \right].$$
Then, the result follows, since for all $i \in \ZZ^{d}$, 
$$\left|\frac{1}{\rho^{d}}\int_{E_{i}} \phi(x) \dd x - \phi(x_{i})\right| \leq \frac{1}{\rho^{d}} \int_{E_{i}} |x-x_{i}| \dd x \leq c \rho.$$
\end{proof} \smallskip 

The following result will be the key to prove a compactness property for $ m_{\rho,h}^{\eps}[\mu] $.
\normalsize
\begin{proposition}\label{Ledwewewmmaboundform_hro}Suppose that $ \rho=O(h)$. Then, there exists  a constant   $c >0$ {\rm(}independent of  $(\rho,h,\eps,\mu)${\rm)} such that  for all $0\leq s\leq t \leq T$, we have that
\be d_{1}\left(m^{\eps}_{\rho,h}[\mu](t),m^{\eps}_{\rho,h}[\mu](s)\right) \leq c\sqrt{t-s}.\ee
\end{proposition}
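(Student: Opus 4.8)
The plan is to exploit the probabilistic interpretation from Remark \ref{asmamdamdaepprpr}. By Lemma \ref{mamampprororoororortllltt} it suffices to estimate $\bar{d}_1\big(\hat{m}^{\eps}_{\rho,h}[\mu](\cdot,t_k),\hat{m}^{\eps}_{\rho,h}[\mu](\cdot,t_{k'})\big)$ for grid times $t_{k'}=s$, $t_k=t$ (the interpolation \eqref{mmsamnnnrnnrnrooaa} between grid times only adds a convex-combination error of the same order, since $d_1$ is convex and the bound $\sqrt{t-s}$ is concave). For grid times, let $(X_k)_{0\le k\le N}$ be the Markov chain of Remark \ref{asmamdamdaepprpr}; then the pair $(\rho X_{k'},\rho X_k)$ (identifying the index $j$ with the point $x_j=\rho j$) is a coupling of the two measures, so by \eqref{formadualkantorovic} and Jensen's inequality,
\[
d_1\big(\hat{m}^{\eps}_{\rho,h}[\mu](\cdot,t_k),\hat{m}^{\eps}_{\rho,h}[\mu](\cdot,t_{k'})\big)\le \EE\big|x_{X_k}-x_{X_{k'}}\big|\le \Big(\EE\big|x_{X_k}-x_{X_{k'}}\big|^2\Big)^{1/2}.
\]
So it remains to bound the one-step conditional second moments and telescope.

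First I would compute, for each fixed $k$, the conditional increment. Given $X_k=x_j$, the chain moves to a vertex $x_i$ of the simplex containing one of the points $\Phi^{\eps,\ell,\pm}_{j,k}[\mu]=x_j-hDv^{\eps}_{\rho,h}[\mu](x_j,t_k)\pm\sqrt{rh}\,\sigma_\ell(t_k)$, with probability $\beta_i$ of that point, averaged over $\ell=1,\dots,r$ and the two signs. Since $\sum_i\beta_i(y)x_i$ lies within distance $c\rho$ of $y$ (barycentric interpolation of the identity on a simplex of size $O(\rho)$) and $\sum_i\beta_i(y)|x_i-y|^2\le c\rho^2$, the conditional mean of the displacement $x_{X_{k+1}}-x_j$ is $-hDv^{\eps}_{\rho,h}[\mu](x_j,t_k)+O(\rho)$ — note the $\sqrt{rh}\sigma_\ell$ terms cancel in the $\pm$ average — while the conditional second moment of the displacement is $O(h)+O(\rho^2)=O(h)$, using that $Dv^{\eps}_{\rho,h}[\mu]$ is uniformly bounded (Lemma \ref{propiedadesdelavmuhrhocontinuaconeps}(i)), that $\sigma$ is bounded on $[0,T]$, and the hypothesis $\rho=O(h)$. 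Writing $Y_m:=x_{X_m}$, this gives $\EE\big(|Y_{m+1}-Y_m|^2\mid \F_m\big)\le Ch$ for all $m$.

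To pass from one-step to many-step I would use the standard martingale-type decomposition. Write $Y_k-Y_{k'}=\sum_{m=k'}^{k-1}(Y_{m+1}-Y_m)$, split each increment into its conditional mean $a_m:=\EE(Y_{m+1}-Y_m\mid\F_m)$ and a martingale difference $\xi_m:=(Y_{m+1}-Y_m)-a_m$. Then
\[
\EE|Y_k-Y_{k'}|^2\le 2\,\EE\Big|\sum_{m=k'}^{k-1}a_m\Big|^2+2\,\EE\Big|\sum_{m=k'}^{k-1}\xi_m\Big|^2.
\]
For the martingale part, orthogonality of the $\xi_m$ gives $\EE|\sum\xi_m|^2=\sum_m\EE|\xi_m|^2\le\sum_m\EE|Y_{m+1}-Y_m|^2\le (k-k')Ch= C(t-s)$. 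For the drift part, $|a_m|\le Ch$ (bounded drift plus $O(\rho)=O(h)$), so $|\sum a_m|\le (k-k')Ch=C(t-s)\le CT$, whence $\EE|\sum a_m|^2\le C(t-s)\cdot(CT)\le C'(t-s)$ since $t-s\le T$. Combining, $\EE|Y_k-Y_{k'}|^2\le C(t-s)$, and taking square roots finishes the grid-time estimate; reinstating the time interpolation as noted above gives the claim for all $0\le s\le t\le T$.

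The main obstacle is the bookkeeping in the one-step estimate: one must be careful that the stochastic (diffusive) part of the displacement contributes $O(h)$ to the second moment while the $\pm$ symmetrization kills its contribution to the mean, and that the interpolation/barycentric errors ($O(\rho)$ to the mean, $O(\rho^2)$ to the second moment) are absorbed by $\rho=O(h)$ rather than spoiling the $\sqrt{t-s}$ rate. Everything else is the routine martingale telescoping above; no one-dimensionality or semiconcavity is needed here, only the uniform Lipschitz bound on $v^{\eps}_{\rho,h}[\mu]$ and boundedness of $\sigma$.
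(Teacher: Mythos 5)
Your argument is correct, and its core is genuinely different from the paper's. Both proofs share the same skeleton: use the Markov chain of Remark \ref{asmamdamdaepprpr} as a coupling so that $d_{1}(\hat{m}^{\eps}_{\rho,h}[\mu](t_k),\hat{m}^{\eps}_{\rho,h}[\mu](t_{k'}))\leq \EE|x_{X_k}-x_{X_{k'}}|$, absorb the barycentric errors using $\rho=O(h)$, transfer from $\hat{m}^{\eps}_{\rho,h}$ to $m^{\eps}_{\rho,h}$ via Lemma \ref{mamampprororoororortllltt}, and finish with the piecewise-linear time interpolation. Where you diverge is the key estimate of $\EE|x_{X_k}-x_{X_{k'}}|$: the paper expands this expectation over all paths, peels off one transition at a time by a recursive argument, and compares the accumulated noise to an explicit symmetric random walk $\sum_p \sigma(t_p)Z_p$ in $\RR^r$, to which it applies Cauchy--Schwarz and independence; you instead apply Jensen first and then a Doob-type decomposition into a predictable drift (of size $O(h)$ per step, since the $\pm\sqrt{rh}\,\sigma_\ell$ terms cancel in the mean --- in fact exactly, because $\sum_i\beta_i(y)x_i=y$) plus a martingale part whose increments are orthogonal, giving $\EE|Y_k-Y_{k'}|^2\leq C(t_k-t_{k'})$ directly. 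Your route is shorter and more standard (it is the usual $L^2$ estimate for a Markov chain with $O(h)$ drift and $O(h)$ conditional variance), while the paper's computation is more explicit and self-contained. One point to tighten: for non-grid times the parenthetical ``$d_1$ convex and $\sqrt{t-s}$ concave'' is not by itself sufficient when $s,t$ lie in the same or adjacent intervals (there $\sqrt{h}\not\leq C\sqrt{t-s}$); you need, as in the paper's final step, the explicit interpolation weights, which produce the factor $\frac{t-s}{h}$ in front of $c\sqrt{h}$ and hence the bound $c\,\frac{t-s}{\sqrt{h}}\leq c'\sqrt{t-s}$ when $t-s\leq 2h$. With that routine fix your proof is complete.
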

\begin{proof} Let us first show that for all $k$, $k'=0, \hdots, N$, with $k'\leq k$,  we have that
\begin{eqnarray}\label{mmdmmadmwwwff}d_{1}(\hat{m}^{\eps}_{\rho,h}[\mu](t_{k}),\hat{m}^{\eps}_{\rho,h}[\mu](t_{k'}) )&\leq& c \sqrt{(k-k') h}=c \sqrt{t_{k}- t_{k'}},\\[4pt]
\label{mmdsdssdmmadmwwwff}d_{1}(m^{\eps}_{\rho,h}[\mu](t_{k}),m^{\eps}_{\rho,h}[\mu](t_{k'}) )&\leq& c\sqrt{(k-k') h}=c \sqrt{t_{k}- t_{k'}}.
\end{eqnarray}
For notational simplicity we will suppose that $k'=0$ and we omit the dependence on $\mu$. Consider the Markov chain $X_{(\cdot)}$ defined in Remark \ref{asmamdamdaepprpr} and let $\gamma\in \P(\RR^{d}\times \RR^{d})$ be the joint law  $X_{k}$ and $X_0$. By definition of $\db_1$ we have  that
\be\label{mfdmmfwoorootttpppspspsa}  d_{1}(\hat{m}^{\eps}_{\rho,h} (t_{k}),\hat{m}^{\eps}_{\rho,h}(0) ) \leq  \EE_{\PP}\left( \left| X_{k}-X_{0}\right|  \right),\ee
where $\PP$ is the probability measure introduced in Remark \ref{asmamdamdaepprpr} and $\EE_{\PP}(Y)= \int_{\Omega} Y(\omega)\dd \PP(\omega)$, for all $Y: \Omega \to \RR$ which are $\F$ measurable.  We have that\small
\be\ba{ll}\EE_{\PP}\left( \left| X_{k}-X_{0}\right|\right)&=\sum_{i_0,\hdots, i_{k}} \left|\sum_{p=0}^{k-1}(x_{i_{p+1}}- x_{i_{p}})\right|p_{i_{k-1},i_{k}}^{(k-1)}p_{i_{k-2},i_{k-1}}^{(k-2)} \dots p_{i_{0},i_{1}}^{(0)} m_{i_{0},0} ,\\[4pt]
\; &=\sum_{i_0,\hdots, i_{k-1}}\sum_{i_{k}} \left|x_{i_{k}}- x_{i_{k-1}}+\sum_{p=0}^{k-2}(x_{i_{p+1}}- x_{i_{p}})\right|p_{i_{k-1},i_{k}}^{(k-1)}p_{i_{k-2},i_{k-1}}^{(k-2)} \dots p_{i_{0},i_{1}}^{(0)} m_{i_{0},0} ,  \ea\ee \normalsize
and by  \eqref{mamdmooeooeoerr} we obtain  \footnotesize 
$$\ba{l} \sum_{i_{k}} \left|x_{i_{k}}- x_{i_{k-1}}+\sum_{p=0}^{k-2}(x_{i_{p+1}}- x_{i_{p}})\right|p_{i_{k-1},i_{k}}^{(k-1)}= \\ 
 \frac{1}{2r} \sum_{\ell=1}^{r}\sum_{i_{k}} \left|x_{i_{k}}- x_{i_{k-1}}+\sum_{p=0}^{k-2}(x_{i_{p+1}}- x_{i_{p}})\right|  \left[\beta_{i_k}\left( \Phi^{\eps,\ell,+}_{i_{k-1},k-1} \right)+\beta_{i_{k}}\left( \Phi^{\eps,\ell,-}_{i_{k-1},k-1} \right)\right].\ea$$ \normalsize
Using that $\rho=O(h)$, for $\ell=1,\hdots, r$ we have that \tiny
$$ \ba{rcl} \sum_{i_{k}} \left|x_{i_{k}}- x_{i_{k-1}}+\sum_{p=0}^{k-2}(x_{i_{p+1}}- x_{i_{p}})\right|   \beta_{i_k}\left( \Phi^{\eps,\ell,+}_{i_{k-1},k-1} \right)&\leq & \left| \Phi^{\eps,\ell,+}_{i_{k-1},k-1}- x_{i_{k-1}}+\sum_{p=0}^{k-2}(x_{i_{p+1}}- x_{i_{p}})\right| +  O(\rho), \\[4pt]
\; & = & \left| - h Dv_{\rho,h}^{\eps}(x_{i_{k-1}},t_{k-1})+  \sqrt{rh}  \sigma_{\ell}(t_{k-1}) +\sum_{p=0}^{k-2}(x_{i_{p+1}}- x_{i_{p}})\right| \\[4pt]
\; & \; &+  O(\rho)\\[4pt]
\; & \leq & \left|  \sqrt{rh}  \sigma_{\ell}(t_{k-1}) +\sum_{p=0}^{k-2}(x_{i_{p+1}}- x_{i_{p}})\right| +ch.
\ea$$\normalsize
Analogously, \footnotesize
$$  \sum_{i_{k}} \left|x_{i_{k}}- x_{i_{k-1}}+\sum_{p=0}^{k-2}(x_{i_{p+1}}- x_{i_{p}})\right|   \beta_{i_k}\left( \Phi^{\eps,\ell,-}_{i_{k-1},k-1} \right) \leq \left| - \sqrt{hr}  \sigma_{\ell}(t_{k-1}) +\sum_{p=0}^{k-2}(x_{i_{p+1}}- x_{i_{p}})\right| +  ch.$$
\normalsize
Thus, \small
$$\ba{l} \sum_{i_{k}} \left|x_{i_{k}}- x_{i_{k-1}}+\sum_{p=0}^{k-2}(x_{i_{p+1}}- x_{i_{p}})\right|p_{i_{k-1},i_{k}}^{(k-1)}\leq \\[4pt]
\frac{1}{2r} \sum_{\ell_{k-1}=1}^{r}\sum_{e_{\ell_{k-1}} \in \{-1,1\}} \left|  \sqrt{rh}e_{\ell_{k-1}}  \sigma_{\ell_{k-1}}(t_{k-1}) +\sum_{p=0}^{k-2}(x_{i_{p+1}}- x_{i_{p}})\right| +  ch.\ea$$
\normalsize
Therefore, \footnotesize
$$\ba{ll}\EE_{\PP}\left( \left| X_{k}-X_{0}\right|\right) \leq& \frac{1}{2r} \ds  \sum_{\ell_{k-1}=1}^{r} \sum_{e_{\ell_{k-1}} \in \{-1,1\}} \sum_{i_0,\hdots, i_{k-1}} \left|  \sqrt{rh}e_{\ell_{k-1}}  \sigma_{\ell_{k-1}}(t_{k-1}) +\sum_{p=0}^{k-2}(x_{i_{p+1}}- x_{i_{p}})\right|p_{i_{k-2},i_{k-1}}^{(k-2)} \dots p_{i_{0},i_{1}}^{(0)} m_{i_{0},0}\\
\; & +  ch.\ea$$\normalsize
By a recursive argument, we get
\be\label{mrmwmnnnananrrrrtttsa} \EE_{\PP}\left( \left| X_{k}-X_{0}\right|\right) \leq   \frac{\sqrt{rh}}{(2r)^{k}} \sum_{\ell_{k-1}, \hdots, \ell_{0} \in \{1,\hdots, r\}} \sum_{e_{\ell_{k-1}}, \hdots, e_{\ell_{0}}  \in \{-1,1\}} \left|    \sum_{p=0}^{k-1}e_{\ell_{p}}  \sigma_{\ell_{p}}(t_{p})\right|  
 +  ckh. \ee
Now, consider  $k$ steps of a random walk  in  $\RR^{r}$, i.e. a sequence of independent random vectors $Z_{0}, \hdots, Z_{k}$ in $\RR^{r}$,  defined  in $(\Omega, \F, \PP)$,  satisfying that for all $0\leq p \leq k$\small
$$ \PP( Z_{p}^{\ell}=1)=  \PP( Z_{p}^{\ell}=-1)= \frac{1}{2r} \hspace{0.4cm} \mbox{for all $\ell=1, \hdots, r \; \; $ and } \;  \; \PP\left( \bigcup_{1\leq \ell_1 < \ell_2 \leq r } \{  Z_{p}^{\ell_1}\neq 0\} \cap \{  Z_{p}^{\ell_2}\neq 0\}  \right)=0.$$ \normalsize
Then, by the Cauchy-Schwarz inequality, \small
$$ \frac{1}{(2r)^{k}} \sum_{\ell_{k-1}, \hdots, \ell_{0} \in \{1,\hdots, r\}} \sum_{e_{\ell_{k-1}}, \hdots, e_{\ell_{0}}  \in \{-1,1\}} \left|    \sum_{p=0}^{k-1}e_{\ell_{p}}  \sigma_{\ell_{p}}(t_{p})\right| = \EE_{\PP} \left( \left|    \sum_{p=0}^{k-1}   \sigma(t_{p}) Z_{p}\right| \right)\leq \left[ \EE_{\PP} \left( \left|    \sum_{p=0}^{k-1}   \sigma(t_{p}) Z_{p}\right|^{2} \right)\right]^{\half}.    $$\normalsize
Since $\EE^{\PP}( Z_{p})=0$, by independence we easily get that 
$$  \EE_{\PP} \left(    \sum_{p=0}^{k-1} \left|   \sigma(t_{p}) Z_{p}\right|^{2} \right)=\frac{1}{r}   \sum_{p=0}^{k-1} \mbox{tr}(\sigma(t_{p}) \sigma(t_{p})^{\top}), $$
and since $\sigma$ is bounded, we have that 
$$  \frac{1}{(2r)^{k}} \sum_{\ell_{k-1}, \hdots, \ell_{0}} \sum_{e_{\ell_{k-1}}, \hdots, e_{\ell_{0}} } \left|    \sum_{p=0}^{k-1}e_{\ell_{p}}  \sigma_{\ell_{p}}(t_{p})\right| \leq c\sqrt{k}, $$
for some $c>0$. Thus, combining \eqref{mfdmmfwoorootttpppspspsa}, \eqref{mrmwmnnnananrrrrtttsa}  and the above inequality, we obtain  that
$$d_{1}(\hat{m}^{\eps}_{\rho,h}[\mu](t_{k}),\hat{m}^{\eps}_{\rho,h}[\mu](t_{k'}) )\leq c \sqrt{kh}+ c kh = O(\sqrt{kh}), $$
which proves  \eqref{mmdmmadmwwwff}. By the triangular inequality we get  \small
$$ \ba{rcl} d_{1}\left(m^{\eps}_{\rho,h}[\mu](t_k),m^{\eps}_{\rho,h}[\mu](t_{k'})\right) &\leq&  d_{1}\left(m^{\eps}_{\rho,h}[\mu](t_k),\hat{m}^{\eps}_{\rho,h}[\mu](t_k)\right) +  d_{1}\left(m^{\eps}_{\rho,h}[\mu](t_{k'}),\hat{m}^{\eps}_{\rho,h}[\mu](t_{k'})\right) \\
													\, & \; & + d_{1}\left(\hat{m}^{\eps}_{\rho,h}[\mu](t_{k}),\hat{m}^{\eps}_{\rho,h}[\mu](t_{k'})\right).\ea$$\normalsize
Since $\rho=O(h)$, we get by  Lemma \ref{mamampprororoororortllltt} and \eqref{mmdmmadmwwwff} that 
$$  d_{1}\left(m^{\eps}_{\rho,h}[\mu](t_k),m^{\eps}_{\rho,h}[\mu](t_{k'})\right)  =O(  \rho + \sqrt{(k-k')h}) \leq O(\sqrt{t_{k}-t_{k'}}),$$
which proves \eqref{mmdsdssdmmadmwwwff}. Now, suppose that $s \in (t_{k_1}, t_{k_1+1})$ and $t \in (t_{k_2}, t_{k_2+1})$, then by the triangular inequality \footnotesize
\be\label{mdmmeoooeeooerrr}d_{1}\left(m^{\eps}_{\rho,h} (t),m^{\eps}_{\rho,h} (s)\right)\leq d_{1}\left(m^{\eps}_{\rho,h} (t_{k_1+1}),m^{\eps}_{\rho,h} (s)\right)  + d_{1}\left(m^{\eps}_{\rho,h}(t_{k_1+1}),m^{\eps}_{\rho,h} (t_{k_2})\right)+ d_{1}\left(m^{\eps}_{\rho,h}(t_{k_2}),m^{\eps}_{\rho,h} (t)\right).\ee \normalsize
Now, by  \eqref{mmsamnnnrnnrnrooaa} and \eqref{mmdsdssdmmadmwwwff} \footnotesize
$$\ba{rcl}  d_{1}\left(m^{\eps}_{\rho,h} (t_{k_1+1}),m^{\eps}_{\rho,h} (s)\right) +   d_{1}\left(m^{\eps}_{\rho,h} (t_{k_2}),m^{\eps}_{\rho,h} (t)\right) &\leq& \frac{t_{k_{1}+1}- s}{h} d_{1}\left(m^{\eps}_{\rho,h} (t_{k_{1}+1}),m^{\eps}_{\rho,h} (t_{k_{1}})\right)\\[4pt]
\; & \; &+  \frac{t- t_{k_2}}{h} d_{1}\left(m^{\eps}_{\rho,h} (t_{k_2}),m^{\eps}_{\rho,h} (t_{k_2+1})\right)\\[4pt]
\; &\leq & c\left[\frac{t-t_{k_{2}}}{\sqrt{h}}+ \frac{t_{k_{1}+1}-s}{\sqrt{h}}\right] \ea$$\normalsize
If  $k_{1}+1 \neq k_{2}$ we have, since $t-t_{k_{2}}\leq h$ and $t_{k_{1}+1}-s\leq h$,
\be\label{msmmotototoasoosos} d_{1}\left(m^{\eps}_{\rho,h} (t_{k_1+1}),m^{\eps}_{\rho,h} (s)\right) +   d_{1}\left(m^{\eps}_{\rho,h} (t_{k_2}),m^{\eps}_{\rho,h} (t)\right)  =O(\sqrt{h})=  O(\sqrt{t_{k_{2}}-t_{k_{1}+1}})= O(\sqrt{t-s}).\ee
If $k_{1}+1=k_{2}$, we have that $t-s \leq 2 h$
\be\label{msmmotototoasooqwqqaasos} d_{1}\left(m^{\eps}_{\rho,h} (t_{k_1+1}),m^{\eps}_{\rho,h} (s)\right) +   d_{1}\left(m^{\eps}_{\rho,h} (t_{k_2}),m^{\eps}_{\rho,h} (t)\right)  = O(\frac{ t-s}{\sqrt{h}})= O(\sqrt{t-s}). \ee
Therefore, since in both cases we have $d_{1}\left(m^{\eps}_{\rho,h} (t_{k_2}),m^{\eps}_{\rho,h} (t_{k_1+1})\right)= O(\sqrt{t-s}) $, inequalities \eqref{mdmmeoooeeooerrr} and \eqref{msmmotototoasoosos}-\eqref{msmmotototoasooqwqqaasos}  imply that
$$ 
d_{1}\left(m^{\eps}_{\rho,h} (t),m^{\eps}_{\rho,h} (s)\right) = O (\sqrt{t-s}).$$
\end{proof}\smallskip\\
Now, let us prove some uniform bounds for $m^{\eps}_{\rho,h}[\mu](\cdot)$ in $\P_{2}(\RR^{d})$. 
\begin{proposition} \label{masmammadapppqpqwppa} 
If  $\rho =O(\sqrt{h})$, then there exists  $c>0$  {\rm(}independent of  $(\rho,h,\eps,\mu)${\rm)} such that 
\be\label{mameiriiririiriiriirir} \int_{\RR^{d}} |x|^{2}  \dd m_{\rho, h}^{\eps}[\mu](t) \leq  c \hspace{0.5cm} \forall \; t\in [0,T].\ee
\end{proposition}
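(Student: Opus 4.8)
The natural approach is to work first with the measures $\hat m_{\rho,h}^\eps[\mu](\cdot,t_k)$ and the associated Markov chain $(X_k)_{0\le k\le N}$ from Remark~\ref{asmamdamdaepprpr}, control $\EE_\PP(|X_k|^2)$ by a discrete Gronwall argument, and then transfer the estimate to $m^\eps_{\rho,h}[\mu]$ using Lemma~\ref{mamampprororoororortllltt} together with the linear interpolation in time~\eqref{mmsamnnnrnnrnrooaa}. First I would note that, by~\eqref{mameiriiririiriiriirir}'s reduction, it suffices to bound $\sum_{i}|x_i|^2 m_{i,k}[\mu]=\EE_\PP(|X_k|^2)$ uniformly in $k$; indeed, by Lemma~\ref{mamampprororoororortllltt} and $\rho=O(\sqrt h)$, $d_1(m_{\rho,h}^\eps[\mu](t_k),\hat m_{\rho,h}^\eps[\mu](\cdot,t_k))\le c\rho$ is bounded, and a bound on the second moment of $\hat m_{\rho,h}^\eps[\mu](\cdot,t_k)$ plus a uniform bound on $d_2$ (or a direct estimate since the support is in a ball of radius $c/\sqrt h$, though that alone is too crude) gives the claim at the grid times, and then~\eqref{mmsamnnnrnnrnrooaa} is a convex combination so it extends to all $t\in[0,T]$ by convexity of $x\mapsto|x|^2$ and Jensen, or simply because $\int|x|^2\,\dd\tilde\mu(t)$ is a convex combination of the values at $t_k,t_{k+1}$.

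The core computation: using the transition probabilities~\eqref{mamdmooeooeoerr} and the fact that $\beta_i$ are barycentric coordinates supported near their node, one has, conditionally on $X_k=x_j$,
\[
\EE_\PP\big(|X_{k+1}|^2\,\big|\,X_k=x_j\big)=\frac{1}{2r}\sum_{\ell=1}^r\Big(\big|\Phi^{\eps,\ell,+}_{j,k}[\mu]\big|^2+\big|\Phi^{\eps,\ell,-}_{j,k}[\mu]\big|^2\Big)+O(\rho)\big(1+|x_j|\big),
\]
the $O(\rho)(1+|x_j|)$ term coming from $\big|\sum_i \beta_i(y)|x_i|^2-|y|^2\big|\le c\rho(1+|y|)$ (a standard quadratic-interpolation estimate on a regular triangulation). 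Plugging in $\Phi^{\eps,\ell,\pm}_{j,k}[\mu]=x_j-hDv^\eps_{\rho,h}[\mu](x_j,t_k)\pm\sqrt{rh}\,\sigma_\ell(t_k)$, expanding the square, and using that $Dv^\eps_{\rho,h}[\mu]$ and $\sigma$ are uniformly bounded (Lemma~\ref{propiedadesdelavmuhrhocontinuaconeps}(i) and {\bf(A1)(ii)}), the cross terms are $O(h)|x_j|+O(h)$ while the $\sqrt{rh}\,\sigma_\ell$ cross term cancels in the average over $\pm$; one obtains
\[
\EE_\PP\big(|X_{k+1}|^2\,\big|\,X_k=x_j\big)\le |x_j|^2+ch|x_j|+ch+c\rho(1+|x_j|)\le (1+ch)|x_j|^2+ch,
\]
where the last step uses $\rho=O(\sqrt h)=O(h)\cdot O(1/\sqrt h)$... more carefully, $\rho=O(\sqrt h)$ gives $\rho|x_j|\le c\sqrt h|x_j|\le c(h|x_j|^2+1)$ by Young's inequality, which is absorbed. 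Taking expectations and iterating the recursion $a_{k+1}\le(1+ch)a_k+ch$ with $a_k:=\EE_\PP(|X_k|^2)$, the discrete Gronwall lemma yields $a_k\le e^{cT}(a_0+T)\le c$, since $a_0=\sum_i|x_i|^2 m_{i,0}[\mu]$ is bounded because $m_0$ has compact support by {\bf(A1)(iii)}.

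Finally, passing back: $\int|x|^2\,\dd\hat m_{\rho,h}^\eps[\mu](\cdot,t_k)=a_k\le c$, and since $\hat m$ and $m_{\rho,h}^\eps$ differ only by the $\rho$-scale spreading of each atom over the cube $E_i$, we get $\int|x|^2\,\dd m_{\rho,h}^\eps[\mu](t_k)\le \sum_i m_{i,k}[\mu]\int_{E_i}|x|^2\,\tfrac{\dd x}{\rho^d}\le \sum_i m_{i,k}[\mu](|x_i|^2+c\rho^2)\le c$, and for general $t$ the interpolation~\eqref{mmsamnnnrnnrnrooaa} gives a convex combination of two bounded quantities. The main obstacle is the careful bookkeeping in the conditional second-moment expansion — making sure the quadratic-interpolation error against $|x|^2$ is genuinely $O(\rho)(1+|y|)$ rather than $O(\rho)(1+|y|^2)$ (it is, because $|x|^2$ has bounded second derivatives so the interpolation error is $O(\rho^2)$ locally but one must be careful that the relevant comparison point $y$ stays within $O(\rho)$ of the nodes carrying mass), and checking that all the error terms can be absorbed by $(1+ch)|x_j|^2+ch$ using only $\rho=O(\sqrt h)$ and the boundedness of $Dv^\eps_{\rho,h}[\mu]$ and $\sigma$; everything else is a routine discrete Gronwall argument.
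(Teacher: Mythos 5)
Your overall strategy (a one-step recursion for the second moment followed by a discrete Gronwall argument, plus a transfer between $\hat m_{\rho,h}^{\eps}[\mu]$ and $m_{\rho,h}^{\eps}[\mu]$ and the convexity argument in time) mirrors the paper's proof, which does the same computation directly on the densities using \eqref{amdmadmamdssdsdsds} and \eqref{www2}. However, there is a genuine gap in your absorption step. You bound the interpolation error against $|x|^2$ by $c\rho(1+|x_j|)$ and then claim that, via Young's inequality and $\rho=O(\sqrt h)$, the resulting term is absorbed into $(1+ch)|x_j|^2+ch$. This is false: $c\sqrt h\,|x_j|\le ch|x_j|^2+ch$ fails for $|x_j|$ of order $1$ (take $|x_j|=1$: the left side is of order $\sqrt h$, the right side of order $h$), and Young's inequality only yields $c\sqrt h\,|x_j|\le \tfrac{c}{2}h|x_j|^2+\tfrac{c}{2}$, whose constant term is $O(1)$ per time step, not $O(h)$. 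Consequently your recursion is really $a_{k+1}\le(1+ch)a_k+ch+c\rho(1+\sqrt{a_k})$, and summing the $O(\rho)$ contributions over $N=T/h$ steps produces a term of order $\rho/h=O(1/\sqrt h)$, which blows up under the stated hypothesis $\rho=O(\sqrt h)$; your bookkeeping would only close under the stronger assumption $\rho=O(h)$.

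The fix is exactly the observation you make parenthetically and then abandon: since $\phi(x)=|x|^2$ has constant (hence bounded) Hessian, the error of the piecewise linear interpolation is uniformly $O(\rho^2)$ (this is \eqref{www2}, and the analogous cell-average estimate \eqref{amdmadmamdssdsdsds}), with no factor $(1+|y|)$ and no concern about where the evaluation point lies. With the per-step error $O(h+\rho^2)$, the Gronwall iteration gives $a_k\le e^{cT}\bigl(a_0+c(1+\rho^2/h)\bigr)$, which is bounded precisely because $\rho^2=O(h)$ — this is how the paper concludes. The remaining ingredients of your argument (the conditional second-moment identity, the cancellation of the $\pm\sqrt{rh}\,\sigma_\ell$ cross terms, the bound $a_0\le c$ from the compact support of $m_0$, the transfer $\int|x|^2\,\dd m_{\rho,h}^{\eps}[\mu](t_k)\le a_k+c\rho^2$, and the convex combination in time) are all correct.
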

\begin{proof} By notational convenience we omit the dependence on $\mu$.  For every $k=0,\hdots, N-1$ we have
$$\int_{\RR^{d}} |x|^2 \dd m_{\rho,h}^{\eps}(x,t_{k+1})= \sum_{i \in \ZZ^{d}} \frac{1}{\rho^{d}}  \int_{E_{i}} |x|^2 \dd x \: m _{i,k+1},$$
but \small
$$\ba{ll}\sum_{i \in \ZZ^{d}} \frac{1}{\rho^{d}}  \int_{E_{i}}| x|^2 \dd x \: m _{i,k+1}&=  \frac{1}{2r} \sum_{i \in \ZZ^{d}} \frac{1}{\rho^{d}} \int_{E_{i}} |x|^2 \dd x \sum_{j\in \ZZ^{d}}\sum_{\ell=1}^{r}\left[ \beta_{i}(\Phi_{j,k}^{\eps,\ell,+}) +\beta_{i}(\Phi_{j,k}^{\eps,\ell,-})\right] m_{j,k},\\[6pt] 
 \; &=  \frac{1}{2r} \sum_{j\in \ZZ^{d}}  m_{j,k} \sum_{\ell=1}^{r}\sum_{i\in \ZZ^{d}}\left[ \beta_{i}(\Phi_{j,k}^{\eps,\ell,+}) +\beta_{i}(\Phi_{j,k}^{\eps,\ell,-})\right] \frac{1}{\rho^{d}} \int_{E_{i}}| x|^{2} \dd x.\ea$$
%
\normalsize
Now, by a simple Taylor expansion we easily prove that     for $\phi \in C^{2}(\ov{E}_{i})$ we have
\be\label{amdmadmamdssdsdsds} \left| \frac{1}{\rho} \int_{E_{i}}\phi(x) \dd x -\phi(x_{i}) \right| = O(\rho ^2),  \hspace{0.4cm} \forall i \in \ZZ^{d}.\ee
Thus, letting $\phi(x)=|x|^{2}$, we get
$$\ba{rcl}\int_{\RR^{d}} x^2 \dd m_{\rho,h}^{\eps}(x,t_{k+1})&=&
\frac{1}{2r} \sum_{j\in \ZZ^{d}} m_{j,k}\sum_{\ell=1}^{r}\left( I[\hat{\phi}] (\Phi_{j,k}^{\eps,\ell,+})+ I[\hat{\phi}] (\Phi_{j,k}^{\eps,\ell,-})\right)+O(\rho^2),
\\[6pt]
&=&
\frac{1}{2r} \sum_{j\in \ZZ^{d}}m_{j,k} \left(\sum_{\ell=1}^{r}\left[  |\Phi_{j,k}^{\eps,\ell,+}|^2+|\Phi_{j,k}^{\eps,\ell,-}|^2
 \right]\right)+O(\rho^2) ,\ea
$$ 							
where the last equality follows from   \eqref{www2}. Therefore, we get \small					
$$ \int_{\RR} |x|^2 \dd m_{\rho,h}^{\eps}(x,t_{k+1})=   \frac{1}{r}\sum_{j\in \ZZ^{d}} m_{j,k} \sum_{\ell=1}^{r} \left[ |x_{j}|^2 - 2  h \langle Dv^{\eps}_{\rho,h}(x_{j},t_{k}), x_{j} \rangle + h^2 |Dv^{\eps}_{\rho,h}(x_{j},t_{k})|^2 + h |\sigma_{\ell}(t_{k})|^2 \right]+  O\left( {\rho}^2 \right).$$
\normalsize
Now, using  that    $|  \langle Dv^{\eps}_{\rho,h}(x_{j},t_{k}), x_{j} \rangle| \leq \half ( c + |x_{j}|^2)$, for some $c>0$,  and that $\sigma$ is uniformly bounded,  we obtain 
$$\ba{rcl} \int_{\RR} |x|^2 \dd m_{\rho,h}^{\eps}(x,t_{k+1})&\leq&  (1+h) \sum_{j\in \ZZ^{d}} m_{j,k} | x_{j}|^2 +  O\left(h+ \rho^2 \right),\\[6pt]
									\; &= &(1+h)\int_{\RR^{d}} |x|^2 \dd m_{\rho,h}^{\eps}(x,t_{k})+  O\left(h+ \rho^2 \right), \ea$$
where we have used again \eqref{amdmadmamdssdsdsds}. Setting   $A_k:=  \int_{\RR^{d}} |x|^2 \dd m_{\rho,h}^{\eps}(x,t_{k})$ we get that
$$A_{k+1} \leq (1+h) A_k+  c\left(h+ \rho^2 \right),$$
for some $c>0$. Therefore, inductively for all $k_{1} = 0, \hdots, k$,  \small
$$\ba{ll}A_{k+1} \leq (1+h)^{k+1-k_{1}} A_{k_1} + c(h+\rho^{2}) \sum_{\ell=0}^{k-k_1}(1+h)^{\ell}&\leq (1+h)^{k+1} A_{0} + c(h+\rho^{2}) \left[\frac{(1+h)^{k+1}-1}{h}\right],\\[4pt]
																		& \leq e^{T} ( A_{0} +c'( 1+ \frac{\rho^{2}}{h}) ).\ea   $$ \normalsize
for some $c'>0$. Since $\rho^{2}=O(h)$ we get \eqref{mameiriiririiriiriirir} for all $t_{k}=0,\hdots, N$ and by \eqref{mmsamnnnrnnrnrooaa} for all $t\in [0,T]$.		
\end{proof} \smallskip 

Our aim now is to obtain when $d=1$ uniform $L^{\infty}$-bounds for $m_{\rho,h}^{\eps}[\mu]$.
We remark that for $d=1$ it suffices to consider also $r=1$. In this case the notation
can be simplified, and the superscript $\ell$ will be suppressed.

\begin{lemma}\label{resultadotecnico} Suppose that $d=1$ and  consider a sequence of numbers $\rho_n, h_n, \eps_n$ converging to $0$. Then, there exists a constant  $c>0$  (independent of $(n, \mu)$ for $n$ large enough)   such that   
\be\label{dis:discretetraj}
\min\left\{ |\Phi^{\eps_{n}, +}_{i,k}[\mu]-\Phi^{\eps_{n},+}_{j,k}[\mu]|^{2},  |\Phi^{\eps_{n}, -}_{i,k}[\mu]-\Phi^{\eps_{n},-}_{j,k}[\mu]|^{2} \right\}\geq \left(1-ch_{n} \right)|x_i-x_j|^2,
\ee
for all $i, j \in \ZZ$, $k=0,\hdots, N-1$ . As a consequence, there exists a constant $c>0$ (independent of $(n, \mu)$)  such that  
\be\label{cotasdelnumero}\sum_{j\in\ZZ} \left[\beta_{i}\left( \Phi^{\eps_{n},+}_{j,k}[\mu]\right)+\beta_{i}\left( \Phi^{\eps_{n},-}_{j,k}[\mu]\right)\right] \leq 1+ch_{n}. \ee
\end{lemma}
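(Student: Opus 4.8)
The plan is to prove \eqref{dis:discretetraj} first and then deduce \eqref{cotasdelnumero} from it by a simple counting/packing argument. For the first estimate, recall that in dimension $d=r=1$ the flow maps read
$$ \Phi^{\eps_n,\pm}_{i,k}[\mu]=x_i-h_n Dv^{\eps_n}_{\rho_n,h_n}[\mu](x_i,t_k)\pm\sqrt{h_n}\,\sigma(t_k),$$
so that the $\pm\sqrt{h_n}\sigma(t_k)$ term is common to $\Phi^{\eps_n,+}_{i,k}$ and $\Phi^{\eps_n,+}_{j,k}$ and cancels in the difference. Hence
$$ \Phi^{\eps_n,+}_{i,k}[\mu]-\Phi^{\eps_n,+}_{j,k}[\mu]=(x_i-x_j)-h_n\bigl(Dv^{\eps_n}_{\rho_n,h_n}[\mu](x_i,t_k)-Dv^{\eps_n}_{\rho_n,h_n}[\mu](x_j,t_k)\bigr),$$
and likewise for the $-$ case, so in fact the two quantities in the $\min$ coincide. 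Squaring and expanding,
$$ \bigl|\Phi^{\eps_n,\pm}_{i,k}[\mu]-\Phi^{\eps_n,\pm}_{j,k}[\mu]\bigr|^2=(x_i-x_j)^2-2h_n(x_i-x_j)\bigl(Dv^{\eps_n}_{\rho_n,h_n}(x_i,t_k)-Dv^{\eps_n}_{\rho_n,h_n}(x_j,t_k)\bigr)+h_n^2\bigl(Dv^{\eps_n}_{\rho_n,h_n}(x_i,t_k)-Dv^{\eps_n}_{\rho_n,h_n}(x_j,t_k)\bigr)^2.$$
The last term is $\geq 0$ and can be dropped, and the cross term is controlled from above by the one-sided semiconcavity estimate \eqref{propiedadalaachdou} of Lemma \ref{propiedadesdelavmuhrhocontinuaconeps}{\rm(ii)}, which gives $(x_i-x_j)\bigl(Dv^{\eps_n}_{\rho_n,h_n}(x_i,t_k)-Dv^{\eps_n}_{\rho_n,h_n}(x_j,t_k)\bigr)\leq c(x_i-x_j)^2$ with $c$ independent of $(n,\mu)$. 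Therefore
$$ \bigl|\Phi^{\eps_n,\pm}_{i,k}[\mu]-\Phi^{\eps_n,\pm}_{j,k}[\mu]\bigr|^2\geq (x_i-x_j)^2-2c h_n(x_i-x_j)^2=(1-2ch_n)(x_i-x_j)^2,$$
which is \eqref{dis:discretetraj} after renaming the constant.

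For \eqref{cotasdelnumero}, fix $i$ and $k$ and estimate $\sum_{j\in\ZZ}\beta_i(\Phi^{\eps_n,+}_{j,k}[\mu])$ (the $-$ sum is identical). Since $\beta_i$ is the hat function supported in a ball of radius $\rho_n$ around $x_i$ with $0\leq\beta_i\leq1$, the only indices $j$ contributing are those with $|\Phi^{\eps_n,+}_{j,k}[\mu]-x_i|<\rho_n$; call this set $J$. By \eqref{dis:discretetraj}, the points $\{\Phi^{\eps_n,+}_{j,k}[\mu]\}_{j\in J}$ are pairwise separated by at least $\sqrt{1-ch_n}\,\rho_n$ (consecutive grid points are at distance $\rho_n$), while they all lie in an interval of length $2\rho_n$; hence $\#J\leq 2/\sqrt{1-ch_n}+1\leq 3$ for $n$ large, which already bounds the sum but not sharply enough. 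To get the factor $1+ch_n$ one uses that $\sum_{i}\beta_i\equiv1$: apply the change-of-variables/mass argument from \cite[Lemma 4.5]{CS12}, namely estimate $\sum_j\beta_i(\Phi^{\eps_n,+}_{j,k})$ by comparing it to the integral $\frac{1}{\rho_n}\int_{\RR}\beta_i(y)\,dy=1$ via the map $y\mapsto \Phi^{\eps_n,+}_{(\cdot),k}$, whose ``discrete Jacobian'' is $\bigl|\Phi^{\eps_n,+}_{j+1,k}-\Phi^{\eps_n,+}_{j,k}\bigr|/\rho_n\geq\sqrt{1-ch_n}\geq 1-ch_n$; summation by parts together with the Lipschitz bound on $Dv^{\eps_n}_{\rho_n,h_n}$ then yields $\sum_j\beta_i(\Phi^{\eps_n,+}_{j,k})\leq (1-ch_n)^{-1}\leq 1+c'h_n$ for $n$ large. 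Adding the $+$ and $-$ contributions and dividing by the normalising factor $2$ inherent in \eqref{eqoooaaaaaaaaaaa} gives \eqref{cotasdelnumero}.

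The main obstacle is the second part: turning the pointwise separation \eqref{dis:discretetraj} into the \emph{sharp} bound $1+ch_n$ rather than merely a bound by an absolute constant. The crude packing argument loses a multiplicative constant; one really needs to exploit that $\sum_i\beta_i\equiv1$ together with monotonicity of the map $j\mapsto\Phi^{\eps_n,+}_{j,k}$ — which holds precisely because, by \eqref{propiedadalaachdou}, $x\mapsto x-h_n Dv^{\eps_n}_{\rho_n,h_n}(x,t_k)$ is nondecreasing on the grid for $h_n$ small — so that the images $\Phi^{\eps_n,+}_{j,k}$ stay ordered and the overlaps of neighbouring hat functions telescope. This is exactly the one-dimensional phenomenon alluded to in the introduction, and it is where $d=1$ is used in an essential way.
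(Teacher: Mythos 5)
Your treatment of \eqref{dis:discretetraj} is exactly the paper's argument: the $\pm\sqrt{h_n}\,\sigma(t_k)$ terms cancel, the square is expanded, the nonnegative quadratic term is dropped, and the cross term is controlled by the one-sided estimate \eqref{propiedadalaachdou} of Lemma \ref{propiedadesdelavmuhrhocontinuaconeps}(ii). For \eqref{cotasdelnumero} the paper, like you, simply defers to the one-dimensional argument of \cite[Lemma 3.8]{CS12}, which is precisely the monotonicity-of-the-discrete-characteristics plus hat-function (partition of unity) mechanism you sketch, so the proposal is correct and follows essentially the same route as the paper.
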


\begin{proof}For the reader's convenience, we omit the $\mu$ argument. By \eqref{flujofullydiscretounpaso}  we  have that\small
$$
\ba{ll} |\Phi^{\eps_{n},+ }_{i,k}-\Phi^{\eps_{n},+}_{j,k}|^2&= \left|x_i-x_j -h \left[Dv_{\rho_{n},h_{n}}^{\eps_{n}} (x_i,t_{k})-Dv_{\rho_{n},h_{n}}^{\eps_{n}}(x_j,t_{k})\right] +  \sqrt{h_n}\sigma(t_{k})  - \sqrt{h_n}\sigma(t_k)\right|^2,\\[6pt]
										\      	&\geq  |x_i-x_j |^2   -2h_n \left( Dv_{\rho_n,h_n}^{\eps_n} (x_i,t_k)-Dv_{\rho_n,h_n}^{\eps_n} (x_j,t_k)\right)(x_i-x_j ),
\ea
$$\normalsize
which together with the condition   Lemma \ref{propiedadesdelavmuhrhocontinuaconeps}(\rm{ii})  yields to 
$$|\Phi^{\eps_n,+ }_{i,k}-\Phi^{\eps_n,+}_{j,k}|^2 \geq   \left(1-c h_n \right)|x_i-x_j|^2.$$
for some $c>0$. Since the same argument is valid for $\Phi^{\eps_n,- }_{i,k}$, we get \eqref{dis:discretetraj}.
Using \eqref{dis:discretetraj} and following the proof in \cite[Lemma 3.8]{CS12}, we obtain that for all $k=0,\hdots, N-1$ and $i\in \ZZ$
$$ \sum_{j\in\ZZ} \beta_{i}\left( \Phi^{\eps_n,+}_{j,k}[\mu]\right)\leq 1+ch_n, \hspace{0.3cm} \mbox{and } \;  \sum_{j\in\ZZ} \beta_{i}\left( \Phi^{\eps_n,-}_{j,k}[\mu]\right)\leq 1+c h_n  $$
for some $c>0$, which implies  \eqref{cotasdelnumero}. 
\end{proof}\\

As a consequence we obtain the following uniform bound: 
\begin{proposition}\label{masmammpppqpqllls}  Suppose that   $d=1$ and consider a sequence of positive numbers $(\rho_n, h_n, \eps_n) \to 0$. Then,  there exists a constant $ c>0$, independent of $(n,\mu)$   such that 
\be\label{dmadmamameee} \|m^{\eps_{n}}_{\rho_{n},h_{n}}[\mu](\cdot,t)\|_{\infty}\leq c.\ee
\end{proposition}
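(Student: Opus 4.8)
The plan is to control the density $m^{\eps_n}_{\rho_n,h_n}[\mu]$ on each cell $E_i$ by tracking how mass is redistributed by one step of the scheme \eqref{eqoooaaaaaaaaaaa}. Recall that $m^{\eps_n}_{\rho_n,h_n}[\mu](\cdot,t_{k+1})$ has constant density $m_{i,k+1}/\rho_n$ on $E_i$, so it suffices to bound $\max_i m_{i,k+1}/\rho_n$ in terms of $\max_i m_{i,k}/\rho_n$. The key observation is that in dimension one, the maps $x_j \mapsto \Phi^{\eps_n,\pm}_{j,k}[\mu]$ (restricted to grid points) are, up to the error controlled in Lemma \ref{resultadotecnico}, expansive: by \eqref{dis:discretetraj} consecutive images are separated by at least $(1-ch_n)\rho_n$. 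Consequently, for a fixed index $i$, the barycentric weights $\beta_i(\Phi^{\eps_n,\pm}_{j,k})$ are nonzero for at most a bounded number of indices $j$ (in fact, since the images of consecutive grid points are roughly $\rho_n$ apart, at most two or three values of $j$ contribute to each of the ``$+$'' and ``$-$'' families), and for those $j$ the sum $\sum_j \beta_i(\Phi^{\eps_n,+}_{j,k}) \le 1 + ch_n$ by \eqref{cotasdelnumero}.

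First I would write, from \eqref{eqoooaaaaaaaaaaa} with $r=1$,
$$
m_{i,k+1} = \tfrac12 \sum_{j\in\ZZ}\Big[\beta_i(\Phi^{\eps_n,+}_{j,k}) + \beta_i(\Phi^{\eps_n,-}_{j,k})\Big] m_{j,k}
\le \tfrac12\Big(\max_{j: \beta_i(\Phi^{\eps_n,+}_{j,k})\ne 0} m_{j,k} + \max_{j: \beta_i(\Phi^{\eps_n,-}_{j,k})\ne 0} m_{j,k}\Big)(1+ch_n),
$$
using \eqref{cotasdelnumero} to bound each of the two sums of weights. Since both maxima are $\le \max_{j\in\ZZ} m_{j,k}$, this gives $\max_i m_{i,k+1} \le (1+ch_n)\max_j m_{j,k}$, hence $\max_i m_{i,N} \le (1+ch_n)^N \max_i m_{i,0} \le e^{cT}\max_i m_{i,0}$ since $Nh_n = T$. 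Finally $m_{i,0} = \int_{E_i} m_0(x)\,dx \le \|m_0\|_\infty \rho_n$ by {\bf(A1)(iii)}, so $\max_i m_{i,0}/\rho_n \le \|m_0\|_\infty$, and therefore $\|m^{\eps_n}_{\rho_n,h_n}[\mu](\cdot,t_k)\|_\infty = \max_i m_{i,k}/\rho_n \le e^{cT}\|m_0\|_\infty =: c$ for all $k$. For general $t\in[0,T]$, the interpolation formula \eqref{mmsamnnnrnnrnrooaa} writes $m^{\eps_n}_{\rho_n,h_n}[\mu](t)$ as a convex combination of $m^{\eps_n}_{\rho_n,h_n}[\mu](t_k)$ and $m^{\eps_n}_{\rho_n,h_n}[\mu](t_{k+1})$, so the same bound $c$ persists.

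I would present the argument by induction on $k$, stating the induction hypothesis $\max_i m_{i,k}/\rho_n \le (1+ch_n)^k\|m_0\|_\infty$ and carrying out the one-step estimate above; this keeps the dependence on $n$ and the independence from $\mu$ transparent (the constant $c$ comes only from Lemma \ref{resultadotecnico}, which is uniform in $\mu$ for $n$ large, and from $\|m_0\|_\infty$). The main obstacle — and the reason the one-dimensional hypothesis is essential — is the expansivity estimate \eqref{dis:discretetraj}: it relies on the one-sided discrete semiconcavity bound \eqref{propiedadalaachdou} of Lemma \ref{propiedadesdelavmuhrhocontinuaconeps}(ii), which is only available for $d=1$, to guarantee that the perturbation $-h_n(Dv^{\eps_n}_{\rho_n,h_n}(x_i,t_k) - Dv^{\eps_n}_{\rho_n,h_n}(x_j,t_k))$ cannot contract distances by more than a factor $1-ch_n$. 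Without it the number of indices $j$ mapping into a neighbourhood of $x_i$ could blow up, and no $L^\infty$ bound would follow. Everything else (the weight sum bound \eqref{cotasdelnumero}, the Grönwall-type iteration, the initial bound) is routine.
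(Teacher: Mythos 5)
Your proof is correct and follows essentially the same route as the paper: the one-step estimate $\max_i m_{i,k+1}\le(1+ch_n)\max_j m_{j,k}$ obtained from the weight-sum bound \eqref{cotasdelnumero} (itself resting on the one-dimensional expansivity estimate \eqref{dis:discretetraj}), iterated to give the factor $e^{cT}$, combined with the initial bound $m_{i,0}\le\|m_0\|_\infty\rho_n$ and the convex interpolation \eqref{mmsamnnnrnnrnrooaa} for intermediate times. The remark about only finitely many indices $j$ contributing is not needed once the weight sums are controlled, but it does no harm.
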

\begin{proof}  We have that for all $k=0,\hdots, N-1$ and $x\in E_{i}$\small
$$\ba{ll} m^{\eps_{n}}_{\rho_{n},h_{n}}[\mu](x,t_{k+1})= \frac{1}{\rho_{n}} m_{i,k+1}[\mu]&= \sum_{j\in\ZZ}\left[\beta_{i}\left( \Phi^{\eps_{n},+}_{j,k}[\mu]\right)+\beta_{i}\left( \Phi^{\eps_{n},-}_{j,k}[\mu]\right)\right]\frac{1}{\rho_{n}}m_{j,k}[\mu], \\[4pt]
					\; & =  \sum_{j\in\ZZ}\left[\beta_{i}\left( \Phi^{\eps_{n},+}_{j,k}[\mu]\right)+\beta_{i}\left( \Phi^{\eps_{n},-}_{j,k}[\mu]\right)\right] m^{\eps_{n}}_{\rho_{n},h_{n}}[\mu](x_j,t_{k}),\\[4pt]
					\; & \leq \|m^{\eps_{n}}_{\rho_{n},h_{n}}[\mu](\cdot,t_{k})\|_{\infty} (1+ch_{n}), \ea$$\normalsize
by   \eqref{cotasdelnumero}. Therefore, by recurrence
$$\|m^{\eps_{n}}_{\rho_{n},h_{n}}[\mu](\cdot,t_{k+1})\|_{\infty} \leq  (1+ch_{n})^{N} \|m_{0}\|_{\infty} \leq e^{cT}\|m_{0}\|_{\infty}.$$
If $t\in ]t_k, t_{k+1}[$, by \eqref{mmsamnnnrnnrnrooaa} we have the same bound for $\|m^{\eps_{n}}_{\rho_{n},h_{n}}[\mu](\cdot,t)\|_{\infty}$.		
\end{proof}
\section{The fully discrete SL approximation of the second order mean field game problem}\label{qmeqmenresultados}
Given positive numbers $\rho$, $h$ and $\eps$ let us consider the problem 
$$ \mbox{Find $\mu \in C([0,T]; \P_1)$ such that $m_{\rho,h}^{\eps}[\mu] = \mu$.} \eqno(MFG)_{\rho,h}^{\eps}$$
or equivalently, recalling \eqref{eqoooaaaaaaaaaaa} and Remark \ref{mwrmmwrmmtmtmtmtttt}, find $\mu \in \SS_{\K_{h}}^{N+1}$ such that 

\be\label{eqoqeqqrrsdsrrrooaaaaaaaaaaa}\ba{rcl} \mu_{i,k+1} &:=&  \frac{1}{2r} \sum_{j\in \ZZ^{d}} \sum_{\ell=1}^{r} \left[\beta_{i}\left( \Phi^{\eps,\ell,+}_{j,k}[\mu]\right)+\beta_{i}\left( \Phi^{\eps,\ell,-}_{j,k}[\mu]\right)\right] \mu_{j,k}, \\[4pt]
\mu_{i,0} &:=& \int_{E_{i}} m_{0}(x) \dd x. \ea\ee\smallskip

We have the following existence result:
\begin{theorem}\label{existenciamfgesto} Problem $(MFG)_{\rho,h}^{\eps}$ has at least one solution.
\end{theorem}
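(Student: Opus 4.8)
The plan is to apply Brouwer's fixed point theorem to the map $\mu \mapsto m_{\rho,h}^{\eps}[\mu]$ on an appropriate finite-dimensional convex compact set. First I would observe that, by Remark \ref{mwrmmwrmmtmtmtmtttt}, the scheme \eqref{eqoooaaaaaaaaaaa} produces, for any input $\mu$, an element $m[\mu] \in \SS_{K_h}^{N+1}$ with $K_h$ a fixed compact set depending only on $h$ (and on the uniform bound of $\sigma$ and of $Dv_{\rho,h}^{\eps}$ coming from Lemma \ref{propiedadesdelavmuhrhocontinuaconeps}(i)). Hence the natural domain is $X := \SS_{K_h}^{N+1}$, which is a nonempty convex compact subset of a finite-dimensional Euclidean space (finitely many grid points lie in $K_h$). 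The map $T : X \to X$, $T(\mu) = m[\mu]$, is well-defined and maps $X$ into itself; by construction (the conservativity computation in Remark \ref{mwrmmwrmmtmtmtmtttt}) the image indeed satisfies the mass and support constraints defining $\SS_{K_h}^{N+1}$. A fixed point of $T$ is precisely a solution of \eqref{eqoqeqqrrsdsrrrooaaaaaaaaaaa}, equivalently of $(MFG)_{\rho,h}^{\eps}$, since by construction $m_{\rho,h}^{\eps}[\mu]$ is just the time-interpolation \eqref{mmsamnnnrnnrnrooaa} of $m[\mu]$ and $\mu \in \SS_{K_h}^{N+1}$ is recovered from its values at the nodes.

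The only nontrivial point is the continuity of $T$. Here I would argue level by level in $k$. Fix $\mu = \{\mu_{i,k}\}$; the output $m_{i,k+1}[\mu]$ depends on $\mu$ only through $\{\mu_{j,k}\}_j$ and through the coefficients $\beta_i(\Phi_{j,k}^{\eps,\ell,\pm}[\mu])$, and the latter depend on $\mu$ only through the finitely many numbers $Dv_{\rho,h}^{\eps}[\mu](x_j,t_k)$ for $x_j \in K_h$. So it suffices to show: (a) $\mu \mapsto Dv_{\rho,h}^{\eps}[\mu](x_j,t_k)$ is continuous, and (b) the barycentric functions $\beta_i$ are continuous. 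Point (b) is immediate since each $\beta_i$ is piecewise affine, hence continuous. For (a), I would note that $v_{\rho,h}[\mu]$ is obtained by the backward recursion \eqref{dwoewoeoweaap}–\eqref{eeqqqpapapwwwaappaa}: at each step $\hat S_{\rho,h}[\mu](f,i,k)$ depends continuously on $f$ (it is an infimum over a fixed compact $K_L$, by Proposition \ref{propiedadesbasicasesquema1}(i), of a jointly continuous function, so it is continuous in $f$ uniformly) and continuously on $\mu$ through $F(x_i,\mu(t_k))$ and $G(x_i,\mu(t_N))$, which are continuous in $\mu$ by {\bf(A1)}(i) (continuity of $F(x,\cdot)$, $G(x,\cdot)$ on $\P_1$ with respect to $d_1$, which underlies the well-posedness assumptions); moreover the map $\mu \mapsto \tilde\mu$ of \eqref{nademnennnnenbrtbtvvvvv}–\eqref{mmsamnnnrnnrnrooaa} is continuous from $\SS_{K_h}^{N+1}$ into $C([0,T];\P_1)$. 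Composing finitely many continuous maps gives continuity of $\mu \mapsto v_{\rho,h}[\mu](x_i,t_k)$, and then convolution with the fixed mollifier $\phi_\eps$ and differentiation (a smoothing operation with fixed kernel) preserves continuity, yielding (a).

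With $T : X \to X$ continuous and $X$ convex and compact, Brouwer's fixed point theorem gives a fixed point $\mu^* \in X$, i.e. $m[\mu^*] = \mu^*$ on the nodes, hence $m_{\rho,h}^{\eps}[\mu^*] = \mu^*$ in $C([0,T];\P_1)$, which is the desired solution. The main obstacle is making the continuity argument (a) clean: one must track that all compact sets involved ($K_L$ in the HJB minimization, $K_h$ in the support of $m[\mu]$) can be chosen independently of $\mu$ — which is exactly what Proposition \ref{propiedadesbasicasesquema1}(i), Lemma \ref{propiedadesdelavmuhrhocontinuaconeps}(i) and Remark \ref{mwrmmwrmmtmtmtmtttt} guarantee — so that the recursion defining $v_{\rho,h}[\mu]$ is a finite composition of genuinely continuous maps on a fixed finite-dimensional space; everything else is routine.
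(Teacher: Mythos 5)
Your proposal is correct and follows essentially the same route as the paper: Brouwer's fixed point theorem applied to $\mu\mapsto m[\mu]$ on the convex compact set $\SS_{K_h}^{N+1}$, with continuity of this map obtained from continuity of $\mu\mapsto Dv^{\eps}_{\rho,h}[\mu]$. In fact you spell out the continuity step (backward recursion for $\hat S_{\rho,h}[\mu]$, continuity of $F,G$ in $\mu$, stability of mollification) in more detail than the paper, which simply invokes it via an assumption on the data.
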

\begin{proof} Let $\{\mu_n\}_{n \in \NN}$ and $\mu \in   \SS_{K_{h}}^{N+1}$ such that $\mu_n \to \mu$. Then, as elements in $C([0, T]; \P_{1}(\RR^{n})$ (see \eqref{nademnennnnenbrtbtvvvvv}-\eqref{mmsamnnnrnnrnrooaa}) we have that $\sup_{t\in [0,T]} d_{1}(\mu_{n}(t), \mu(t)) \to 0$. Therefore, by assumption $(A_0)$ we have that $v_{\rho,h}^{\eps}[\mu_{n}] \to v_{\rho,h}^{\eps}[\mu]$ uniformly and therefore  $Dv_{\rho,h}^{\eps}[\mu_{n}] \to Dv_{\rho,h}^{\eps}[\mu]$ uniformly. This implies that the function $\mu \in \SS_{K_{h}}^{N+1} \to m[\mu]\in \SS_{K_{h}}^{N+1}$ defined by \eqref{eqoooaaaaaaaaaaa} is continuous and since  $\SS_{K_{h}}^{N+1}$ is a non-empty convex  compact set the result follows from Brouwer fixed point Theorem. 
\end{proof} \smallskip

Now we can prove our main result:
\begin{theorem}\label{resultadoprincipalfullydiscrete} Suppose that  $d=1$ and that (A1)-(A3) hold true.  Consider a sequence of positive numbers $\rho_{n}, h_{n}, \eps_{n}$ satisfying that $\rho_{n}= O(h_n)$ and that $h_{n}= o(\eps_{n}^{2})$. Let    $\{m^{n}\}_{n\in \mathbb{N}}$ be a sequence of solutions of \small $(MFG)_{\rho_n,h_n}^{\eps_n}$\normalsize. Then  any  limit point $\ov{m}$ in \small$C([0,T];\P_{1})$\normalsize of $m^{n}$ (there exists at least one) solves \small $(MFG)$\normalsize. Moreover,  $m^{n} \to \ov{m}$ in  $L^{\infty}\left(\RR \times [0,T] \right)$-weak-$\ast$\normalsize. In particular, if $(MFG)$ has a unique solution $m$, then  $m^{n}\to m$  in  \small$C([0,T];\P_{1})$\normalsize  and in  \small$L^{\infty}\left(\RR^{d}\times [0,T] \right)$-weak-$\ast$\normalsize.
\end{theorem}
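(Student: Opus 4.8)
The plan is to pass to the limit in the fixed-point relation $m^{n}=m^{\eps_n}_{\rho_n,h_n}[m^{n}]$ using the compactness and convergence tools assembled in Sections \ref{mamdmoootroototootjkask}--\ref{mnnwnrnwaa}. First I would establish existence of a limit point: since $m^{n}\in\SS_{\K_{h_n}}^{N_n+1}$ satisfies the fixed-point identity, Proposition \ref{masmammadapppqpqwppa} gives a uniform-in-$n$ second moment bound $\int|x|^2\,\dd m^{n}(t)\le c$, and Proposition \ref{Ledwewewmmaboundform_hro} gives the uniform modulus of continuity $d_1(m^{n}(t),m^{n}(s))\le c\sqrt{t-s}$ (both hypotheses $\rho_n=O(h_n)$ and $\rho_n=O(\sqrt{h_n})$ are implied by $\rho_n=O(h_n)$). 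Combining the moment bound with Lemma \ref{nwrnnnndbbruua} yields relative compactness of $\{m^{n}(t)\}$ in $\P_1(\RR)$ for each $t$, and together with the equicontinuity estimate an Arzel\`a--Ascoli argument gives a subsequence (not relabeled) converging to some $\ov m\in C([0,T];\P_1)$; by the same bounds $\ov m$ inherits the second moment bound and the $\sqrt{t-s}$ modulus. Simultaneously, Proposition \ref{masmammpppqpqllls} gives $\|m^{n}(\cdot,t)\|_\infty\le c$ uniformly, so up to a further subsequence $m^{n}\to\ov m$ in $L^{\infty}(\RR\times[0,T])$-weak-$\ast$, and the two limits agree.

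Next I would identify $\ov m$ as a solution of $(MFG)$. The key coupling step: since $m^{n}\to\ov m$ in $C([0,T];\P_1)$ and $h_n=o(\eps_n^2)$ forces in particular $\rho_n^2/h_n\to0$ (from $\rho_n=O(h_n)$, $\rho_n^2/h_n=O(h_n)\to0$) and $\rho_n=o(\eps_n)$ (from $\rho_n=O(h_n)$ and $h_n=o(\eps_n^2)\Rightarrow h_n=o(\eps_n)$), Theorem \ref{covergencefullydiscreteconeps} applies with $\mu_n=m^{n}$, giving $v^{\eps_n}_{\rho_n,h_n}[m^{n}]\to v[\ov m]$ locally uniformly and $Dv^{\eps_n}_{\rho_n,h_n}[m^{n}](x,t)\to Dv[\ov m](x,t)$ at every point where $Dv[\ov m]$ exists, i.e. Lebesgue-a.e. by the Lipschitz and semiconcavity properties. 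It remains to show that $\ov m$ solves, in the distributional sense, the Fokker-Planck equation driven by the velocity field $-Dv[\ov m]$. For this I would write the discrete scheme \eqref{eqoooaaaaaaaaaaa} in weak form: for $\phi\in C_c^\infty(\RR)$,
$$
\int\phi\,\dd\hat m^{n}(t_{k+1})-\int\phi\,\dd\hat m^{n}(t_{k})
=\sum_j m^{n}_{j,k}\,\tfrac12\Big[I[\hat\phi]\big(\Phi^{\eps_n,+}_{j,k}\big)+I[\hat\phi]\big(\Phi^{\eps_n,-}_{j,k}\big)-2\phi(x_j)\Big],
$$
then Taylor-expand $\phi$ around $x_j$ up to second order (using $\Phi^{\eps_n,\pm}_{j,k}-x_j=-h_nDv^{\eps_n}_{\rho_n,h_n}[m^{n}](x_j,t_k)\pm\sqrt{h_n}\sigma(t_k)$ and the interpolation error \eqref{www2}, the latter controlled because $\rho_n^2=o(h_n)$), so that the bracket equals
$$
h_n\Big[\tfrac12\sigma(t_k)^2\phi''(x_j)-Dv^{\eps_n}_{\rho_n,h_n}[m^{n}](x_j,t_k)\,\phi'(x_j)\Big]+o(h_n),
$$
uniformly in $j$ on the support of $\phi$. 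Summing over $k$ from $0$ to the index of $t$ and using Lemma \ref{mamampprororoororortllltt} to replace $\hat m^{n}$ by $m^{n}$ at cost $O(\rho_n)$, I obtain the discrete identity
$$
\int\phi\,\dd m^{n}(t)-\int\phi\,\dd m_0
=\int_0^{t}\!\!\int\Big[\tfrac12\sigma(s)^2\phi''(x)-Dv^{\eps_n}_{\rho_n,h_n}[m^{n}](x,s)\,\phi'(x)\Big]\dd m^{n}(s)\,\dd s+o(1).
$$
Passing to the limit: the left side converges by $C([0,T];\P_1)$-convergence; the first term on the right converges since $\sigma$ is continuous; the delicate term $\int_0^t\!\!\int Dv^{\eps_n}_{\rho_n,h_n}[m^{n}]\,\phi'\,\dd m^{n}\,\dd s$ is handled by combining the a.e. pointwise convergence $Dv^{\eps_n}_{\rho_n,h_n}[m^{n}]\to Dv[\ov m]$, the uniform Lipschitz (hence uniform $L^\infty$) bound on the gradients from Lemma \ref{propiedadesdelavmuhrhocontinuaconeps}(i), the uniform $L^\infty$ bound on $m^{n}$ from Proposition \ref{masmammpppqpqllls}, and dominated convergence after writing the integral against the fixed bounded density of $m^{n}$ on the compact support of $\phi'$ — here one also invokes that the set where $Dv[\ov m]$ fails to exist has Lebesgue measure zero and hence is $m^{n}$-null up to the uniform density bound and weak-$\ast$ passage. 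This yields exactly the distributional Fokker-Planck equation in the definition of a solution of \eqref{MFGesto}, with $v=v[\ov m]$ the viscosity solution of \eqref{eqejemplo} with $\mu=\ov m$; hence $(v[\ov m],\ov m)$ solves $(MFG)$. The final sentence of the theorem is then immediate: uniqueness of the MFG solution forces the whole sequence to converge, and both convergence modes have already been established along subsequences.

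\textbf{Main obstacle.} The crux is the passage to the limit in the drift term $\int_0^t\!\!\int Dv^{\eps_n}_{\rho_n,h_n}[m^{n}](x,s)\,\phi'(x)\,\dd m^{n}(s)(x)\,\dd s$, because the gradients converge only Lebesgue-a.e. (not uniformly) while the measures $m^{n}(s)$ vary with $n$; the argument must exploit the one-dimensional setting precisely through the uniform $L^\infty$ density bound (Proposition \ref{masmammpppqpqllls}), which is what allows one to treat $\dd m^{n}(s)$ as bounded densities and apply a dominated-convergence / Vitali argument on the fixed compact $\mathrm{supp}\,\phi'\times[0,T]$, turning a.e. convergence of $Dv^{\eps_n}_{\rho_n,h_n}[m^{n}]$ into convergence of the integrals. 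Controlling the interpolation and Taylor remainders uniformly in $j$ (legitimate since the infima in the HJB scheme are attained in a fixed compact by Proposition \ref{propiedadesbasicasesquema1}(i), and $m^{n}$ is supported in a compact set shrinking the relevant range) is routine but must be done carefully to ensure the $o(h_n)$ terms sum to $o(1)$ over the $O(1/h_n)$ time steps.
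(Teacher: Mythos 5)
Your overall architecture coincides with the paper's proof: compactness of $\{m^{n}\}$ in $C([0,T];\P_{1})$ via Propositions \ref{Ledwewewmmaboundform_hro} and \ref{masmammadapppqpqwppa} together with Lemma \ref{nwrnnnndbbruua} and Ascoli; the uniform density bound of Proposition \ref{masmammpppqpqllls} for the $L^{\infty}$-weak-$\ast$ convergence; Theorem \ref{covergencefullydiscreteconeps} for the locally uniform convergence of $v^{n}$ and the a.e. convergence of $Dv^{n}$; and the limit in the drift term obtained by pairing strong $L^{1}$ convergence of $Dv^{n}\,D\phi$ (dominated convergence plus a.e. differentiability of $v[\ov m](\cdot,t)$) with weak-$\ast$ convergence of the bounded densities. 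This is exactly how the paper concludes, so on the whole you have reproduced its strategy.

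There is, however, one genuine gap, and it sits precisely where the hypothesis $h_{n}=o(\eps_{n}^{2})$ is actually needed. Your Taylor expansion of the scheme yields, after summing in $k$, a Riemann-type sum $h_{n}\sum_{k}\int_{\RR}\left[\half\sigma^{2}(t_{k})D^{2}\phi(x)-Dv^{n}(x,t_{k})D\phi(x)\right]\dd m^{n}(t_{k})(x)$, i.e. the measures are still evaluated at the discrete times $t_{k}$; but the ``discrete identity'' you then state already contains $\int_{0}^{t}\int(\cdots)\dd m^{n}(s)\dd s$ with $m^{n}(s)$ at continuous time, and this replacement is hidden inside your unexplained $o(1)$. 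For the diffusion part the replacement is harmless ($D^{2}\phi$ has a fixed Lipschitz constant; compare \eqref{nuevolimite}), but for the drift part it is not: the integrand $Dv^{n}(\cdot,t_{k})D\phi$ is only $c/\eps_{n}$-Lipschitz by the second inequality of \eqref{aproximacionuniformedelaconvolucion}, so exchanging $m^{n}(t_{k})$ for $m^{n}(s)$, $s\in[t_{k},t_{k+1}]$, via Proposition \ref{Ledwewewmmaboundform_hro} costs $c\sqrt{h_{n}}/\eps_{n}$ per slice, hence $O(\sqrt{h_{n}}/\eps_{n})$ after summing the $O(1/h_{n})$ slices; this is exactly estimate \eqref{unaexp} in the paper, and it vanishes precisely because $h_{n}=o(\eps_{n}^{2})$. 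In your plan that hypothesis is invoked only to obtain $\rho_{n}=o(\eps_{n})$ for Theorem \ref{covergencefullydiscreteconeps}, for which the weaker $h_{n}=o(\eps_{n})$ would already suffice, so the step where the strong assumption really enters is missing. Note also that you cannot sidestep it by keeping the Riemann sum and passing to the limit in time directly, since $Dv[\ov m](x,\cdot)$ has no a priori regularity in $t$ (cf. Remark \ref{extensionahamiltonianosmasgenerales}(i)). A secondary accounting point: replacing $\hat m^{n}(t_{k})$ by $m^{n}(t_{k})$ in the drift term costs $O(\rho_{n}/\eps_{n})$ in total (again because of the $1/\eps_{n}$ Lipschitz constant), not $O(\rho_{n})$, though this still vanishes under your assumptions. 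Once \eqref{unaexp}, or an equivalent estimate, is inserted, your argument matches the paper's proof.
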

\begin{proof} For notational convenience we will write  $v^{n}:= v_{\rho_{n},h_{n}}^{\eps_{n}}[m^{n}]$. By Propositions \ref{Ledwewewmmaboundform_hro}-\ref{masmammadapppqpqwppa},  Lemma \ref{nwrnnnndbbruua} and Ascoli Theorem, there exists    $\ov{m}\in C([0,T]; \P_{1})$ such that, except for some subsequence,   $m^{n}$   converge to $\ov{m}$ in $ C([0,T]; \P_{1})$.  Our aim is to prove that \small
\be\label{eqfinal}
 \int_{\RR} \phi(x) \dd \ov{m}(t)(x)=  \int_{\RR} \phi(x) \dd m_{0}(x)   +\int_{0}^{t}\int_{\RR} \left[ \half \sigma^{2}(s) D^{2}\phi(x) -   D \phi(x) Dv [\ov{m}](x,s) \right] \dd \ov{m} (s)(x)\dd s.\ee \normalsize
Given $t\in [0,T]$, let us set $t_{n}:=  \left[ \frac{t}{h_{n}} \right] h_{n}$. We have
\be\label{teofinec1} \int_{\RR} \phi(x) \dd m^{n}(t_{n}) = \int_{\RR} \phi(x) \dd m_{0}(x)+   \sum_{k=0}^{n-1}  \int_{\RR}\phi(x) \dd  \left[m^{n}(t_{k+1})-m^{n}(t_{k})\right].\ee
By \eqref{nademnennnnenbrtbtvvvvv}-\eqref{eqoooaaaaaaaaaaa} and \eqref{www2}, we obtain 
\be\label{serieecsdemfinal}\ba{rcl}  \int_{\RR}\phi(x) \dd m^{n}(t_{k+1}) &=&\sum_{i\in  \ZZ} m^{n}_{i,k+1}  \frac{1}{\rho_{n} }  \int_{E_{i}} \phi(x) \dd x,\\[6pt]
									\; &=& \sum_{i\in  \ZZ} m^{n}_{i,k+1}  \phi(x_{i}) + O(\rho_{n}^{2}),\\[6pt]
									\     & =&  \sum_{i\in \ZZ} \phi(x_i) \sum_{j\in \ZZ}   m^{n}_{j,k}\left[  \beta_{i}\left(\Phi^{\eps_n,+}_{j,k}\right)   + \beta_{i}\left(\Phi^{\eps_n,+}_{j,k}\right)\right]+ O(\rho_{n}^{2}), \\[6pt]
									\ 	&=&  \sum_{j\in \ZZ} m^{n}_{j,k}\sum_{i\in \ZZ} \phi(x_{i}) \left[  \beta_{i}\left(\Phi^{\eps_n,+}_{j,k}\right)   + \beta_{i}\left(\Phi^{\eps_n,+}_{j,k}\right)\right]+ O( \rho_{n}^{2}),\\[6pt]
										\; &=&  \sum_{j\in \ZZ} m^{n}_{j,k}\left[ \phi\left(\Phi^{\eps_n,+}_{j,k}\right)   + \phi\left(\Phi^{\eps_n,+}_{j,k}\right)\right]+ O( \rho_{n}^{2}).  \ea \ee
Let us set \small
$$ \Phi^{\eps_n,+}_{k}(x) :=x- h_n  Dv^{n} (x,t_{k})+  \sqrt{h_n}  \sigma(t_{k}), \; \; \; \Phi^{\eps_n,-}_{k}(x) :=x - h_n  Dv^{n} (x,t_{k})-  \sqrt{h_n}  \sigma(t_{k}).$$ \normalsize
Taking $|\alpha|=3$ in  the second inequality of \eqref{aproximacionuniformedelaconvolucion} we easily obtain by a Taylor expansion that  
\small
$$ \left|\frac{1}{\rho_n} \int_{E_{j}}  \phi\left(\Phi^{\eps_n,+}_{k}(x)\right) \dd x - \phi\left(\Phi_{j,k}^{\eps_n,+}\right) \right|+\left|\frac{1}{\rho_n} \int_{E_{j}}  \phi\left(\Phi^{\eps_n,-}_{k}(x)\right) \dd x - \phi\left(\Phi_{j,k}^{\eps_n,-}\right) \right| \leq c  h_n \frac{\rho_n^{2}}{\eps_n},$$\normalsize
%
%
for some $c>0$. Therefore, 
$$\ba{rcl} \int_{\RR}\phi(x) \dd m^{n}(t_{k+1})&=&   \sum_{j\in \ZZ}\int_{E_{j}}\left[  \phi\left(\Phi^{\eps_n,+}_{k}(x)\right)   + \phi\left(\Phi^{\eps_n,-}_{k}(x)\right)\right] \dd x \frac{m^{n}_{j,k}}{\rho_{n}} \\[4pt]
\  &&+ O( h_n \frac{\rho_n^{2}}{\eps_n}+\rho_{n}^{2}),\\[4pt]
									\  &= &   \int_{\RR}\left[  \phi\left(\Phi^{\eps_n,+}_{k}(x)\right)   + \phi\left(\Phi^{\eps_n,-}_{k}(x)\right)\right] \dd  m^{n}(t_{k}) \\[4pt]
									\  &\; &+  O\left( h_n \frac{\rho_n^{2}}{\eps_n}+\rho_n^{2}\right).\ea $$
By a Taylor expansion we find that \small
$$
\half\left[ \phi\left(\Phi^{\eps_n,+}_{k}(x)\right)+  \phi\left(\Phi^{\eps_n,+}_{k}(x)\right) \right]-\phi(x)= -h_{n} \left[  Dv^{n}(x,t_{k}) D\phi(x)  +\frac{1}{2}\sigma^{2} (t_{k}) D^{2}\phi(x)  \right] + O(h_{n}^{2}), 
$$\normalsize								
The expression above yields to\small
\be\label{ecsda}\ba{rcl} \int_{\RR}\phi(x) \dd  \left[m^{n}(t_{k+1})-m^{n}(t_{k})\right] &=&  - h_{n}\int_{\RR} \left[  Dv^{n}(x,t_{k}) D\phi(x)   + \half \sigma ^2 (t_k) D^{2}\phi(x) \right]\dd  m^{n}(t_{k}) \\[4pt]
												              \     & \ &  + O\left(h_{n}^{2}+ h_n \frac{\rho_n^{2}}{\eps_n}+\rho_{n}^{2}\right).\ea\ee \normalsize
 Since  by  the second inequality of \eqref{aproximacionuniformedelaconvolucion} the term inside the integral in \eqref{ecsda}  is  $ c/\eps_{n}$-Lipschitz (with  $c$ large enough) w.r.t. $x$, Proposition  \ref{Ledwewewmmaboundform_hro} gives that for all $s\in [t_{k}, t_{k+1}]$, with $k=0,\hdots, n-1$, we have
 $$ \left|\int_{\RR}    Dv^{n}(x,s)  D\phi(x)   \dd \left[ m^{n}(s)(x)-  m^{n}(t_{k})(x)\right]\right| \leq \frac{c}{\eps_{n}} \sqrt{s-t_k} \leq \frac{c\sqrt{h_{n}}}{\eps_{n}},$$
 which implies that, since $Dv^{n}(x,t_{k})=Dv^{n}(x, s)$ for all $s\in [t_k, t_{k+1}[$,  \small
\be\label{unaexp}
 \left| \int_{t_{k}}^{t_{k+1}}\int_{\RR}     Dv^{n}(x,s) D\phi(x)   \dd   m^{n}(s)(x) \dd s -  \int_{t_{k}}^{t_{k+1}}\int_{\RR}     Dv^{n}(x,t_k)  D\phi(x)    \dd   m^{n}(t_{k})(x) \dd s\right| \leq  \frac{ch_{n}^{\frac{3}{2}}}{\eps_{n}}.
\ee\normalsize
 Therefore, combining  \eqref{ecsda} and  \eqref{unaexp}, we obtain that \small
 $$\ba{rcl}  \int_{\RR}\phi(x) \dd  \left[m^{n}(t_{k+1})-m^{n}(t_{k})\right]  &=& -\int_{t_{k}}^{t_{k+1}}\int_{\RR}   Dv^{n}(x,s) D\phi(x)  \dd  m^{n}(s)(x) \dd s   \\[4pt]
 											       \ &  \ &+h_{n}\int_{\RR} \half \sigma^2(t_k) D^{2}\phi(x) \dd  m^{n}(t_k)(x)\\[4pt]
											       	\; & \; & +O\left(h_{n}^{2}+ h_n \frac{\rho_n^{2}}{\eps_n}+  \frac{h_{n}^{\frac{3}{2}}}{\eps_{n}}+ \rho_{n}^{2}\right).\ea $$\normalsize
 Thus, summing from $k=0$ to $k=n-1$ and using  \eqref{teofinec1} \small
 \be\label{antesdepasarallimite}\ba{rcl}  \int_{\RR} \phi(x) \dd m^{n}(t_{n})(x)&=& \int_{\RR} \phi(x)  m^{n}(x,0)-\int_{0}^{t_{n}}\int_{\RR}  Dv^{n}(x,s) D\phi(x)   \dd  m^{n}(s)(x) \ \dd s \\[6pt]
 									\       & \ & +h_{n}\sum_{k=0}^{n-1}\int_{\RR} \half \sigma^{2}(t_k) D^{2}\phi(x) \dd  m^{n}(t_k)(x) + O\left( h_{n}+
									
									\frac{\rho_{n}}{\eps_{n}}+ \frac{\sqrt{h}}{\eps_{n}}+ \frac{\rho_{n}^{2}}{h_{n}}\right), \\[6pt]
									&=& \int_{\RR} \phi(x)  m^{n}(x,0)-\int_{0}^{t_{n}}\int_{\RR}  Dv^{n}(x,s) D\phi(x)   \dd  m^{n}(s)(x) \ \dd s \\[6pt]
 									\       & \ & +h_{n}\sum_{k=0}^{n-1}\int_{\RR} \half \sigma^{2}(t_k) D^{2}\phi(x)  \dd  \bar{m}(t_k)(x) \\[6pt]
									\; & \; &+ O\left( \sup_{s\in [0,T]} d_{1}(m_{n}(s), \bar{m}(s))+  h_{n}+
									
									\frac{\rho_{n}}{\eps_{n}}+ \frac{\sqrt{h}}{\eps_{n}}+ \frac{\rho_{n}^{2}}{h_{n}}\right).\ea\ee \normalsize								
Since $ t \in [0,T] \to \int_{\RR} \sigma^{2}(t) D^{2}\phi(x)  \dd  \bar{m}(t)(x)$
is continuous (because $\sigma$ is continuous and $\bar{m}\in C([0,T]; \P_{1})$), we have that
\be\label{nuevolimite}\lim_{n\to \infty} h_{n}\sum_{k=0}^{n-1}\int_{\RR} \half \sigma^{2}(t_k) D^{2}\phi(x)  \dd  \bar{m}(t_k)(x) = \int_{0}^{t} \int_{\RR} \half \sigma^{2}(s) D^{2}\phi(x) \dd  \bar{m}(s)(x) \dd s.\ee
Moreover,  Proposition \ref{masmammpppqpqllls} implies that the density of  $m^n$ (still denoted by $m^n$) is bounded in $L^{\infty}\left(\RR \times [0,T] \right)$. Thus,    $\ov{m}$ is absolutely continuous and $m_n \to \ov{m}$ in  $L^{\infty}\left(\RR \times [0,T] \right)$-weak-$\ast$. 
On the other hand, using that $\phi\in C^{\infty}_{c}(\RR)$, that for all $t\in [0,T]$ the derivative $Dv[\ov{m}](x,t)$ exists for a.a. $x$ (by \eqref{qmemnndnnnn}),  Theorem \ref{covergencefullydiscreteconeps}   and   the Lebesgue theorem, we get that 
\be\label{ultimolimite}  \mathbb{I}_{[0,t_{n}]}   Dv^{n}(\cdot,\cdot)   D\phi(\cdot)    \to   \mathbb{I}_{[0,t]}   Dv[\ov{m}](\cdot,\cdot)  D\phi(\cdot)     \hspace{0.3cm} \mbox{strongly in $L^1(\RR^{d}\times [0,T])$},\ee
Thus, since $m^{n}$ converge to $\ov{m}$ in $L^{\infty}\left(\RR\times [0,T] \right)$-weak-$\ast$, using \eqref{nuevolimite}-\eqref{ultimolimite}, that $\rho_{n}= O(h_{n})$ and that $h_{n}=o(\eps_{n}^{2})$, we can pass to the limit in \eqref{antesdepasarallimite} to obtain \eqref{eqfinal}.  
%
\end{proof}
\begin{remark}\label{extensionahamiltonianosmasgenerales}
{\rm(i)} As the proof shows, the costly assumption $h_{n}=o(\eps_{n}^{2})$ comes from the a priori non regularity of $Dv[\bar{m}](x,t)$ w.r.t. the time variable. In fact,   an argument similar to the one  used for the convergence in \eqref{nuevolimite}  cannot be applied since a priori $Dv[\bar{m}](x,\cdot)$ is not necessarily Riemman integrable and hence \eqref{unaexp} seems to be necessary. \smallskip\\
{\rm(ii)}  All the results of this paper, can be extended for the more general Hamiltonians $H(x,t,p)$ considered in {\rm \cite{AchdouCamilliCorrias11}}. In fact, consider the system 
\small
\be\ba{rcl}\label{MFGestoconH} 
-\partial_{t} v  -  \half \mbox{{\rm tr}}\left( \sigma(t) \sigma(t)^{\top} D^{2} v \right)  + H(x,t, Dv)   &=& F(x, m(t)) \;  \;    \hbox{{\rm in} } \RR^{d}\times ]0,T[, \\[6pt]
\partial_{t} m -\half  \mbox{{\rm tr}}\left( \sigma(t) \sigma(t)^{\top} D^{2} v \right)   -\mbox{{\rm div}} \big( \partial_{p} H(x,t, Dv)  m  \big)  &=&0 \;  \; \; \hbox{{\rm in} } \RR^{d}\times ]0,T[, \\[6pt]
v(x,T)= G(x, m(T)) \; \;   \mbox{{\rm for} } x \in \RR^{d}, &\,& \; \; m(\cdot,0)=m_0(\cdot) \in \P_{1}(\RR^{d}).
\ea\ee \normalsize
If the assumptions in  {\rm\cite[Section 2]{AchdouCamilliCorrias11}} for the Hamiltonian $H(x,t,p)$ hold true and for every $\mu \in C([0,T]; \P_{1}(\RR^{d}))$ the  $(OSL_h^{\rho})$ condition in  {\rm\cite[page 16]{AchdouCamilliCorrias11}}  is verified for $-\partial_{p} H(x,t,D v^{\eps}_{\rho,h}[\mu])$ {\rm(}where $v^{\eps}_{\rho,h}[\mu]$ is the Semi-Lagrangian approximation of the viscosity solution $v[\mu]$ of the HJB equation in \eqref{MFGestoconH}, with $m$ replaced by $\mu${\rm)},  then the proofs of this article can be reproduced for this more general case. 
\end{remark}

\section{Numerical Tests}\label{numericaltests}
We present some numerical simulations for the one dimensional case. For an easier explanation of the tests, let us recall the  heuristic interpretation of the MFG system:  an  average player, whose dynamic is given by
$$
dX(s)=\alpha(s)ds +\sigma(s)dW(s),\;{\mbox {for all}}\;  t\in[0,T], \hspace{0.2cm} X(0)=x\in \RR,
$$
and $W(\cdot)$  a standard one dimensional Brownian motion,
aims to minimize, with respect to the control $\alpha(\cdot)$,  the functional :
$$\EE \left( \int_0^ T\left[ \frac{1}{2}\alpha^2(s)+F(X(s),m(s))\right] \dd s +G(X^{x,t}(T),m(T))\right). $$
We will consider running costs of the form 
$$\frac{1}{2}\alpha^2+F(x,m)=\frac{1}{2}\alpha^2+f(x)+ V_{\delta}(x,m),$$
where $f$ is $C^2$ and 
\begin{equation}\label{eq:V}
V_{\delta}(x,m)= \phi_{\delta} \ast \left[\phi_{\delta}  \ast m \right](x) \hspace{0.3cm} \mbox{and } \; \; \; \phi_{\delta}(x)=\frac{1}{\sqrt{2 \pi}} e^{-x^2/(2 \delta^2)}. \end{equation}
for some $\delta>0$ to be chosen later.
We solve heuristically the fully discrete MFG system  \eqref{eqoqeqqrrsdsrrrooaaaaaaaaaaa} by a fixed-point iteration method. At a generic  iteration $p$,
let  us call $$\{(v^{\eps,p}_{i,k}, m^{\eps,p}_{i,k}),\; i\in\ZZ, k=0,\dots N\}_{p\in \NN}$$  the sequences representing the approximated value function and  mass distribution. We consider as initial guess 
  $$m^{\eps,0}_{i,k}=m^{\eps}_{i,0}=\int_{E_i}m_0(x)\dd x,\quad i\in \mathbb{Z},\; k=0,\dots,N.$$
Given $m^{\eps,p}_{i,k}$ we calculate $m^{\eps,p+1}_{i,k}$ according to the following scheme
 $$m^{\eps,p}_{i,k}\longrightarrow v^{\eps,p}_{i,k}\longrightarrow Dv^{\eps,p}(x_i,t_k)\longrightarrow m^{\eps,p+1}_{i,k},$$
where in the step
 $m^{\eps,p}_{i,k}\longrightarrow v^{\eps,p}_{i,k}$ we compute $\{v^{\eps,p}_{i,k}\}_{i,k}$ by solving the scheme  \eqref{dwoewoeoweaap} with discrete mass distribution given by $\{m^{\eps,p}_{i,k}\}_{i,k} $. In  the step $v^{\eps,p}_{i,k}\longrightarrow Dv^{\eps,p}(x_i,t_k)$ we compute  the discrete gradient of  $v^{\eps,p}$ by  approximating \eqref{ammrmooroorotttpsps} using a discrete convolution 
and then approximating  the gradient by central finite differences. In the last step
 $Dv^{\eps}(x_i,t_k)\longrightarrow m^{\eps,p+1}_{i,k}$, we compute $m^{\eps,p+1}_{i,k}$  by the  scheme \eqref{eqoooaaaaaaaaaaa}. We stop the fixed point method when the  errors 
 \be\label{errors}E(v^{\eps,p}):= \|v^{\eps,p+1}-v^{\eps,p}\|_{\infty},\quad E(m^{\eps,p}):= \|m^{\eps,p+1} -m^{\eps,p}\|_{\infty},\ee
are below a given threshold $\tau$ or $p$  has reached a fixed number of iterations. \smallskip

 So far, we have set the problem in the space domain $Q=\RR$. Clearly to implement the numerical scheme we have to suppose that the domain $Q$ is bounded. Following  \cite[Section 3]{CamFal95}, we will thus formally  constraint the problem to a sufficiently large bounded domain $Q_b$ by supposing now that $\sigma=\xi_{b}^{2}(x) \sigma(t)$, where $\xi_{b}\in C^{\infty}_{0}(\RR)$ satisfies $\xi_{b}(x)=1$ if $x\in Q_{b}$.  Note that by doing this we are imposing a dependence on $x$ for $\sigma$ and our results do not apply.  Moreover, for the Fokker Planck equation, in order to maintain the mass $m$ in $Q_b$, we will impose Neumann boundary conditions, which are not covered by our results neither.  Therefore,  the numerical resolution of the scheme is heuristic. However,  since we will consider cost functions that incite the players to remain on a bounded domain, this type of approximation is reasonable since   the  influence in the cost, expressed through $V_{\delta}(x,m)$, of players being far from $Q_b$, is negligible.   
 
We will show three numerical tests, comparing the different behavior  at different choices for the diffusion term. First  we  consider the case in which the diffusion term is zero (studied already in \cite{CS12}), which corresponds to a deterministic MFG system, then the case with a constant and positive diffusion term, which corresponds to second order MFG system (see \cite{CS13}). Finally,  we consider the case where the  diffusion term is given by  a positive continuous function, which degenerates  in a given time interval.

{ \bf{Test 1 (deterministic case)}} We consider a numerical domain $Q_b\times [0,T]=[0,1]\times [0,2]$ and 
  we choose  as initial mass distribution: 
$$m_0(x)=\frac{\nu(x)}{\int_{\Omega}\nu(x)dx} \; \, \;  {\rm with} \; \;  \nu(x)= e^{-(x-0.5)^2/ (0.1)^2}.$$
We choose as final cost $G=0$,  as  running cost $\frac{1}{2}\alpha^2(t)+f(x)+ V_{\delta}(x,m(t))$ with $\delta=0.2$ and
$$f(x,t)=5(x-(1-\sin(2\pi t))/2)^2,$$
and  we set  $\sigma(\cdot)\equiv 0$. 
 In the running cost  the term  $f(x,t)$ incites  the agents to stay close to the point $(1-\sin(2\pi t))/2$ at each time $t$, while the term $V_{\delta}(x,m)$ penalizes  high concentration of the density distribution.
The density evolution is shown in Fig.\ref{Test3mass}, which has been computed with $\rho=3.12\cdot 10^{-3} , h=\rho,\eps=0.15$. The number of iterations required by the fixed point method to satisfy the stopping criteria with $\tau=10^{-3} $ is 10.
 We observe, during the whole time interval, that the mass density tends to concentrate around  to the curve $(1-\sin(2\pi t))/2$ and no diffusion effect appears. It is important to remark that the term $V_{\delta}(x,m)$ has a non negligible effect in the distribution. As a matter of fact, if this term is not present, then much higher concentrations  are observed (see e.g. \cite[Fig. 4.8]{CS12}).
 \begin{figure}[ht!]
\begin{center}
\includegraphics[width=6cm]{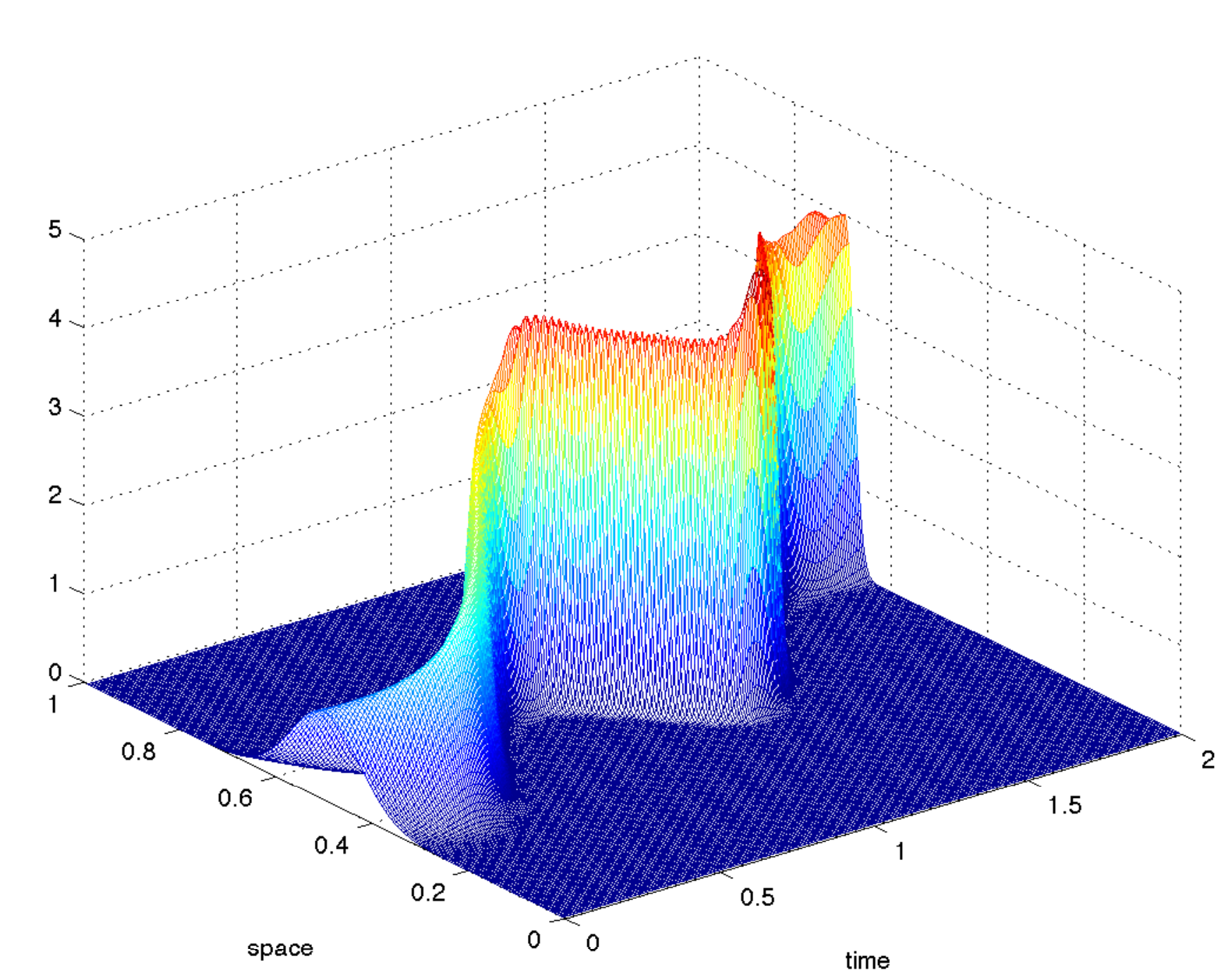}\includegraphics[width=6cm]{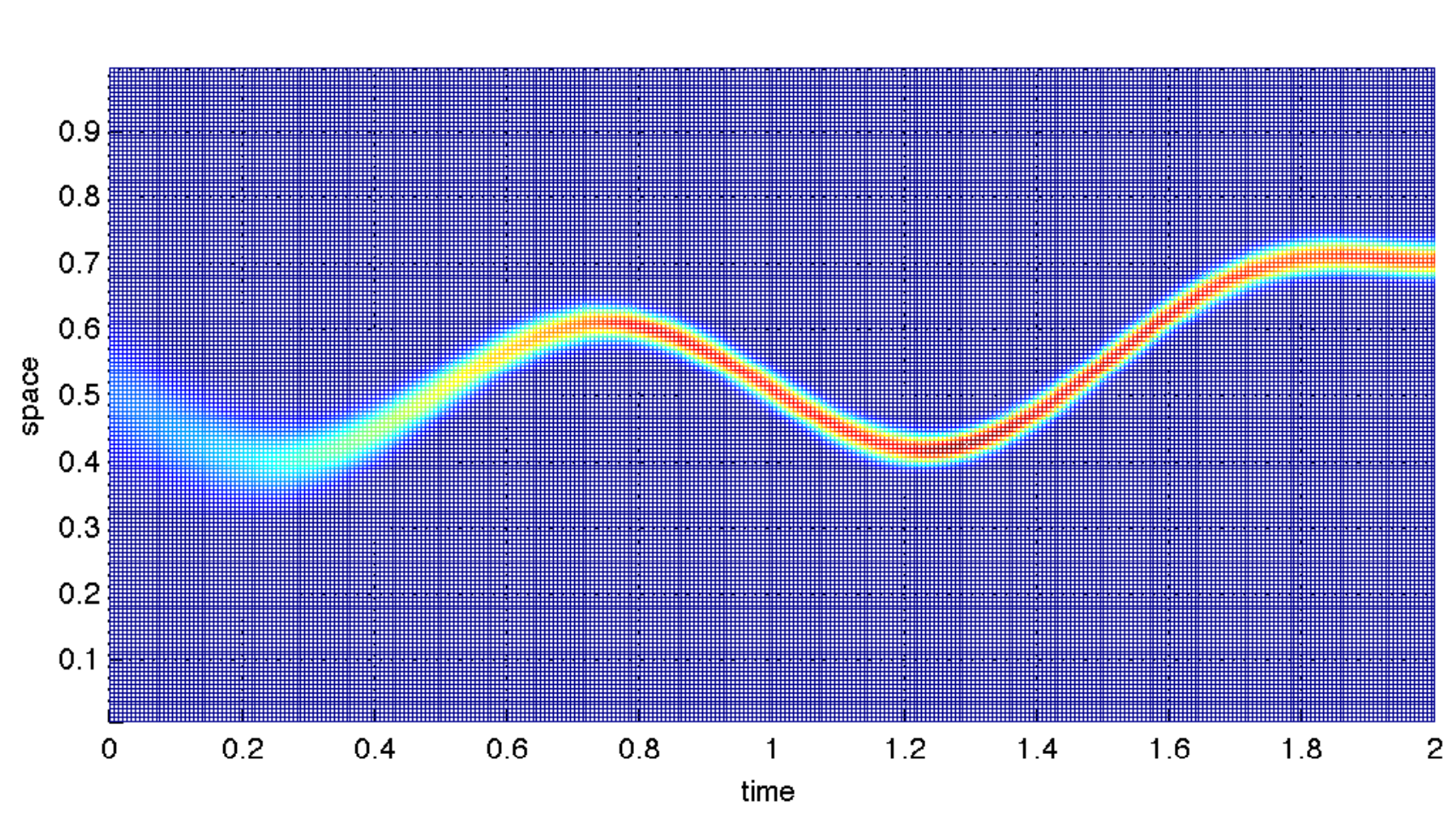}
\caption{{\bf Test 1: Mass evolution $m_{i,k}^\eps$}}
\label{Test3mass}
\end{center}
\end{figure}\\

{ \bf{Test 2 (non-degenerate diffusion)}} 
We consider the same problem as in Test 1, but now we change the diffusion term choosing $\sigma = 0.2$. Let us note that, in this case, the scheme reduce to the one proposed in \cite{CS13}.
The running cost and the initial distribution are chosen as in the previous tests.
The density evolution is shown in Fig.  \ref{Test2mass}, which has been computed with $\rho=6.35\cdot 10^{-3} , h=\rho,\eps=2 \sqrt{h} $ and $\tau=10^{-3} $. The number of iterations for the fixed point method, to satisfy the stopping criteria with $\tau=10^{-3} $, is 6. Let us note that in this case the convergence  is faster compared to the deterministic case in Test 1.
A diffusive effect is observed during the whole time interval, which seems not very strong, since it is opposite to the one due to the running cost, which tends to concentrate the mass density around the sinusoidal curve.\\
 \begin{figure}[ht!]
\begin{center}
\includegraphics[width=6cm]{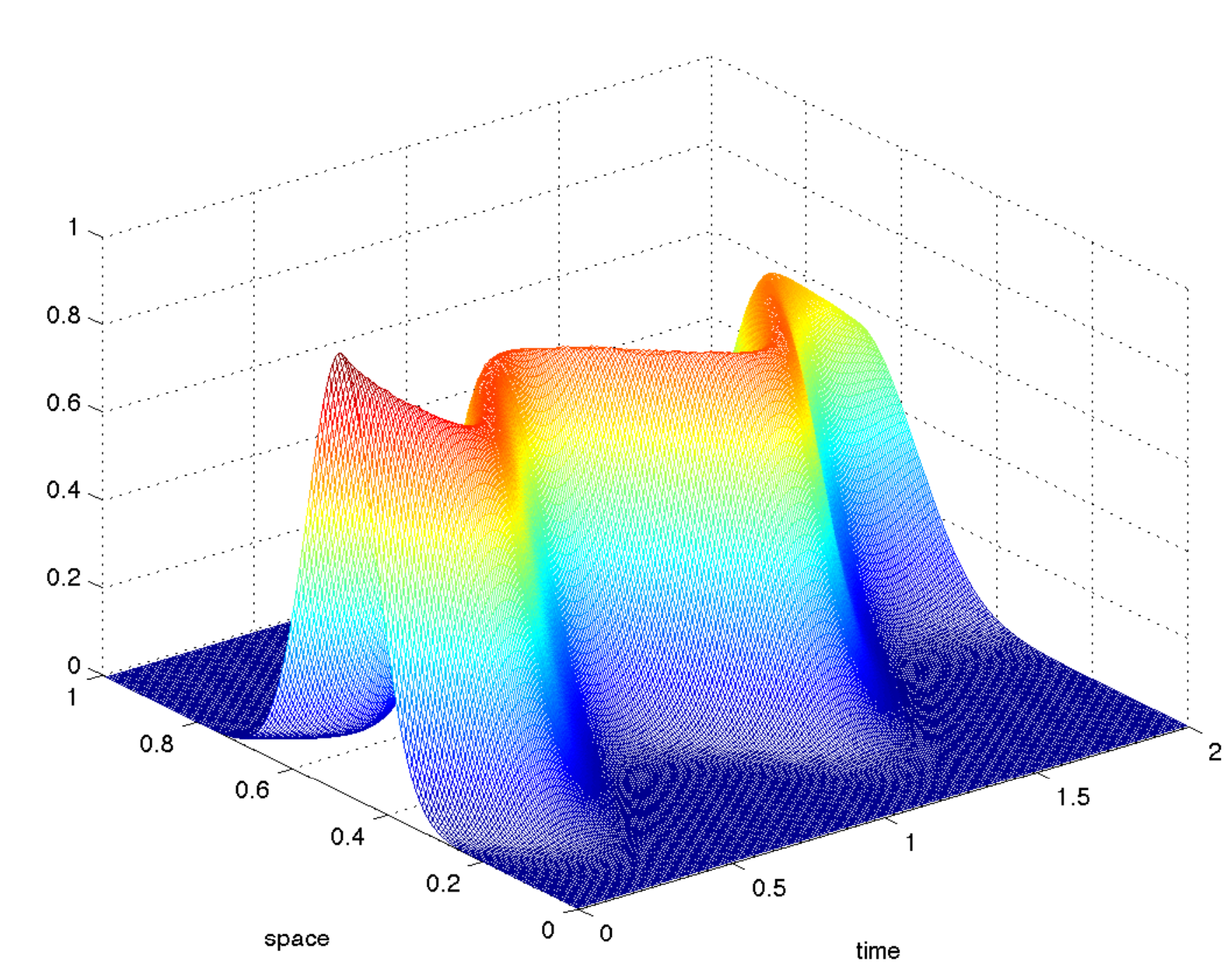}\includegraphics[width=6cm]{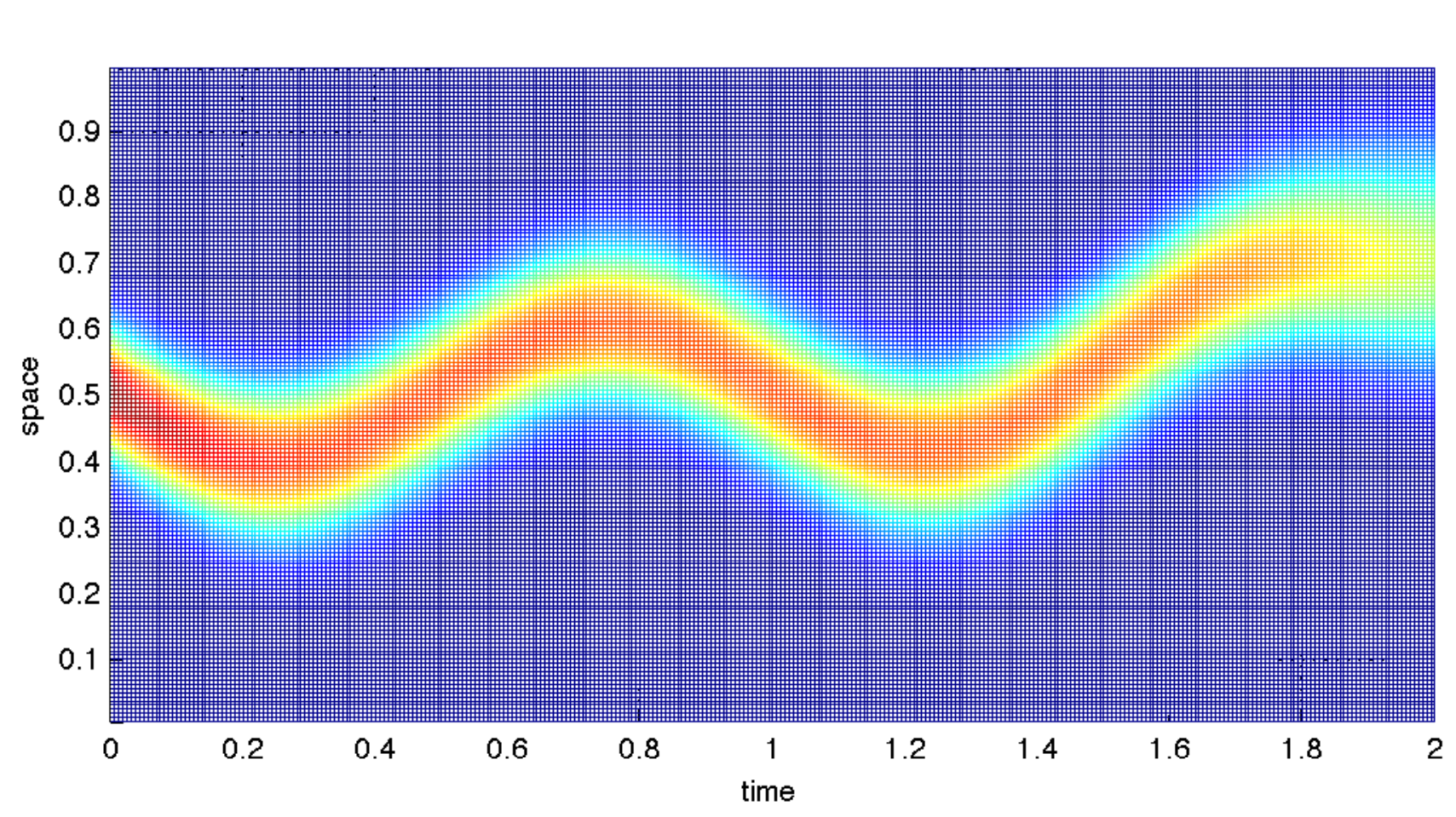}
\caption{{\bf Test 2: Mass evolution $m_{i,k}^\eps$ }}
\label{Test2mass}
\end{center}
\end{figure}\\

{ \bf{Test 3 (degenerate diffusion)}} 
We consider the same problem as in Test 1, but now we change the diffusion term choosing 
a  scalar function 
$$\sigma(t)=\max(0,0.2-|t-1|).$$
Note that $\sigma(t)=0$  for all $t\in [0,0.8]\cup[1.2,2]$.
The running cost and the initial distribution are chosen as in the previous tests.
 The density evolution is shown in Fig.  \ref{Test1mass}, which has been computed with $\rho=6.35\cdot 10^{-3} , h=\rho,\eps=2 \sqrt{h} $ and $\tau=10^{-3} $.  The number of iterations, for the fixed point method to satisfy the stopping criteria with $\tau=10^{-3} $, is 9. Let us note that in this case the rate of convergence, for the fixed point method, is  between the rates for the  two cases.
 We observe a diffusive effect  during the time interval $[0.8,1.2]$, due to the non zero term  $\sigma(t)$. When the diffusion stops to act, a time $t=1.2$ the density starts again to concentrate faster around the curve where $f$ is lower. 
\begin{figure}[ht!]
\begin{center}
\includegraphics[width=6cm]{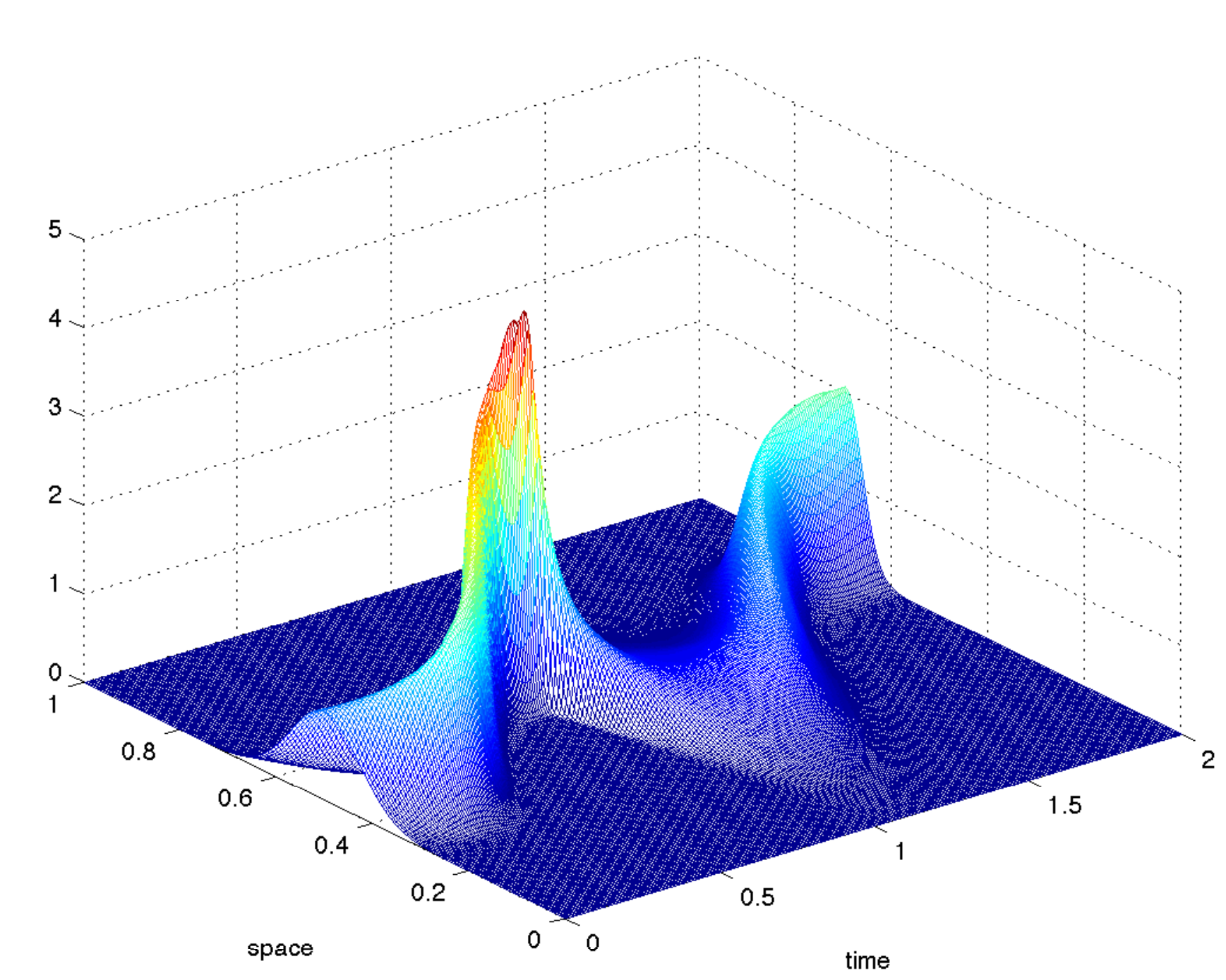}\includegraphics[width=6cm]{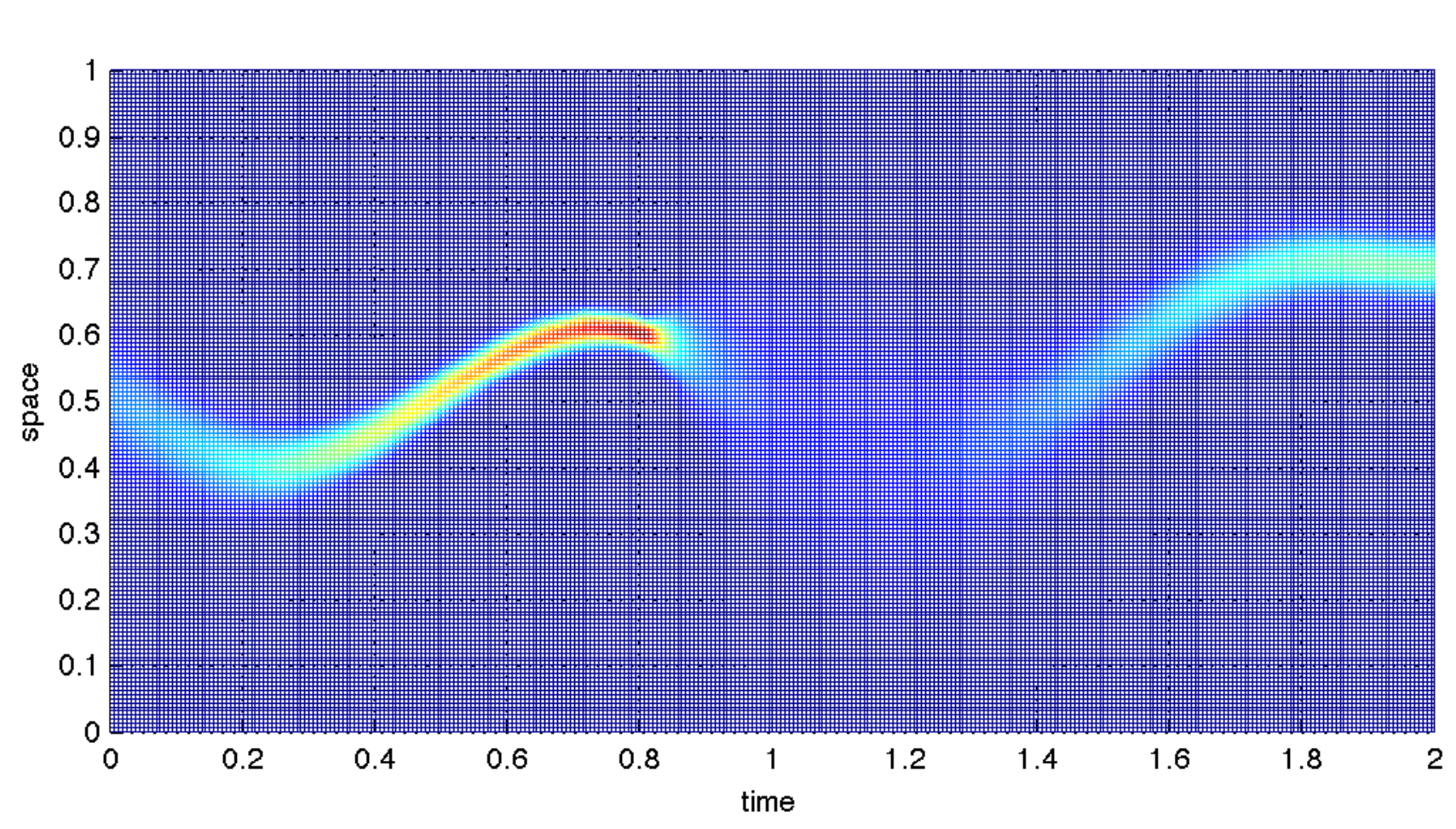}
\caption{{\bf Test 1: Mass evolution $m_{i,k}^\eps$}}
\label{Test1mass}
\end{center}
\end{figure}\\
Table \ref{tab:test1SL} shows  the errors \eqref{errors} computed  varying  all the parameters $(\rho,h,\eps)$, according the balance $h=\rho$ and $\eps=2 \sqrt{h}  $.  In the first two columns of Table \ref{tab:test1SL} we show the space and regularizing parameters, in the last two columns the errors for the value function and the density computed after 10 iterations of the fixed point algorithm.\\
\begin{table}[ht!!]\caption{Parameters and errors }
\begin{center}\label{tab:test1SL}
\begin{tabular}{|c|c|c|c|c|}\hline
$\rho$ & $\varepsilon $ &$E(v^{\eps,10})$& $E(m^{\eps,10})$ \\
\hline \hline
$1.25\cdot 10^{-2}$& $0.2$& $1.72 \cdot 10^{-6} $& $ 9.52\cdot 10^{-5} $\\ \hline 
$6.25\cdot 10^{-3}$& $0.15$ & $ 1.08\cdot 10^{-6} $&  $ 1.17 \cdot 10^{-4} $ \\ \hline 
$3.12\cdot 10^{-3}$& $0.1 $& $1.82\cdot 10^{-6} $ & $3.26\cdot 10^{-4} $\\ \hline  
\end{tabular}\\[30pt]
\end{center}
\end{table}

In Fig. \ref{Test1parvar}, we show the behavior of the errors \eqref{errors} in logarithmic scale on the $y$-axes versus the number of fixed-point iterations on the $x$-axes. We vary all the parameters according to the Table \ref{tab:test1SL}.

\begin{figure}[ht!]
\begin{center}
\includegraphics[width=5cm]{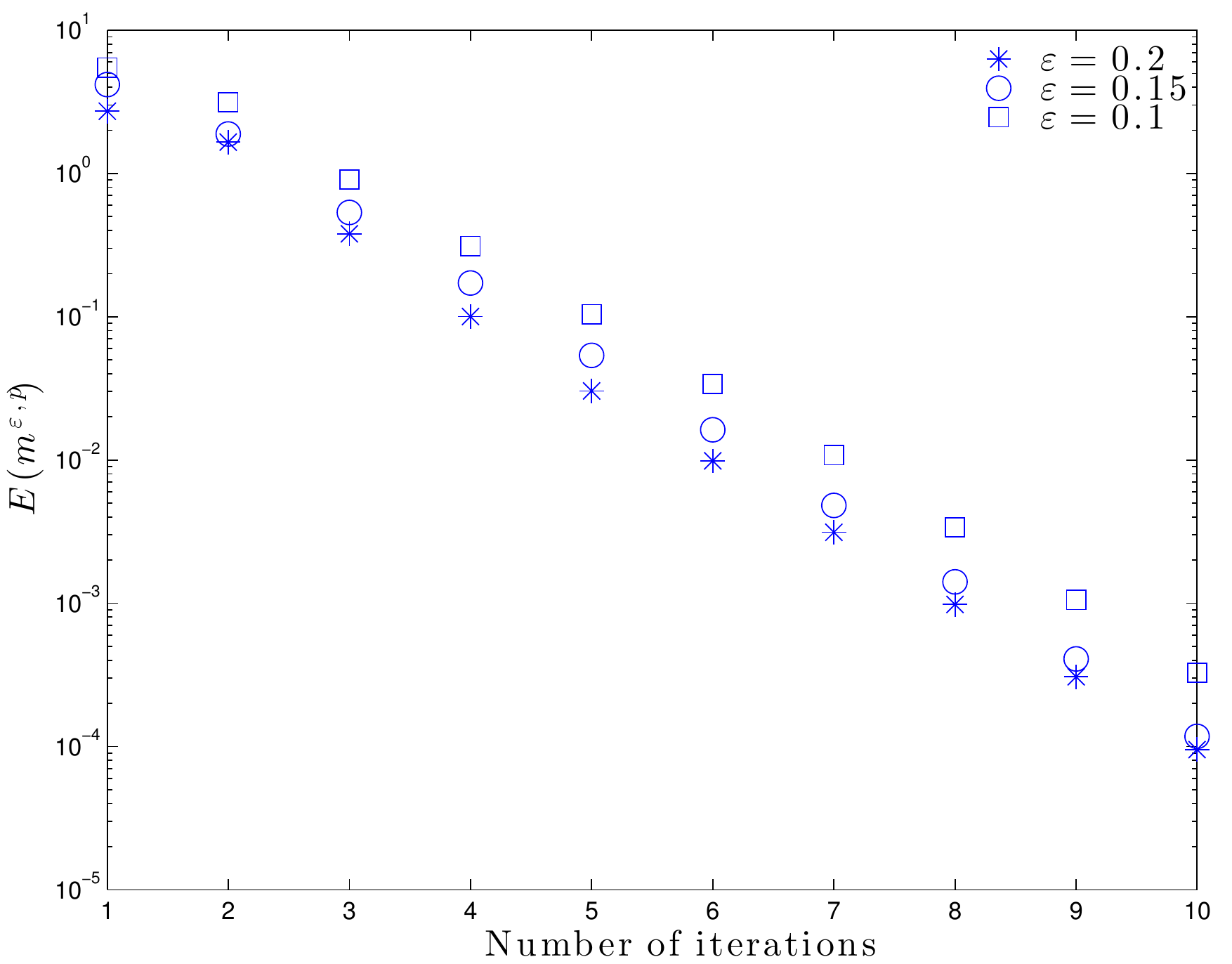}\includegraphics[width=5cm]{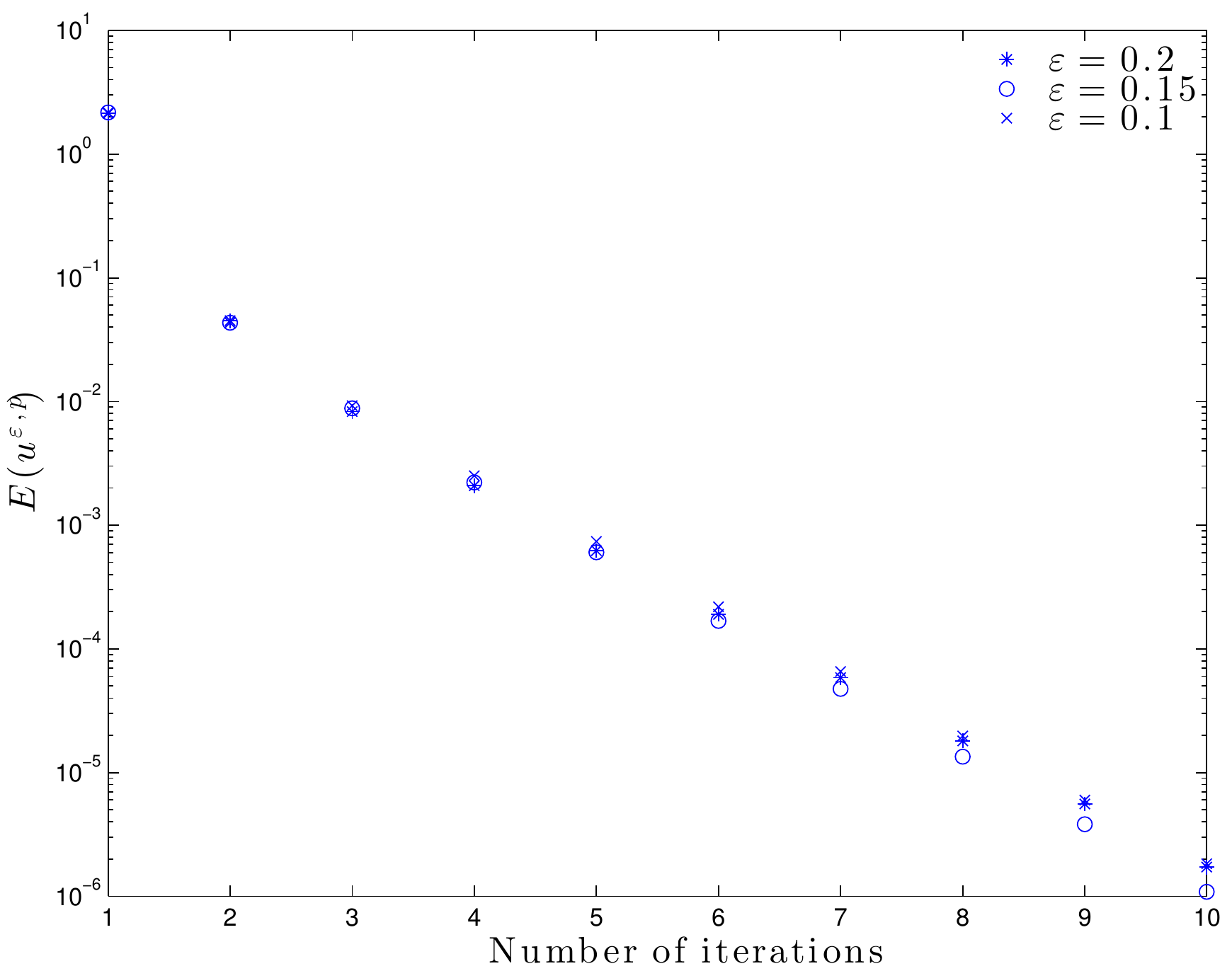}
\caption{ {\bf Errors}: $E(m^{\eps,p})$ (left) $E(u^{\eps,p})$ (right) varying all the parameters $(\eps,\rho,h)$ according to  Table \ref{tab:test1SL}.\label{Test1parvar}}
\end{center}
\end{figure}

 \bibliographystyle{plain}
\bibliography{bibMFG}

\begin{thebibliography}{10}

\bibitem{AchdouCamilliCorrias11}
Y.~Achdou, F.~Camilli, and L.~Corrias.
\newblock On numerical approximations of the {H}amilton-{J}acobi-transport
  system arising in high frequency.
\newblock {\em Discrete and Continuous Dynamical Systems- Series B},
  19(2):629--650, 2014.

\bibitem{AchdouCapuzzoCamilli12}
Y.~Achdou, F.~Camilli, and I.~Capuzzo Dolcetta.
\newblock Mean field games: convergence of a finite difference method.
\newblock {\em SIAM J. Numer. Anal.}, 51(5):2585--2612, 2013.

\bibitem{AchdouCapuzzo10}
Y.~Achdou and I.~Capuzzo Dolcetta.
\newblock Mean field games: {N}umerical methods.
\newblock {\em {SIAM J}ournal of {N}umerical {A}nalysis}, 48-3:1136--1162,
  2010.

\bibitem{Ambrosiogiglisav}
L.~Ambrosio, N.~Gigli, and G.~Savar\'e.
\newblock {\em Gradient flows in metric spaces and in the space of probability
  measures.}
\newblock Second edition. {L}ecture notes in {M}athematics {E}{T}{H}
  Z{\"u}rich. {B}irkh{\"a}user Verlag, Bassel, 2008.

\bibitem{BS91}
G.~Barles and P.E. Souganidis.
\newblock Convergence of approximation schemes for fully nonlinear second order
  equations.
\newblock {\em Asymptotic Anal.}, 4(3):271--283, 1991.

\bibitem{doi:10.1137/090752328}
B.~Bouchard and N.~Touzi.
\newblock Weak dynamic programming principle for viscosity solutions.
\newblock {\em SIAM Journal on Control and Optimization}, 49(3):948--962, 2011.

\bibitem{Breiman}
L.~Breiman.
\newblock {\em Probability}.
\newblock Addison-Wesley Publishing Company, Reading, MA, 1968.

\bibitem{CamFal95}
F.~Camilli and M.~Falcone.
\newblock An approximation scheme for the optimal control of diffusion
  processes.
\newblock {\em RAIRO Mod\'el. Math. Anal. Num\'er.}, 29(1):97--122, 1995.

\bibitem{camsilva12}
F.~Camilli and F.~J. Silva.
\newblock A semi-discrete in time approximation for a first order-finite mean
  field game problem.
\newblock {\em Network and Heterogeneous Media}, 7-2:263--277, 2012.

\bibitem{Cardialaguet10}
P.~Cardaliaguet.
\newblock Notes on {M}ean {F}ield {G}ames: from {P.-L.} {L}ions' lectures at
  {C}oll\`ege de {F}rance.
\newblock {\em Lecture {N}otes given at {T}or {V}ergata}, 2010.

\bibitem{CS13}
E.~Carlini and F.~J. Silva.
\newblock Semi-lagrangian schemes for mean field game models.
\newblock In {\em Decision and Control (CDC), 2013 IEEE 52nd Annual Conference
  on}, pages 3115--3120, Dec 2013.

\bibitem{CS12}
E.~Carlini and F.~J. Silva.
\newblock A fully discrete semi-lagrangian scheme for a first order mean field
  game problem.
\newblock {\em SIAM Journal on Numerical Analysis}, 52(1):45--67, 2014.

\bibitem{Ciarlet}
P.~G. Ciarlet and J.-L. Lions, editors.
\newblock {\em Handbook of numerical analysis. {V}ol. {II}}.
\newblock Handbook of Numerical Analysis, II. North-Holland, Amsterdam, 1991.
\newblock Finite element methods. Part 1.

\bibitem{CraIshLio92}
M.G. Crandall, H.~Ishii, and P.-L. Lions.
\newblock User's guide to viscosity solutions of second order partial
  differential equations.
\newblock {\em Bull. American Mathematical Society (New Series)}, 27:1--67,
  1992.

\bibitem{doi:10.1137/S0363012904440897}
F.~Da~Lio and O.~Ley.
\newblock Uniqueness results for second-order bellman--isaacs equations under
  quadratic growth assumptions and applications.
\newblock {\em SIAM Journal on Control and Optimization}, 45(1):74--106, 2006.

\bibitem{DebrabantJakobsen13}
K.~Debrabant and E.~R. Jakobsen.
\newblock Semi-{L}agrangian schemes for linear and fully non-linear diffusion
  equations.
\newblock {\em Math. Comp.}, 82(283):1433--1462, 2013.

\bibitem{Figalli08}
A.~Figalli.
\newblock Existence and uniqueness of martingale solutions for sdes with rough
  or degenerate coefficients.
\newblock {\em J. Funct. Anal.}, 253:109--153, 2008.

\bibitem{FleSon92}
W.H. Fleming and H.M. Soner.
\newblock {\em Controlled Markov processes and viscosity solutions}.
\newblock Springer, New York, 1993.

\bibitem{gomessurvey13}
D.~Gomes and J.~Sa\'ude.
\newblock Mean {F}ield {M}odels, {A} {B}rief {S}urvey.
\newblock {\em Dynamic Games and Applications}, pages 1--45, 2013.

\bibitem{gueant10}
O.~Gu\'eant.
\newblock Mean field games equations with quadratic hamiltonian: a specific
  approach.
\newblock {\em Mathematical Models and Methods in Applied Sciences}, 22, 2012.

\bibitem{GLL10}
O.~Gu{\'e}ant, J.-M. Lasry, and P.-L Lions.
\newblock Mean field games and applications.
\newblock In {\em Paris-{P}rinceton {L}ectures on {M}athematical {F}inance
  2010}, volume 2003 of {\em Lecture Notes in Math.}, pages 205--266. Springer,
  Berlin, 2011.

\bibitem{HuangCainesMalhame03}
M.~Huang, P.E. Caines, and R.P. Malham\'e.
\newblock Individual and mass behavior in large population stochastic wireless
  power control problems: centralized and {N}ash equillibrium solutions.
\newblock {\em Proc. 42nd IEEE-CDC}, 2003.

\bibitem{HuangCainesMalhame06}
M.~Huang, P.E. Caines, and R.P. Malham\'e.
\newblock Large populations stochastic dynamics games: closed-loop
  {M}c{K}ean-{V}lasov systems and the {N}ash certainly equivalence principle.
\newblock {\em Comm. Inf. Syst.}, 6:221–--251, 2006.

\bibitem{aime10}
A.~Lachapelle, J.~Salomon, and G.~Turinici.
\newblock Computation of mean field equilibria in economics.
\newblock {\em Mathematical Models and Methods in Applied Sciences},
  20-4:567–--588, 2010.

\bibitem{LasryLions06i}
J.-M. Lasry and P.-L. Lions.
\newblock Jeux \`a champ moyen {I}. {L}e cas stationnaire.
\newblock {\em C. R. Math. Acad. Sci. Paris}, 343:619--625, 2006.

\bibitem{LasryLions06ii}
J.-M. Lasry and P.-L. Lions.
\newblock Jeux \`a champ moyen {II}. {H}orizon fini et contr\^ole optimal.
\newblock {\em C. R. Math. Acad. Sci. Paris}, 343:679--684, 2006.

\bibitem{LasryLions07}
J.-M. Lasry and P.-L. Lions.
\newblock Mean field games.
\newblock {\em Jpn. J. Math.}, 2:229--260, 2007.

\bibitem{quartesaccosaleri07}
A.~Quarteroni, R.~Sacco, and F.~Saleri.
\newblock {\em Numerical Mathematics (Second Ed.)}.
\newblock Springer, Berlin, 2007.

\bibitem{Villani03}
C.~Villani.
\newblock {\em Topics in {O}ptimal {T}ransportation}.
\newblock Vol. 58 of {G}raduate {S}tudies in {M}athematics. American
  {M}athematical {S}ociety, Providence, RI, 2003.

\bibitem{YongZhou}
J.~Yong and X.Y. Zhou.
\newblock {\em Stochastic controls, Hamiltonian systems and HJB equations}.
\newblock Springer-Verlag, New York, Berlin, 2000.

\end{thebibliography}
\end{document}